\documentclass[12pt]{article}
\usepackage[a4paper, total={7in, 9.5in}]{geometry}
\usepackage{amsmath,amsfonts,amssymb,amsthm}
\usepackage{color}
\usepackage{caption}
\usepackage{authblk}
\usepackage{graphicx}
\usepackage{hyperref}
\usepackage{ragged2e}

\newtheorem{definition}{Definition}[section]
\newtheorem{theorem}{Theorem}[section]

\usepackage[titletoc,title]{appendix}
\usepackage{booktabs}

\usepackage{caption}
\usepackage{subcaption}
\newtheorem{thm}{Theorem}[section]
\newtheorem{lemma}[thm]{Lemma}

\newcounter{example}[section]
\newenvironment{example}[1][]{\refstepcounter{example}\par\medskip
	\textbf{Test Problem~\theexample. #1} \rmfamily}{\medskip}
\providecommand{\keywords}[1]
{
	\small	
	\textbf{\textit{Keywords---}} #1
}
\title{Entropy stable discontinuous Galerkin methods for ten-moment Gaussian closure equations}
\date{}
\author[$\dagger$]{Biswarup Biswas}
\author[$\dagger$]{Harish Kumar}
\author[$\dagger$]{Anshu Yadav}
\affil[$\dagger$]{Department of Mathematics, Indian Institute of Technology Delhi, New Delhi, India-110016}
\date{}

\begin{document}
	
	\maketitle
\begin{abstract}
 In this article, we propose high order discontinuous Galerkin entropy stable schemes for ten-moment Gaussian closure equations, which is based on the suitable quadrature rules (see \cite{chen2017entropy}). The key components of the proposed method are the use of an entropy conservative numerical flux \cite{sen2018entropy} in each cell and a suitable entropy stable numerical flux at the cell edges. This is then used in the entropy stable DG framework of \cite{chen2017entropy} to obtain entropy stability of the semi-discrete scheme. We also extend these schemes to a source term that models plasma laser interaction. For the time discretization, we use strong stability preserving schemes. The proposed schemes are then tested on several test cases to demonstrate stability, accuracy, and robustness.
\end{abstract}
	\keywords{Discontinuous Galerkin scheme, entropy stability, high-order accurate scheme, balance laws}
\section{Introduction}
	
    In several fluid and plasma flow applications assumptions of the {\em local thermodynamic  equilibrium} do not hold (see \cite{dubroca_magnetic_2004,miller_multi-species_2016,brown_numerical_nodate,meng_classical_2012,hirabayashi_new_2016,meena_robust_2019}). Due to this, the simulations of such flows using the scalar description of the pressure (used most commonly in Euler equations of compressible fluid flows) is not accurate and instead a tonsorial pressure (and hence temperature) is needed. One such model is ten-moment Gaussian closure equations (see \cite{berthon_numerical_2006,berthon2015entropy,meena2017positivity,sen2018entropy,meena2018well,meena_robust_2019,Levermore1996,levermore_gaussian_1998}). It is one of the simplest fluid model which consider pressure as tensor.

The system of the ten-moment Gaussian closure equations is a nonlinear system of hyperbolic conservation laws \cite{levermore_gaussian_1998}. So, the solutions of the corresponding Cauchy problem may contain discontinuities, even with smooth initial data. Hence, weak solutions are considered. Furthermore, to rule out the physically irrelevant solutions, an additional criterion in the form of entropy stability is imposed. Due to the presence of the nonlinear flux, the theoretical existence of the solutions is highly unlikely for most of the problems. Hence computational methods are used for most of the applications. Numerical methods for hyperbolic PDEs are often based on the finite volume methods \cite{LeVeque1992}, where higher-order accuracy is achieved using TVD, ENO, or WENO based reconstruction process. Another prevalent method is discontinuous Galerkin (DG) schemes, first developed by Reed and Hill in \cite{reed1973triangular} in the context of neutron transport problems. They were generalized for the time-dependent hyperbolic problems in \cite{Chavent1989} and improved in \cite{Cockburn1989,COCKBURN1988}. These schemes show significant improvement in accuracy in comparison to the finite volume schemes of the equivalent order. 

One of the most important theoretical estimates for the solutions of hyperbolic conservation laws is entropy stability. For the discrete solutions to be physically relevant, we need to ensure that they satisfy a discrete entropy inequality. However, this is highly nontrivial for the higher-order schemes. In \cite{Fjordholm2012}, authors have developed entropy stable higher-order numerical schemes. These are further extended to TVD \cite{Dubey2018,Biswas2018} and WENO schemes\cite{Fjordholm2016}. Entropy stable DG schemes were developed for Shallow water equations in \cite{Gassner2016}. More recently, a general framework of constructing entropy stable DG scheme for hyperbolic conservation laws is proposed by Chen and Shu in \cite{chen2017entropy} and applied to Euler's equations. This framework is also applied to several other interesting hyperbolic conservation laws \cite{Liu2018,duan2019high,biswas2019entropy}. 

For the Ten-Moment equations, several numerical schemes have been developed. In \cite{berthon_numerical_2006,berthon2015entropy} Berthon has developed first-order entropy stable and first-order positivity preserving schemes. The positivity preserving schemes are then extended to higher-order finite volume, DG, and WENO schemes by Meena et al. in \cite{meena2017positivity,meena_robust_2019,meena_positivity-preserving_2020}. Furthermore, in \cite{meena2018well}, authors have proposed a well-balanced scheme for a potential type source terms. For the entropy stability of schemes, authors in \cite{sen2018entropy} developed higher-order entropy stable finite difference schemes.

In this work, we design higher-order entropy stable DG schemes for ten-moment Gaussian closure equations. We proceed as follows:
\begin{itemize}
	\item Following \cite{sen2018entropy}, we first present the entropy framework for the ten-moment Gaussian closure equations.
	\item We then discretize the equations using the suitable quadrature rules presented in \cite{chen2017entropy}. 
	\item To achieve the entropy stability of the scheme, we use entropy conservative numerical flux from \cite{sen2018entropy} and a suitable entropy stable flux at the cell edges. 
	\item The scheme is then extended to include the potential type source terms.
\end{itemize}

The rest of the article is organized as follows: In Section \ref{sec:governing_eqs}, we present the ten-moment Gaussian closure equations and related entropy framework. In Section \ref{sec:1dscheme} we presents the one-dimensional scheme. We prove the accuracy, consistency, and the entropy stability of the schemes. These schemes are then extended to the two-dimensional case in Section \ref{sec:2dscheme}. Extensive numerical results for one and two-dimensional test cases are presented in Section \ref{sec:num_results}.  
	
\section{Ten-moment Gaussian closure equations}\label{sec:governing_eqs}

Following \cite{berthon2015entropy,sen2018entropy,meena2017positivity}, we consider the following two-dimensional ten-moment Gaussian closure model with source terms:
\begin{equation}
\label{ConLaw}
\partial_t \mathbf{u}+\partial_x \mathbf{f}(\mathbf{u})+\partial_y \mathbf{g}(\mathbf{u})=\mathbf{s}^x(\mathbf{u})+\mathbf{s}^y(\mathbf{u}),
\end{equation}
where $\mathbf{u}=\left(\rho, \rho \vec{v},\mathbf{E}\right)^T$ is the vector of the conservative variables. Here $\rho$ is the fluid density, vector $\vec{v}=\left(v^x,v^y\right)^T$ is the fluid velocity and symmetric tensor $\mathbf{E}=\left(E^{xx},E^{xy},E^{yy}\right)^T$ is the energy tensor. The fluxes $\mathbf{f}$ and $\mathbf{g}$ are given by,
\begin{equation*}
\mathbf{f}(\mathbf{u})=\begin{pmatrix}
\rho v^x\\
\rho \left( v^x\right)^2+p^{xx}\\
\rho v^x v^y+p^{xy}\\
\rho {v^x}^3+3v^x p^{xx}\\
\rho \left( v^x\right)^2 v^y+2 v^x p^{xy}+v^{y}p^{xx}\\
\rho v^x \left( v^y\right)^2+v^x p^{yy}+2v^{y}p^{xy}
\end{pmatrix}
\text{, and }
\mathbf{g}(\mathbf{u})=\begin{pmatrix}
\rho v^y\\
\rho v^x v^y+p^{xy}\\
\rho \left( v^y\right)^2+p^{yy}\\
\rho {v^y}\left( v^x\right)^2+v^y p^{xx}+2 v^x p^{xy}\\
\rho \left( v^y\right)^2 v^x+2 v^y p^{xy}+v^{x}p^{yy}\\
\rho \left( v^y\right)^3+3v^{y}p^{yy}
\end{pmatrix}.
\end{equation*}
The source terms in \eqref{ConLaw} are given by,
\begin{equation*}
\mathbf{s}_x(\mathbf{u})=\begin{pmatrix}
0\\
-\dfrac{1}{2}\rho \partial_x W\\
0\\
-\rho v^x \partial_x W\\
-\dfrac{1}{2}\rho v^y \partial_x W\\
0
\end{pmatrix}
\text{, and }
\mathbf{s}_y(\mathbf{u})=\begin{pmatrix}
0\\
0\\
-\dfrac{1}{2}\rho \partial_y W\\
0\\
-\dfrac{1}{2}\rho v^x \partial_y W\\
-\rho v^y \partial_y W
\end{pmatrix},
\end{equation*}
where $W(x,y,t)$ is the given function, which models electron quiver energy in the laser. The system \eqref{ConLaw} is closed by the using equation of state expression,
\begin{equation}
\mathbf{E}=\rho \vec{v}\otimes \vec{v}+\mathbf{p},
\end{equation}
where $\mathbf{p}=\left(p^{xx},p^{xy},p^{yy}\right)^T$ is the symmetric pressure tensor.  For the solutions to be physically admissible, we need density and symmetric pressure tensor to be positive. Hence, we consider the following set $\Omega$ of physically admissible weak solutions:
\begin{equation}
\Omega=\left\{\mathbf{u}\in\mathbb{R}^6\;\;|\,\rho>0,\, \mathbf{x}^T \mathbf{p} \mathbf{x}>0, \;\;\, \forall\, \mathbf{x}\in \mathbb{R}^2 \text{ with } \mathbf{x}\neq \mathbf{0}\right\}.
\end{equation}
For solutions, $\mathbf{u}\in\Omega$, we have the following  results from \cite{levermore_gaussian_1998,berthon_numerical_2006}:

\begin{lemma} The system \eqref{ConLaw}  is hyperbolic for $\mathbf{u}\in\Omega$. Furthermore,  the eigenvalues are given as,
	$$
	\vec{v}.{\bf n}, \vec{v}.{\bf n} \pm \sqrt{\frac{3({\bf p}.{\bf n}).{\bf n}}{\rho}}, \vec{v}.{\bf n} \pm \sqrt{\frac{({\bf p}.{\bf n}).{\bf n}}{\rho}}, 
	$$
	along the unitary vector ${\bf n}$. The multiplicity of the eigenvalue $\vec{ v}.{\bf n}$ is two and multiplicity of all other eigenvalues have is one. In addition, the eigenvalue $\vec{v}.{\bf n}$ is associated to a linearly degenerate field. The eigenvalues $\vec{v}.{\bf n} \pm \sqrt{\frac{3({\bf p}.{\bf n}).{\bf n}}{\rho}}$ are associated to a genuinely nonlinear field while eigenvalues $\vec{v}.{\bf n} \pm \sqrt{\frac{({\bf p}.{\bf n}).{\bf n}}{\rho}}$ are associated to a linearly degenerate field.
\end{lemma}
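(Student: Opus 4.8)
The plan is to reduce the spectral question to the $x$-direction flux Jacobian, factor its characteristic polynomial using a closed subsystem, check diagonalizability at the repeated eigenvalue, and then classify each characteristic field via $\nabla\lambda\cdot\mathbf r$. First I would use the rotational invariance of the ten-moment model: writing $A(\mathbf u;\mathbf n)=n_x\,\partial_{\mathbf u}\mathbf f(\mathbf u)+n_y\,\partial_{\mathbf u}\mathbf g(\mathbf u)$, there is an orthogonal matrix $\mathcal O(\mathbf n)$ on $\mathbb R^6$ — the identity on the $\rho$ component, the planar rotation taking $e_1$ to $\mathbf n$ on the momentum components, and the induced action $\mathbf E\mapsto Q\mathbf EQ^T$ on the symmetric energy tensor — such that $A(\mathbf u;\mathbf n)=\mathcal O(\mathbf n)^{-1}A(\mathcal O(\mathbf n)\mathbf u;e_1)\,\mathcal O(\mathbf n)$. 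This identity has to be checked against the explicit $\mathbf f,\mathbf g$; alternatively one expands $\det(A(\mathbf u;\mathbf n)-\lambda I)$ directly at the cost of heavier algebra. Either way it suffices to analyze $A_1:=\partial_{\mathbf u}\mathbf f$, and the general formulas follow on replacing $v^x$ by $\vec v\cdot\mathbf n$ and $p^{xx}$ by $(\mathbf p\cdot\mathbf n)\cdot\mathbf n$; Galilean invariance may further be used to assume $\vec v=\mathbf 0$ at the state considered, which only shifts every eigenvalue by $\vec v\cdot\mathbf n$.

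The key structural observation is that the conservative triple $(\rho,\rho v^x,E^{xx})$ is a closed subsystem of the $x$-direction system — indeed its flux, namely $(\rho v^x,\ E^{xx},\ \rho(v^x)^3+3v^xp^{xx})$, is a function of these three variables alone — so with the ordering $(\rho,\rho v^x,E^{xx},\rho v^y,E^{xy},E^{yy})$ the Jacobian $A_1$ is block lower triangular, $A_1=\begin{pmatrix}B&0\\ \ast&D\end{pmatrix}$. Here $B$ is the flux Jacobian of the one-dimensional Euler equations with adiabatic exponent $\gamma=3$ (up to a harmless rescaling of the energy), with eigenvalues $v^x,\ v^x\pm\sqrt{3p^{xx}/\rho}$, while a short computation gives $\det(D-\lambda I)=(v^x-\lambda)\big((v^x-\lambda)^2-p^{xx}/\rho\big)$; hence
\[
\det(A_1-\lambda I)=(v^x-\lambda)^2\Big((v^x-\lambda)^2-\tfrac{p^{xx}}{\rho}\Big)\Big((v^x-\lambda)^2-\tfrac{3p^{xx}}{\rho}\Big),
\]
which is exactly the claimed set of speeds with the stated multiplicities. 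On $\Omega$ one has $p^{xx}=e_1^T\mathbf p\,e_1>0$, so all six roots are real and the four outer speeds are simple and mutually distinct and distinct from $v^x$; it therefore remains only to check that $\lambda=v^x$ has a two-dimensional eigenspace. I would exhibit two independent eigenvectors there — the contact eigenvector of the Euler block $B$ (with $(\rho,\rho v^x)$ components proportional to $(1,v^x)$) and the eigenvector supported on $E^{yy}$ coming from the $v^x$ eigenvalue of $D$ — and combine them with one eigenvector per simple speed to obtain a basis of $\mathbb R^6$, proving $A_1$, and hence $A(\mathbf u;\mathbf n)$, is diagonalizable over $\mathbb R$: the system is hyperbolic on $\Omega$.

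For the field classification I would compute, for each eigenpair, the coordinate-free scalar $\nabla_{\mathbf u}\lambda\cdot\mathbf r=\nabla_{\mathbf w}\lambda\cdot\tilde{\mathbf r}$ in the primitive variables $\mathbf w=(\rho,v^x,v^y,p^{xx},p^{xy},p^{yy})^T$ (the change of variables being invertible on $\Omega$ since $\rho>0$). For $\lambda=v^x\pm\sqrt{3p^{xx}/\rho}$ this is the classical acoustic computation for $\gamma=3$ Euler and returns a nonvanishing multiple of the sound speed throughout $\Omega$, so these two fields are genuinely nonlinear. For $\lambda=v^x\pm\sqrt{p^{xx}/\rho}$: since this value is not an eigenvalue of $B$, any associated eigenvector of the block-triangular $A_1$ has zero $(\rho,\rho v^x,E^{xx})$ part, hence does not perturb $\rho,v^x$ or $p^{xx}$ — the only variables on which $\lambda$ depends — so $\nabla\lambda\cdot\mathbf r\equiv0$ and the field is linearly degenerate. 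The same vanishing-component argument for the $E^{yy}$ eigenvector, together with $\nabla_{\mathbf u}(v^x)\cdot\mathbf r=0$ for the contact eigenvector, gives $\nabla_{\mathbf u}(v^x)\cdot\mathbf r\equiv0$ on the entire $v^x$ eigenspace, so that field too is linearly degenerate; undoing the rotation via $v^x\mapsto\vec v\cdot\mathbf n$, $p^{xx}\mapsto(\mathbf p\cdot\mathbf n)\cdot\mathbf n$ yields the full statement.

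The one genuine difficulty is bookkeeping: assembling the relevant Jacobian blocks without slips and, more delicately, fixing consistently normalized eigenvectors so that the genuine-nonlinearity dot products collapse to clean expressions; verifying the rotational-invariance conjugation likewise needs some care with the induced action on the symmetric tensor. As the result is classical (Levermore \cite{levermore_gaussian_1998}; Berthon \cite{berthon_numerical_2006}), it is reasonable to record the factorization and the eigenvector structure used for the field classification and to refer to those works for the remaining routine computations.
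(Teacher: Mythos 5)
The paper does not actually prove this lemma: it is stated as a known result and attributed to Levermore \cite{levermore_gaussian_1998} and Berthon \cite{berthon_numerical_2006}, so there is no in-paper argument to compare against. Your proposal is a correct, essentially self-contained derivation, and its key structural points check out against the fluxes given in Section \ref{sec:governing_eqs}: with $E^{xx}=\rho(v^x)^2+p^{xx}$ the triple $(\rho,\rho v^x,E^{xx})$ is indeed closed with fluxes $(\rho v^x,\,E^{xx},\,3v^xE^{xx}-2\rho(v^x)^3)$, i.e.\ $\gamma=3$ Euler up to doubling the energy; the complementary block $D$ in the ordering $(\rho v^y,E^{xy},E^{yy})$ works out to $\det(D-\lambda I)=(v^x-\lambda)\bigl((v^x-\lambda)^2-p^{xx}/\rho\bigr)$ exactly as you state; and the vanishing-component argument for linear degeneracy of the simple speeds $v^x\pm\sqrt{p^{xx}/\rho}$ is clean and correct, since $\lambda$ not being an eigenvalue of $B$ forces the eigenvector to have zero $(\rho,\rho v^x,E^{xx})$ components and hence to leave $\rho$, $v^x$, $p^{xx}$ unperturbed. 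The one step that needs tightening is the two-dimensionality of the $v^x$-eigenspace: because $v^x$ is an eigenvalue of \emph{both} diagonal blocks, extending the Euler contact eigenvector $x$ of $B$ to an eigenvector $(x,y)$ of the full block-triangular Jacobian requires checking that $(D-v^xI)y=-Cx$ is solvable even though $D-v^xI$ is singular; this is not automatic from block triangularity alone. It does hold — the cleanest verification is in primitive variables $(\rho,v^x,v^y,p^{xx},p^{xy},p^{yy})$, where the quasi-linear coefficient matrix shows the $v^x$-eigenspace is exactly $\operatorname{span}\{\partial_\rho,\partial_{p^{yy}}\}$ whenever $p^{xx}>0$ — and that computation simultaneously delivers the linear degeneracy of this field, so I would simply replace the extension argument by it.
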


In order to choose {\em physically relevant} solutions, following \cite{berthon2015entropy,sen2018entropy}, we will now describe the entropy framework. Let us define the following:
\begin{definition}
	A convex function $\mathcal{U}$ is said to be an entropy function for conservation laws \eqref{ConLaw} if there exist smooth functions $\mathcal{F}$ and $\mathcal{G}$ such that \begin{equation}
	\mathcal{F}'=\mathcal{U}'(\mathbf{u})\mathbf{f}'(\mathbf{u}),\,\,\mathcal{G}'=\mathcal{U}'(\mathbf{u})\mathbf{g}'(\mathbf{u}).
	\end{equation}
\end{definition}

For the ten-moment Gaussian clousre equations \eqref{ConLaw}, following \cite{sen2018entropy}, we consider entropy $\mathcal{U}$ and the entropy fluxes as follows:
\begin{equation}
\mathcal{U}=-\rho s,\,\mathcal{F}=\rho v^x s,\,\mathcal{G}=\rho v^y s,
\end{equation}
where $s=\ln\left(\dfrac{\det(\textbf{p})}{\rho^4}\right)$.  For the smooth solutions, we have the following equality (see \cite{sen2018entropy}):
\begin{equation}\label{en_eq}
\dfrac{\partial \mathcal{U}}{\partial t}+\dfrac{\partial \mathcal{F}}{\partial x}+\dfrac{\partial \mathcal{G}}{\partial y}=0,
\end{equation}
which for non-smooth weak solutions become {\em entropy inequality}:
\begin{equation}\label{en_ineq}
\dfrac{\partial \mathcal{U}}{\partial t}+\dfrac{\partial \mathcal{F}}{\partial x}+\dfrac{\partial \mathcal{G}}{\partial y}\leq0.
\end{equation}
Using entropy $\mathcal{U}$, we define entropy variable $\textbf{v}=\partial_{\textbf{u}}\mathcal{U}$. A lengthy calculation results in the following expression for $\mathbf{v}$:
\begin{equation*}
\textbf{v}=\partial_{\textbf{u}}\mathcal{U}=\begin{pmatrix}
4-s- \dfrac{\rho}{\det(\textbf{p})}\left(p^{xx}\left(v^y\right)^2+p^{yy}\left(v^x\right)^2-2p^{xy}v^x v^y\right)\\
\dfrac{2\rho v^y}{\det(\textbf{p})}\\
\dfrac{2\rho v^x}{\det(\textbf{p})}\\
-\dfrac{\rho p^{yy}}{\det(\textbf{p})}\\
\dfrac{2\rho p^{xy}}{\det(\textbf{p})}\\
-\dfrac{\rho p^{xx}}{\det(\textbf{p})}
\end{pmatrix}
\end{equation*}
In the following sections, we aim to design DG schemes which satisfy \eqref{en_ineq} at semi-discrete level. 

\section{Entropy Stable DG Schemes: One dimensional scheme}\label{sec:1dscheme}
To simplify the presentation, we will first present the numerical schemes for the one-dimensional model:
\begin{equation}
\label{1DConLaw}
\partial_t \mathbf{u}+\partial_x \mathbf{f}(\mathbf{u})=\mathbf{s}^x(\mathbf{u}).
\end{equation} 
We discretize the $x$-spatial domain into $N$ elements $I_i=\left[x_{i-\frac{1}{2}},\,x_{i+\frac{1}{2}}\right]\, (1\leq i\leq N)$. Then we seek a solution, \begin{equation*}
\mathbf{w}_h\in \mathbf{V}^{k}_h:=\displaystyle \left\{\mathbf{v}_h:\mathbf{v}_h|_{I_i}\in \left[\mathcal{P}^k(I_i)\right]^6,\,1\leq i\leq N \right\},
\end{equation*} such that for all $\mathbf{v}_h\in \mathbf{V}^{k}_h$ and for all $1\leq i\leq N$,
\begin{equation}\label{dgntform}
\begin{split}
\int_{I_i} \dfrac{\partial \mathbf{w}_h^T}{\partial t} \mathbf{v}_h dx-&\int_{I_i} \mathbf{f}(\mathbf{w}_h)^T\dfrac{d \mathbf{v}_h}{d x}  dx\\ &+ \hat{\mathbf{f}}^T_{i+1/2} \mathbf{v}_h(x_{i+1/2}^-)-\hat{\mathbf{f}}^T_{i-1/2} \mathbf{v}_h(x_{i-1/2}^+)=\int_{I_i} \mathbf{s}^x(\mathbf{w}_h)^T \mathbf{v}_h dx.
\end{split}
\end{equation} Here, $\hat{\mathbf{f}}_{i+1/2}$ is a numerical flux depends on the numerical solutions at element interface, that is, $\hat{\mathbf{f}}_{i+1/2}=\hat{\mathbf{f}}\left(\mathbf{w}_h(x_{i+1/2}^-), \mathbf{w}_h(x_{i+1/2}^+) \right)$. We intend to apply Gauss-Lobatto quadrature to the integrals in \eqref{dgntform}. To present the quadrature rules, we first consider the scalar case. This can be easily extended to  \eqref{dgntform}. Now applying the change of variable 
\begin{equation*}
x=\frac{x_{i+1/2}+x_{i-1/2}}{2}+\frac{1}{2}\xi \Delta x_{i},
\end{equation*} 
we have the following 
\begin{equation*}
\begin{split}
\frac{\Delta x_{i}}{2}\int_{I} \frac{\partial w_h}{\partial t} v_h d\xi&-\int_{I} f(w_h)\frac{d v_h}{d \xi}  d\xi\\
&+{\hat{f}_{i+1/2}} v_h(1)-{\hat{f}_{i-1/2}} v_h(-1)=\frac{\Delta x_{i}}{2}\int_{I} s_x(w_h) v_h d\xi,
\end{split}
\end{equation*}
with $I=[-1,1]$. Given the Gauss–Lobatto quadrature points,
\begin{equation*}
{ -1=\xi_0<\xi_1<\xi_2...<\xi_k=1},
\end{equation*}
and the corresponding weights $\omega_j,\, 0\leq j\leq k$,
we consider the following nodal basis,
\begin{equation*}
{L_j(\xi)=\prod_{l=0,l\neq j}^{N}\frac{\xi-\xi_l}{\xi_j-\xi_l}}.
\end{equation*}
Then $w_h$ is given by, $w_h=\sum_{j=0}^{k}u^i_jL_j(\xi)$. Furthermore, we approximate $f(w_h)$ and $s_x(w_h)$ as,
\begin{equation}\label{approxfh}
f(w_h)\approx f_h(\xi):=\sum_{j=0}^{k}f(u^i_j)L_j(\xi),\;\; s_x(w_h)\approx s_{x,h}(\xi):=\sum_{j=0}^{k}s_x(u^i_j)L_j(\xi).
\end{equation}
Finally, choosing the test function $v_h=L_j$, we have,
\begin{equation}\label{afterInt1}
\frac{\Delta x_{i}}{2}\frac{d}{dt}\langle w_h,\, L_j\rangle_{h}- \langle f_h,\, L'_j\rangle_{h}+{\hat{f}_{i+1/2}} L_j(1)-{\hat{f}_{i-1/2}} L_j(-1)=\frac{\Delta x_{i}}{2}\langle s_{x,h},\, L_j\rangle_{h}. 
\end{equation}
where we use the inner product notation, 
\begin{equation*}
\langle u,\,v\rangle:=\int_{I}uv d\xi,\;\;
\langle u,\,v\rangle _{h}:=\sum_{j=0}^{k}\omega_j u(\xi_j)v(\xi_j).
\end{equation*}
By using \ref{approxfh}, Equation \eqref{afterInt1} can be further written as,
\begin{equation}
\label{afterInt}
\frac{\Delta x_{i}}{2}\sum_{l=0}^{k}\frac{d u_l^i}{dt}\langle L_l,\, L_j\rangle_{h}- \sum_{l=0}^{k}f_l^i\langle L_l,\, L'_j\rangle+{\hat{f}_{i+1/2}} L_j(1)-{\hat{f}_{i-1/2}} L_j(-1)=\frac{\Delta x_{i}}{2}\sum_{l=0}^{k}s_x( u_l^i)\langle L_l,\, L_j\rangle_{h}.
\end{equation}
For a compact from of the scheme, let us define the matrices $D$, $M$ and $S$ as,
\begin{equation*}
\begin{cases}
D_{jl}=L'_l(\xi_j),\\
M_{jl}=\langle L_j,\,L_l\rangle _{h}=\omega_j\delta_{jl},\\
S{jl}=\langle L_j,\,L'_l\rangle _{h}=\langle L_j,\,L'_l\rangle.
\end{cases}
\end{equation*}
Also, define the boundary matrix as, 
\begin{equation}
B=\text{diag}[\tau_0,\tau_1,\dots,\, \tau_k], \text{ where }
\tau_j:=\begin{cases}
-1 & j=0\\
0 & 1\leq j\leq k-1\\
1 & j=k
\end{cases}.
\end{equation}
These matrices are also known as summation-by-parts (SBP) matrices and has the following properties:
	\begin{enumerate}
		\item SBP property (\cite{carpenter2014entropy}):
		\begin{equation}\label{sbp}
		\begin{cases}
		S=MD,\\
		MD+D^TM=S+S^T=B.
		\end{cases}
		\end{equation}
		\item For $0\leq j \leq k$ we have (see \cite{chen2017entropy}), \begin{equation}\label{prop2}
		\sum_{l=0}^{k}D_{jl}=\sum_{l=0}^{k}S_{jl}=0,\,\sum_{l=0}^{k}S_{lj}=\tau_j.
		\end{equation}
	\end{enumerate}

With the SBP matrices defined above, \eqref{afterInt} can be simplified as,
\begin{equation}\label{weaknodal}
\frac{\Delta x_{i}}{2}M\frac{d u^i}{dt}- S^Tf^i+B\hat{f}^i=\frac{\Delta x_{i}}{2}Ms_x^i.
\end{equation}
where, we have used the following notations,
\begin{align*}
u^i&=[u^i_0,\dots, u^i_k]^T\\
f^i&=[f^i_0,\dots, f^i_k]^T\\
s_x^i&=[s_x(u^i_0),\dots, s_x(u^i_k)]^T,\\
\hat{f}^i&=[f_{i-1/2},0,\dots,0, f_{i+1/2}]^T.
\end{align*}
The scheme \eqref{weaknodal} can now be written in the following form,	
\begin{equation}
\frac{d u^i}{dt}+ \frac{2}{\Delta x_{i}}Df^i=\frac{2}{\Delta x_{i}}M^{-1}B(f^i-\hat{f}^i)+s_x^i.
\end{equation}
For a single element, this scheme can be written as (ignoring the element index i),
\begin{equation}\label{OneElement}
\frac{d u_j}{dt}+ \frac{2}{\Delta x}\sum_{l=0}^{k} D_{jl}f_l=\frac{2}{\Delta x}\frac{\tau_j}{\omega_j}(f_j-\hat{f}_j)+(s_x)_j.
\end{equation}
A similar analysis works for the system case (see \cite{chen2017entropy}), and without loss of generality, the scheme \eqref{OneElement} can be extended to \eqref{1DConLaw} as,
\begin{equation}\label{SysOneElement}
\frac{d \mathbf{u}_j}{dt}+ \frac{2}{\Delta x}\sum_{l=0}^{k} D_{jl}\mathbf{f}_l=\frac{2}{\Delta x}\frac{\tau_j}{\omega_j}(\mathbf{f}_j-\hat{\mathbf{f}}_j)+(\mathbf{s}_x)_j.
\end{equation} 
In general, we do not have an entropy stability proof of the scheme \eqref{SysOneElement}. However, a modification to the scheme \eqref{SysOneElement} provides an entropy estimation. First, let us consider the following definitions:
\begin{definition}
	A two point symmetric, consistent numerical flux $\mathbf{f}^*$ is said to be {entropy conservative} flux for an entropy function $\mathcal{U}$ if \begin{equation}
	(\mathbf{v}_R-\mathbf{v}_L)^\text{T}\mathbf{f}^*(\mathbf{u}_R,\,\mathbf{u}_L)=\psi_R-\psi_L,
	\end{equation}
	where $\mathbf{v}=\mathcal{U}'(\mathbf{u})$ is known as entropy variable, and $\psi=\mathbf{v}^\text{T}\cdot\mathbf{f}-\mathcal{F}$ is the entropy potential. $L$ and $R$ in suffix denote the left and right state.
\end{definition}
\begin{definition}
	A two-point symmetric, consistent numerical flux $\mathbf{f}$is said to be {entropy stable} flux for the entropy function $\mathcal{U}$ if \begin{equation}
	(\mathbf{v}_R-\mathbf{v}_L)^\text{T}\mathbf{f}(\mathbf{u}_R,\,\mathbf{u}_L)\leq\psi_R-\psi_L.
	\end{equation}
\end{definition}

We modify \eqref{SysOneElement} as,

\begin{equation}\label{ESDGOneElement}
\frac{d \mathbf{u}_j}{dt}+ \frac{4}{\Delta x}\sum_{l=0}^{k} D_{jl}{\mathbf f}^*(\mathbf{u}_j,\mathbf{u}_l)=\frac{2}{\Delta x}\frac{\tau_j}{\omega_j}(\mathbf{f}_j-\hat{\mathbf{f}}_j)+(\mathbf{s}_x)_j.
\end{equation} 
where ${\mathbf f}^*$ is taken to be entropy conservative.
Then we have the following result.
\begin{theorem}[\cite{chen2017entropy}]
	If ${\mathbf f}^*(\mathbf{u}_j,\mathbf{u}_l)$ is consistent and symmetric, then \eqref{ESDGOneElement} is conservative and atleast k-th order accurate. If we further assume that ${\mathbf f}^*(\mathbf{u}_j,\mathbf{u}_l)$ is entropy conservative, then \eqref{ESDGOneElement} is also  locally entropy conservative within a single element.
\end{theorem}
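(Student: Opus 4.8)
The plan is to establish the three assertions — conservation, $k$-th order accuracy, and local entropy conservation — essentially separately, since each rests on a different structural property of the SBP operators $D$, $M$, $S$, $B$ collected in \eqref{sbp} and \eqref{prop2}.

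\textbf{Conservation.} First I would sum \eqref{ESDGOneElement} against the quadrature weights, i.e. compute $\sum_{j=0}^{k}\omega_j \,\frac{d\mathbf{u}_j}{dt}$. The nonlinear volume term becomes $\frac{4}{\Delta x}\sum_{j}\sum_{l}\omega_j D_{jl}\mathbf{f}^*(\mathbf{u}_j,\mathbf{u}_l) = \frac{4}{\Delta x}\sum_{j}\sum_{l}S_{jl}\mathbf{f}^*(\mathbf{u}_j,\mathbf{u}_l)$ using $M_{jl}=\omega_j\delta_{jl}$ and $S=MD$. By the symmetry $\mathbf{f}^*(\mathbf{u}_j,\mathbf{u}_l)=\mathbf{f}^*(\mathbf{u}_l,\mathbf{u}_j)$ and the SBP identity $S+S^T=B$, the double sum telescopes to $\frac12\sum_{j}\sum_{l}(S_{jl}+S_{lj})\mathbf{f}^*(\mathbf{u}_j,\mathbf{u}_l) = \frac12\sum_{j}B_{jj}\mathbf{f}^*(\mathbf{u}_j,\mathbf{u}_j) = \frac12(\mathbf{f}(\mathbf{u}_k)-\mathbf{f}(\mathbf{u}_0))$ using consistency $\mathbf{f}^*(\mathbf{u},\mathbf{u})=\mathbf{f}(\mathbf{u})$. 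The surface term $\frac{2}{\Delta x}\sum_j\tau_j(\mathbf{f}_j-\hat{\mathbf{f}}_j)$ contributes only at $j=0,k$, and the two cell-edge fluxes $\hat{\mathbf{f}}_{i\pm1/2}$ are single-valued across interfaces, so summing over all elements the interior contributions cancel in pairs. This is the standard telescoping argument and should be routine.

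\textbf{Accuracy.} For the order claim I would invoke the consistency of $\mathbf{f}^*$ together with property \eqref{prop2}, namely $\sum_l D_{jl}=0$. The key identity is that for a smooth exact solution, $\frac{4}{\Delta x}\sum_l D_{jl}\mathbf{f}^*(\mathbf{u}(x_j),\mathbf{u}(x_l))$ approximates $\partial_x\mathbf{f}$ at $x_j$ to order $k$: one writes $\mathbf{f}^*(\mathbf{u}(x_j),\mathbf{u}(x_l)) = \frac12\mathbf{f}(\mathbf{u}(x_j))+\frac12\mathbf{f}(\mathbf{u}(x_l)) + O(\Delta x^2)$ by consistency and smoothness of $\mathbf{f}^*$, use $\sum_l D_{jl}\cdot(\text{const})=0$ to drop the $\mathbf{f}(\mathbf{u}(x_j))$ piece, and then $\frac{2}{\Delta x}\sum_l D_{jl}\mathbf{f}(\mathbf{u}(x_l))$ is the standard collocation derivative which is $k$-th order accurate on Gauss–Lobatto nodes, while the surface correction $\frac{2}{\Delta x}\frac{\tau_j}{\omega_j}(\mathbf{f}_j-\hat{\mathbf{f}}_j)$ is a consistent penalty. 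The details are the same as in \cite{chen2017entropy} and I would cite that rather than reprove the interpolation estimates.

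\textbf{Local entropy conservation.} This is the heart of the matter and the main obstacle. I would contract \eqref{ESDGOneElement} (with the source term set aside, or treated via its own entropy-consistent discretization) with the entropy variable $\mathbf{v}_j=\mathcal{U}'(\mathbf{u}_j)$ and multiply by $\omega_j$, then sum over $j$. The time term gives $\sum_j\omega_j\,\mathbf{v}_j^T\frac{d\mathbf{u}_j}{dt}=\frac{d}{dt}\sum_j\omega_j\,\mathcal{U}(\mathbf{u}_j)$, the discrete cell entropy. For the volume term one gets $\frac{4}{\Delta x}\sum_j\sum_l\omega_j D_{jl}\,\mathbf{v}_j^T\mathbf{f}^*(\mathbf{u}_j,\mathbf{u}_l) = \frac{4}{\Delta x}\sum_j\sum_l S_{jl}\,\mathbf{v}_j^T\mathbf{f}^*(\mathbf{u}_j,\mathbf{u}_l)$; using $S+S^T=B$ to symmetrize, this equals $\frac{2}{\Delta x}\sum_j\sum_l S_{jl}(\mathbf{v}_j-\mathbf{v}_l)^T\mathbf{f}^*(\mathbf{u}_j,\mathbf{u}_l) + \frac{2}{\Delta x}\sum_j B_{jj}\mathbf{v}_j^T\mathbf{f}(\mathbf{u}_j)$. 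Now the entropy conservation property of $\mathbf{f}^*$ replaces $(\mathbf{v}_j-\mathbf{v}_l)^T\mathbf{f}^*(\mathbf{u}_j,\mathbf{u}_l)$ by $\psi_j-\psi_l$, and by the telescoping/antisymmetry argument again (using $\sum_l S_{jl}=0$ from \eqref{prop2}) the $\sum S_{jl}(\psi_j-\psi_l)$ sum collapses to a pure boundary expression $-\sum_j\tau_j\psi_j$. Combining with the surface penalty term contracted against $\mathbf{v}_j$ (which is also supported only at $j=0,k$), everything on the right-hand side reduces to a flux difference $\hat{\mathcal{F}}_{i+1/2}-\hat{\mathcal{F}}_{i-1/2}$ of a numerical entropy flux built from $\hat{\mathbf{f}}$, $\mathbf{v}_{0/k}$ and $\psi_{0/k}$, with no interior production term — hence local entropy conservation. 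The delicate points I expect to fight with are: bookkeeping the two separate boundary contributions (from $B$ in the volume term and from $\tau_j$ in the penalty term) so they assemble into a single well-defined $\hat{\mathcal{F}}$; and verifying that the source term $(\mathbf{s}_x)_j$, once contracted with $\mathbf{v}_j$, does not spoil the balance — which will force the assumption that the source is discretized consistently with the entropy structure (this is presumably where the "locally entropy conservative" qualifier and the restriction to the homogeneous part come from). I would follow the template of \cite{chen2017entropy} closely here and defer to it for the parts that are verbatim identical.
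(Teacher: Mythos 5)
Your conservation and entropy-conservation arguments are essentially the paper's: sum (resp.\ contract with $\mathbf{v}_j$ and then sum) against the quadrature weights, convert $\omega_j D_{jl}$ into $S_{jl}$ via $S=MD$, symmetrize with $S+S^T=B$, and in the entropy case use $(\mathbf{v}_j-\mathbf{v}_l)^T\mathbf{f}^*(\mathbf{u}_j,\mathbf{u}_l)=\psi_j-\psi_l$ together with $\sum_l S_{jl}=0$ and $\sum_j S_{jl}=\tau_l$ from \eqref{prop2} to collapse the double sum to the boundary expression $(\psi_k-\mathbf{v}_k^T\hat{\mathbf{f}}_{i+1/2})-(\psi_0-\mathbf{v}_0^T\hat{\mathbf{f}}_{i-1/2})$. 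Those two parts are fine.

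The accuracy sketch, however, has a genuine gap as written. You expand $\mathbf{f}^*(\mathbf{u}_j,\mathbf{u}_l)=\tfrac12\mathbf{f}_j+\tfrac12\mathbf{f}_l+O(\Delta x^2)$ and discard the remainder, but the remainder is then hit with $\tfrac{4}{\Delta x}\sum_l D_{jl}(\cdot)$, and a generic $O(\Delta x^2)$ term treated this way only yields $O(\Delta x)$ --- first order, not $k$-th. The argument that actually delivers $k$-th order (and is the one the paper uses) is to regard $\xi\mapsto \mathbf{f}^*(\mathbf{u}(x_j),\mathbf{u}(x(\xi)))$ as a smooth function of the \emph{second} argument only, apply the exactness of $D$ on polynomials of degree $\le k$ to get $\tfrac{2}{\Delta x}\sum_l D_{jl}\mathbf{f}^*(\mathbf{u}_j,\mathbf{u}_l)=\partial_y\mathbf{f}^*(x_j,x_j)+O(\Delta x^k)$, and then use the identity $\partial_x\mathbf{f}(x)=\partial_x\mathbf{f}^*(x,x)+\partial_y\mathbf{f}^*(x,x)=2\,\partial_y\mathbf{f}^*(x,x)$, which follows from symmetry plus consistency of $\mathbf{f}^*$. (Your remainder does in fact contribute only $O(\Delta x^k)$, because it vanishes to second order on the diagonal and the collocation derivative is exact to degree $k$ --- but seeing that requires exactly the diagonal-derivative argument, so one may as well use it directly.) A second, smaller point: you hedge on the source term, but no extra assumption or restriction to the homogeneous system is needed for the entropy balance. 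For the ten-moment system the entropy variable satisfies $\mathbf{v}^T\mathbf{s}_x=0$ pointwise, so the collocated source contribution $\sum_j\tfrac{\Delta x}{2}\omega_j\mathbf{v}_j^T(\mathbf{s}_x)_j$ vanishes identically; the paper keeps the source in the entropy computation and kills it with this identity. Only the conservation statement for $\mathbf{u}$ itself is made with the source set aside.
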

\begin{proof}
	{\setlength{\parindent}{0cm}\textbf{Conservation:}}
	Ignoring the source term we have,
	\begin{flalign*}
	\frac{d}{dt}\left({\sum_{j=0}^{k}\frac{\Delta x}{2}\omega_j\mathbf{u}_j}\right)&=\sum_{j=0}^{k}\tau_j(\mathbf{\mathbf{f}}_j-\hat{\mathbf{f}}_j)-2\sum_{j=0}^{k}\sum_{l=0}^{k}S_{jl}\mathbf{f}^*(\mathbf{u}_j,\mathbf{u}_l)\\
	&=\sum_{j=0}^{k}\tau_j(\mathbf{f}_j-\hat{\mathbf{f}}_j)-\sum_{j=0}^{k}\sum_{l=0}^{k}(S_{jl}+S_{lj})\mathbf{f}^*(\mathbf{u}_j,\mathbf{u}_l)\\
	&=\sum_{j=0}^{k}\tau_j(\mathbf{f}_j-\hat{\mathbf{f}}_j)-\sum_{j=0}^{k}\sum_{l=0}^{k}B_{jl}\mathbf{f}^*(\mathbf{u}_j,\mathbf{u}_l),\,\,(S+S^T=B)\\
	&=-(\mathbf{f}_{i+1/2}^*-\mathbf{f}_{i-1/2}^*).
	\end{flalign*}
	The above expression shows that the scheme is conservative within an element.
	\par {\setlength{\parindent}{0cm} \textbf{Accuracy:}}
	The accuracy analysis is easier to see in scalar case. Also,  it can be easily extended for the systems case. Let $f^*(x,y)=f^*(u(x),u(y))$ and $f(x)=f(u(x))$. Then,
	\begin{equation*}
	\frac{\partial f}{\partial x}(x)=\frac{\partial f^*}{\partial x}(x,x)+\frac{\partial f^*}{\partial y}(x,x)=2\frac{\partial f^*}{\partial y}(x,x).
	\end{equation*}
	Since the matrix $D$ is exact for polynomials of degree up to $k$,
	\begin{flalign*}
	\frac{4}{\Delta x} \sum_{l=0}^{k}D_{jl}f^*(x(\xi_j),x(\xi_l))&=2\frac{\partial f^*}{\partial y}(x(\xi_j),x(\xi_j))+O({\Delta x}^k)\\&=\frac{\partial f}{\partial x}(x(\xi_j))+O({\Delta x}^k).
	\end{flalign*}
	{\setlength{\parindent}{0cm}\textbf{Entropy conservation:}}
	Entropy production within an single element can be calculated as, 
	\begin{flalign}\label{en_exp}
	\frac{d}{dt}\left(\sum_{j=0}^{k}\frac{\Delta x}{2}\omega_j\mathcal{U}_j\right)&=\sum_{j=0}^{k}\frac{\Delta x}{2}\omega_j\mathbf{v}_j^T\frac{d\mathbf{u}_j}{dt}\nonumber\\
	&=\sum_{j=0}^{k}\tau_j\mathbf{v}_j^T(\mathbf{f}_j-\hat{\mathbf{f}}_j)-2{\sum_{j=0}^{k}\sum_{l=0}^{k}S_{jl}\mathbf{v}_j^T\mathbf{f}^*(\mathbf{u}_j,\mathbf{u}_l)}+\sum_{j=0}^{k}\frac{\Delta x}{2}\omega_j\mathbf{v}_j^T(\mathbf{s}_x)_j\nonumber\\
	&=\sum_{j=0}^{k}\tau_j\mathbf{v}_j^T(\mathbf{f}_j-\hat{\mathbf{f}}_j)-\sum_{j=0}^{k}\sum_{l=0}^{k}(B_{jl}+S_{jl}-S_{lj})\mathbf{v}_j^T\mathbf{f}^*(\mathbf{u}_j,\mathbf{u}_l)\nonumber\\
	& \hspace{150pt}(\text{We have used $S+S^T=B$ and $\mathbf{v}^T\mathbf{s}_x=0$.})\\
	&=-\sum_{j=0}^{k}\tau_j\mathbf{v}_j^T\hat{\mathbf{f}}_j-\sum_{j=0}^{k}\sum_{l=0}^{k}S_{jl}(\mathbf{v}_j-\mathbf{v}_l)^T\mathbf{f}^*(\mathbf{u}_j,\mathbf{u}_l)\nonumber\\
	&=-\sum_{j=0}^{k}\tau_j\mathbf{v}_j^T\hat{\mathbf{f}}_j-\sum_{j=0}^{k}\sum_{l=0}^{k}S_{jl}(\psi_j-\psi_l)\nonumber\\
	&=-\sum_{j=0}^{k}\tau_j\mathbf{v}_j^T\hat{\mathbf{f}}_j+\sum_{j=0}^{k}\tau_j\psi_j,\;\;\;(\text{ We have used \eqref{prop2}.})\nonumber\\
	&=(\psi_k-\mathbf{v}_k^T\hat{\mathbf{f}}_{i+1/2})-(\psi_0-\mathbf{v}_0^T\hat{\mathbf{f}}_{i-1/2}).
	\end{flalign}
	Equation \eqref{en_exp} shows that the scheme is entropy conservative within a single element.
\end{proof}
\begin{theorem}
	If the numerical flux $\hat{\mathbf{f}}$ at the element interface is entropy stable, then the scheme \eqref{ESDGOneElement} is entropy stable.
\end{theorem}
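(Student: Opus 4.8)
The plan is to start from the single--element entropy balance established in the proof of the previous theorem and sum it over the mesh, absorbing the leftover interface terms by means of the hypothesis that $\hat{\mathbf{f}}$ is entropy stable. Recall that, reading off \eqref{en_exp} for the element $I_i$,
\[
\frac{d}{dt}\left(\sum_{j=0}^{k}\frac{\Delta x_i}{2}\omega_j\,\mathcal{U}(\mathbf{u}^i_j)\right)
=\Big(\psi^i_k-(\mathbf{v}^i_k)^{T}\hat{\mathbf{f}}_{i+1/2}\Big)-\Big(\psi^i_0-(\mathbf{v}^i_0)^{T}\hat{\mathbf{f}}_{i-1/2}\Big),
\]
where $\mathbf{v}^i_j=\mathcal{U}'(\mathbf{u}^i_j)$, $\psi^i_j=(\mathbf{v}^i_j)^{T}\mathbf{f}(\mathbf{u}^i_j)-\mathcal{F}(\mathbf{u}^i_j)$, and $j=0$ (resp. $j=k$) refers to the left (resp. right) boundary Gauss--Lobatto node of $I_i$. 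The key point is that this identity used only the SBP relations \eqref{sbp}, property \eqref{prop2}, entropy conservation of the volume flux $\mathbf{f}^*$, and $\mathbf{v}^{T}\mathbf{s}_x=0$, so it holds for \emph{any} interface flux $\hat{\mathbf{f}}$. At the interface $x_{i+1/2}$ the two adjacent cells see the same traces $\mathbf{u}_L:=\mathbf{u}^i_k=\mathbf{w}_h(x_{i+1/2}^-)$, $\mathbf{u}_R:=\mathbf{u}^{i+1}_0=\mathbf{w}_h(x_{i+1/2}^+)$ and the same value $\hat{\mathbf{f}}_{i+1/2}=\hat{\mathbf{f}}(\mathbf{u}_L,\mathbf{u}_R)$.

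Next I would introduce the numerical entropy flux
\[
\hat{\mathcal{F}}_{i+1/2}:=\tfrac12\,(\mathbf{v}_L+\mathbf{v}_R)^{T}\hat{\mathbf{f}}_{i+1/2}-\tfrac12\,(\psi_L+\psi_R),
\]
which is consistent, $\hat{\mathcal{F}}(\mathbf{u},\mathbf{u})=\mathbf{v}^{T}\mathbf{f}(\mathbf{u})-\psi=\mathcal{F}(\mathbf{u})$, by the definition of $\psi$. A short rearrangement then shows that the contribution of the interface $x_{i+1/2}$ to the balance of $I_i$ equals $-\hat{\mathcal{F}}_{i+1/2}+R_{i+1/2}$ and its contribution to the balance of $I_{i+1}$ equals $\hat{\mathcal{F}}_{i+1/2}+R_{i+1/2}$, with the common remainder
\[
R_{i+1/2}:=\tfrac12\Big[(\mathbf{v}_R-\mathbf{v}_L)^{T}\hat{\mathbf{f}}_{i+1/2}-(\psi_R-\psi_L)\Big]\le 0,
\]
the inequality being precisely the entropy stability of $\hat{\mathbf{f}}$. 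Substituting back into the single--element identity gives the semidiscrete cell entropy inequality
\[
\frac{d}{dt}\left(\sum_{j=0}^{k}\frac{\Delta x_i}{2}\omega_j\,\mathcal{U}(\mathbf{u}^i_j)\right)+\hat{\mathcal{F}}_{i+1/2}-\hat{\mathcal{F}}_{i-1/2}=R_{i+1/2}+R_{i-1/2}\le 0,
\]
and summing over $i$ (with periodic data, or data of compact support, so that $\hat{\mathcal{F}}$ telescopes) yields $\frac{d}{dt}\sum_{i}\sum_{j}\frac{\Delta x_i}{2}\omega_j\,\mathcal{U}(\mathbf{u}^i_j)\le 0$, which is the claimed entropy stability.

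I do not expect a real obstacle: all the structural work --- the SBP manipulations and the entropy conservation of $\mathbf{f}^*$ --- was already done in the previous theorem, and the edge flux enters only through the scalar inequality $R_{i+1/2}\le 0$. The one place needing care is the bookkeeping of the left/right traces at each interface and the sign tracking in the rearrangement that produces $R_{i+1/2}$; I would write that step out in full. It is also worth remarking that the argument is independent of which entropy stable edge flux is chosen (for instance an entropy conservative flux augmented by a sign--definite dissipation term, or the scaled local Lax--Friedrichs flux used in the numerical experiments), and that taking $\hat{\mathbf{f}}$ entropy conservative makes every $R_{i+1/2}=0$, recovering global entropy conservation.
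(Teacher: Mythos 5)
Your argument is correct and takes essentially the same route as the paper: the entropy production contributed by each interface is $(\mathbf{v}_R-\mathbf{v}_L)^{T}\hat{\mathbf{f}}-(\psi_R-\psi_L)\le 0$ by the entropy stability of $\hat{\mathbf{f}}$, and the global bound follows by summing the per-element identity from the preceding theorem under periodic or compactly supported boundary data. Your extra packaging via the consistent numerical entropy flux $\hat{\mathcal{F}}_{i+1/2}$ additionally delivers a local cell entropy inequality, a slightly stronger intermediate statement than the paper records, but the core inequality invoked is identical.
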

\begin{proof}
	The entropy production rate at the interface is
	\begin{flalign}\label{enstbl_exp}
	(\psi_k^i-(\mathbf{v}_k^i)^T\hat{\mathbf{f}}(\mathbf{u}^{i}_k,\mathbf{u}^{i+1}_0)-(\psi_0^{i+1}-(\mathbf{v}_0^{i+1})^T\hat{\mathbf{f}}(\mathbf{u}^{i}_k,\mathbf{u}^{i+1}_0)\nonumber\\
	=(\mathbf{v}_0^{i+1}-\mathbf{v}_k^i)^T\hat{\mathbf{f}}(\mathbf{u}^{i}_k,\mathbf{u}^{i+1}_0)-(\psi_0^{i+1}-\psi_k^i).
	\end{flalign}
	Expression \eqref{enstbl_exp} clearly shows that if $\hat{\mathbf{f}}$ is entropy stable then together with the compact (or periodic) boundary condition the scheme is globally entropy stable.
\end{proof}

\section{Entropy Stable DG Schemes: Two dimensional scheme}\label{sec:2dscheme}
In this section, we will describe the two dimensional schemes.  We use the rectangular mesh $I_{i,j}=\left[x_{i-\frac{1}{2}},\,x_{i+\frac{1}{2}}\right]\times\left[y_{j-\frac{1}{2}},\,y_{j+\frac{1}{2}}\right]\, (1\leq i\leq N_x), (1\leq j\leq N_y)\,$ with mesh size $\Delta x_i$ and $\Delta y_j$ in $x$ and $y$ direction, respectively. For simplicity, we consider the same number of Gauss-Lobatto points ($k+1$) in the both directions. We use the following change of variables,
\begin{align*}
x_i(\xi)&=\dfrac{1}{2}(x_{i-1/2}+x_{i+1/2})+\dfrac{\xi}{2}\Delta x_i,\\
y_j(\xi)&=\dfrac{1}{2}(y_{j-1/2}+y_{j+1/2})+\dfrac{\xi}{2}\Delta y_j.
\end{align*}
We denote the nodal values as, $\mathbf{u}^{p,q}=\mathbf{w}_h(x_i(\xi_p),y_j(\xi_q))$.
Then for a single element $I_{i,j}$, the scheme is given by,
\begin{eqnarray}\label{2dscheme}
\dfrac{d \mathbf{u}^{p,q}}{dt}&=&-\dfrac{2}{\Delta x} \left(\sum_{l=0}^{k} 2 D_{pl}\mathbf{f}^*(\mathbf{u}^p,\mathbf{u}^l)-\dfrac{\tau_p}{\omega_p}(\mathbf{f}^{p,q}-\hat{\mathbf{f}}^{p,q})\right)\\
&&\qquad-\dfrac{2}{\Delta y}\left(\sum_{l=0}^{k} 2 D_{ql}\mathbf{g}^*(\mathbf{u}^q,\mathbf{u}^l)-\dfrac{\tau_q}{\omega_q}(\mathbf{g}^{p,q}-\hat{\mathbf{g}}^{p,q})\right)+\mathbf{s}_x^{p,q}+\mathbf{s}_y^{p,q},\,\,\,(p,q=0,1,\dots,k)\nonumber
\end{eqnarray}
where we have used the following notations by dropping the indices $i$ and $j$,
\begin{align*}
[\hat{\mathbf{f}}^{0,q},\hat{\mathbf{f}}^{1,q}\dots,\hat{\mathbf{f}}^{k,q}]&=[\hat{\mathbf{f}}_{i-1/2,q},0,\dots,0, \hat{\mathbf{f}}_{i+1/2,q}],\\
[\hat{\mathbf{g}}^{p,0},\hat{\mathbf{g}}^{p,1}\dots,\hat{\mathbf{g}}^{p,k}]&=[\hat{\mathbf{g}}_{p,j-1/2},0,\dots,0, \hat{\mathbf{g}}_{p,j+1/2}],\\
\hat{\mathbf{f}}_{i+1/2,q}&=\hat{\mathbf{f}}\left(\mathbf{w}_h(x_{i+1/2}^-,y_j(\xi_q)), \mathbf{w}_h(x_{i+1/2}^+,y_j(\xi_q)) \right),\\
\hat{\mathbf{g}}_{p,j+1/2}&=\hat{\mathbf{g}}\left(\mathbf{w}_h(x_i(\xi_p),y_{j+1/2}^-), \mathbf{w}_h(x_i(\xi_p),y_{j+1/2}^+) \right).
\end{align*}
Here $\mathbf{f}^*$ and $\mathbf{g}^*$  are entropy conservative fluxes and the interface fluxes $\hat{\mathbf{f}}$ and $\hat{\mathbf{g}}$ are entropy stable. The proof of consistency, accuracy and entropy stability is similar to the one dimensional case (see \cite{chen2017entropy}).

\section{Numerical results}\label{sec:num_results}
For the numerical test cases, we consider the cases with $k=1$, which results in a second-order scheme denoted by ESDG-O2 and $k=2$, which results in a third-order scheme ESDG-O3. For the time integration, we use SSP Runge-Kutta (\cite{gottlieb2001strong}) second and third-order time discretization for ESDG-O2 and ESDG-O3 schemes, respectively.
We make following choices to complete the description of the scheme:
\begin{itemize}
	\item \textbf{Entropy conservative flux}: We use the entropy conservative fluxes $\mathbf{f}^*$ and $\mathbf{g}^*$ presented in \cite{sen2018entropy}, and described in Appendix \ref{ecfluxes}.
	\item \textbf{Entropy stable flux}: For the entropy stable fluxes $\hat{\mathbf{f}},$ and $\hat{\mathbf{g}}$, we use Lax-Friedrich numerical fluxes.
\end{itemize}
For one-dimensional problems, we use the TVDM limiter \cite{cockburn2001runge} in order to avoid the spurious oscillations. The TVDM limiter uses $M=10$ unless stated explicitly. Note that, use of such a limiter at the post-processing does not guaranty the entropy stability. We also use the bound preserving limiter presented in \cite{meena2017positivity}, wherever needed. Bound preserving limiter does not increase the entropy (see  \cite{chen2017entropy}). We use CFL 0.2 for all test cases, where CFL condition is implemented following \cite{chen2017entropy}.

\subsection{One dimensional numerical tests} 
We will now present the results for the one dimensional test cases. 
\begin{example}[Accuracy test (without source):]
	In this test problem, we demonstrate the formal order of accuracy of the schemes without the source terms. Ignoring the source terms by taking $W=0$, we consider the following exact solution $[-0.5,\,0.5]$,
	\begin{equation*}
	\rho(x,t)=2+\sin(2\pi (x-t)),\, v^x(x,0)=1,\,v^y(x,0)=0,\,p^{xx}(x,0)=p^{yy}(x,0)=1,\,p^{xy}(x,0)=0.
	\end{equation*}
	A periodic boundary is used for the computations.
	Error is computed using the exact solution at $T=0.5$. 
	\begin{table}[htb!]
		\centering
		\begin{tabular}{ccccc}
			\hline N  & $L^1$ error  &  Order &  $L^\infty$ error      & Order \\ 
			\hline 32 & 2.58e-02 &  ...  & 2.34e-02 & ... \\ 
			64 & 6.50e-03 &  1.99 & 5.61e-03 & 2.06 \\ 
			128 & 1.63e-03 &  2.00 & 1.39e-03 & 2.02 \\ 
			256 & 4.07e-04 &  2.00 & 3.45e-04 & 2.00 \\ 
			512 & 1.02e-04 &  2.00 & 8.63e-05 & 2.00 \\ 
			\hline 
		\end{tabular} 
		\caption{$L^1$ and $L^{\infty}$ errors and order of accuracy for density using ESDG-O2}
		\label{tab:acc2}
	\end{table}
	\begin{table}[htb!]
		\centering
		\begin{tabular}{ccccc}
			\hline N  & $L^1$ error  &  Order &  $L^\infty$ error      & Order \\ 
			\hline 32 & 6.27e-04 &  ...  & 5.64e-04 & ... \\ 
			64 & 8.34e-05 &  2.91 & 7.46e-05 & 2.92 \\ 
			128 & 1.06e-05 &  2.97 & 9.40e-06 & 2.99 \\ 
			256 & 1.34e-06 &  2.99 & 1.17e-06 & 3.00 \\ 
			512 & 1.67e-07 &  3.00 & 1.46e-07 & 3.00 \\ 
			\hline 
		\end{tabular} 
		\caption{$L^1$ and $L^{\infty}$ errors and order of accuracy for density using ESDG-O3}
		\label{tab:acc3}
	\end{table}
	We present the $L^1$ and $L^{\infty}$ errors and order of accuracy for the density in Tables\ref{tab:acc2} using the ESDG-O2 scheme. We observe that the schemes have reached the desired second order of accuracy. Similarly, in Table \ref{tab:acc3}, we have presented errors for ESDG-O3. Here again, we observe that the proposed scheme has a third-order accuracy.
\end{example}	

\begin{example}[Accuracy test (with source):]
	In this test problem, we check the order of accuracy of the schemes by considering a time dependent source corresponds to the potential, $W=\sin(2\pi (x-t))$. We consider the following smooth exact solution in domain $x\in[-0.5,\,0.5]$ for this test case:
	\begin{flalign*}
	&\rho(x,t)=2+\sin(2\pi (x-t)),\, v^x(x,0)=1,\,v^y(x,0)=0,
	\\
	&p^{xx}(x,t)=1.5+\dfrac{1}{8}\left(\cos(4 \pi (x-t))-8\sin(2 \pi (x-t))\right),\,p^{xy}(x,0)=0,\,p^{yy}(x,0)=1.
	\end{flalign*}
		\begin{table}[htb!]
		\centering
		\begin{tabular}{ccccc}
			\hline N  & $L^1$ error  &  Order &  $L^\infty$ error      & Order \\ 
			\hline 32 & 9.48e-02 &  ...  & 1.71e-01 & ... \\ 
			64 & 2.50e-02 &  1.93 & 4.76e-02 & 1.84 \\ 
			128 & 6.32e-03 &  1.98 & 1.31e-02 & 1.86 \\ 
			256 & 1.58e-03 &  2.00 & 3.41e-03 & 1.95 \\ 
			512 & 3.96e-04 &  2.00 & 8.62e-04 & 1.98 \\ 
			\hline 
		\end{tabular} 
		\caption{Accuracy table of ESDG-O2}
		\label{tab:acc02}
	\end{table}
	\begin{table}[htb!]
		\centering
		\begin{tabular}{ccccc}
			\hline N  & $L^1$ error  &  Order &  $L^\infty$ error      & Order \\ 
			\hline 32 & 1.51e-03 &  ...  & 2.45e-03 & ... \\ 
			64 & 2.30e-04 &  2.71 & 4.05e-04 & 2.59 \\ 
			128 & 3.17e-05 &  2.86 & 5.07e-05 & 3.00 \\ 
			256 & 4.23e-06 &  2.91 & 5.91e-06 & 3.10 \\ 
			512 & 5.83e-07 &  2.86 & 1.00e-06 & 2.56 \\
			\hline 
		\end{tabular} 
		\caption{Accuracy table of ESDG-O3}
		\label{tab:acc03}
	\end{table}
	We present  the results at $T=0.5$ using periodic boundary. The errors and order of accuracy for density are presented in Table \ref{tab:acc02} and \ref{tab:acc03} for ESDG-O2 and ESDG-O3 schemes, respectively. In both cases, we observe that the schemes have the desired order of accuracy.
\end{example}

	\begin{figure}[htb!]
	\centering
	\begin{subfigure}[b]{0.45\textwidth}
		\includegraphics[width=\textwidth]{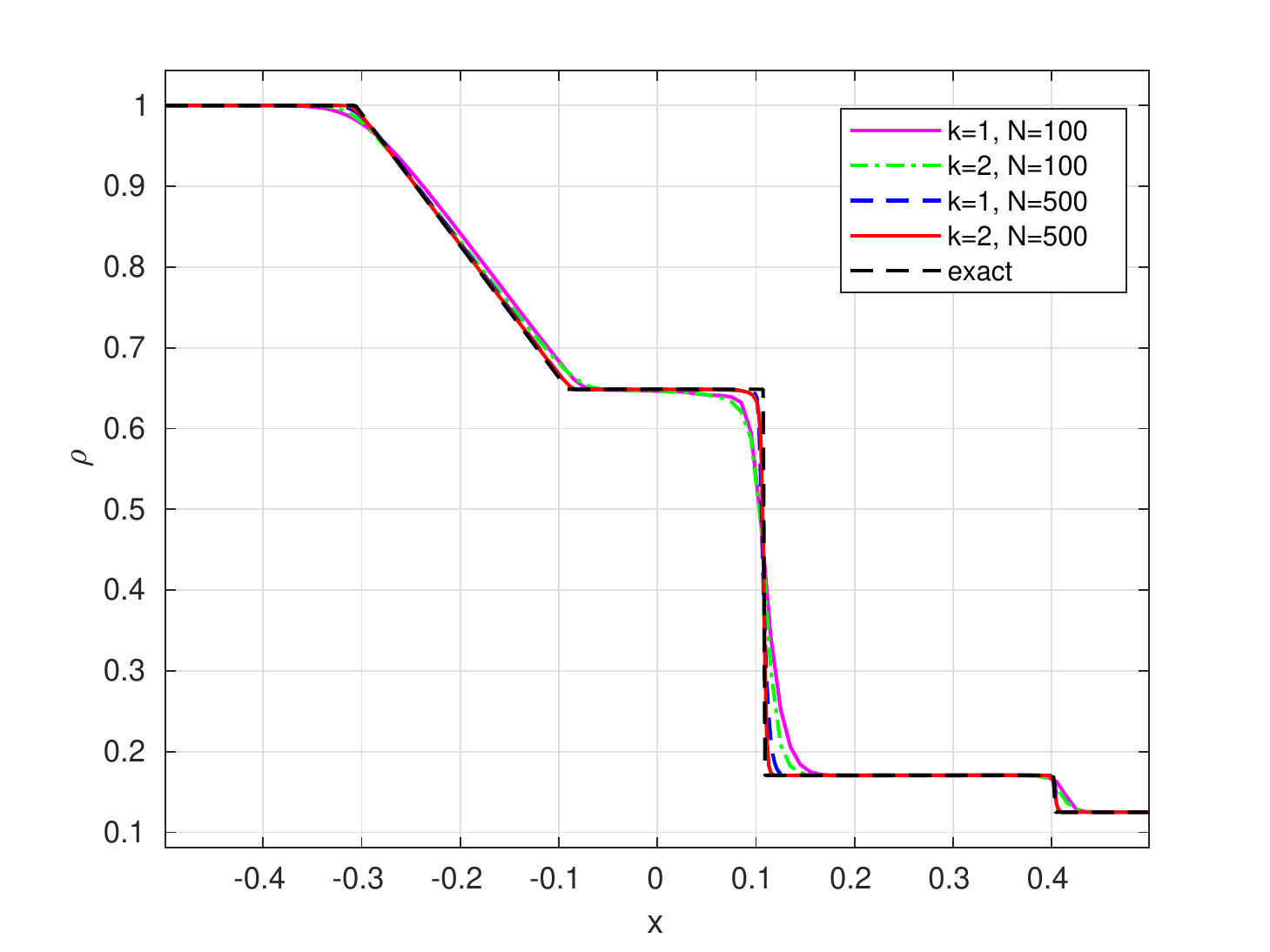}
		\caption{$\rho$}
	\end{subfigure}	
	\begin{subfigure}[b]{0.45\textwidth}
		\includegraphics[width=\textwidth]{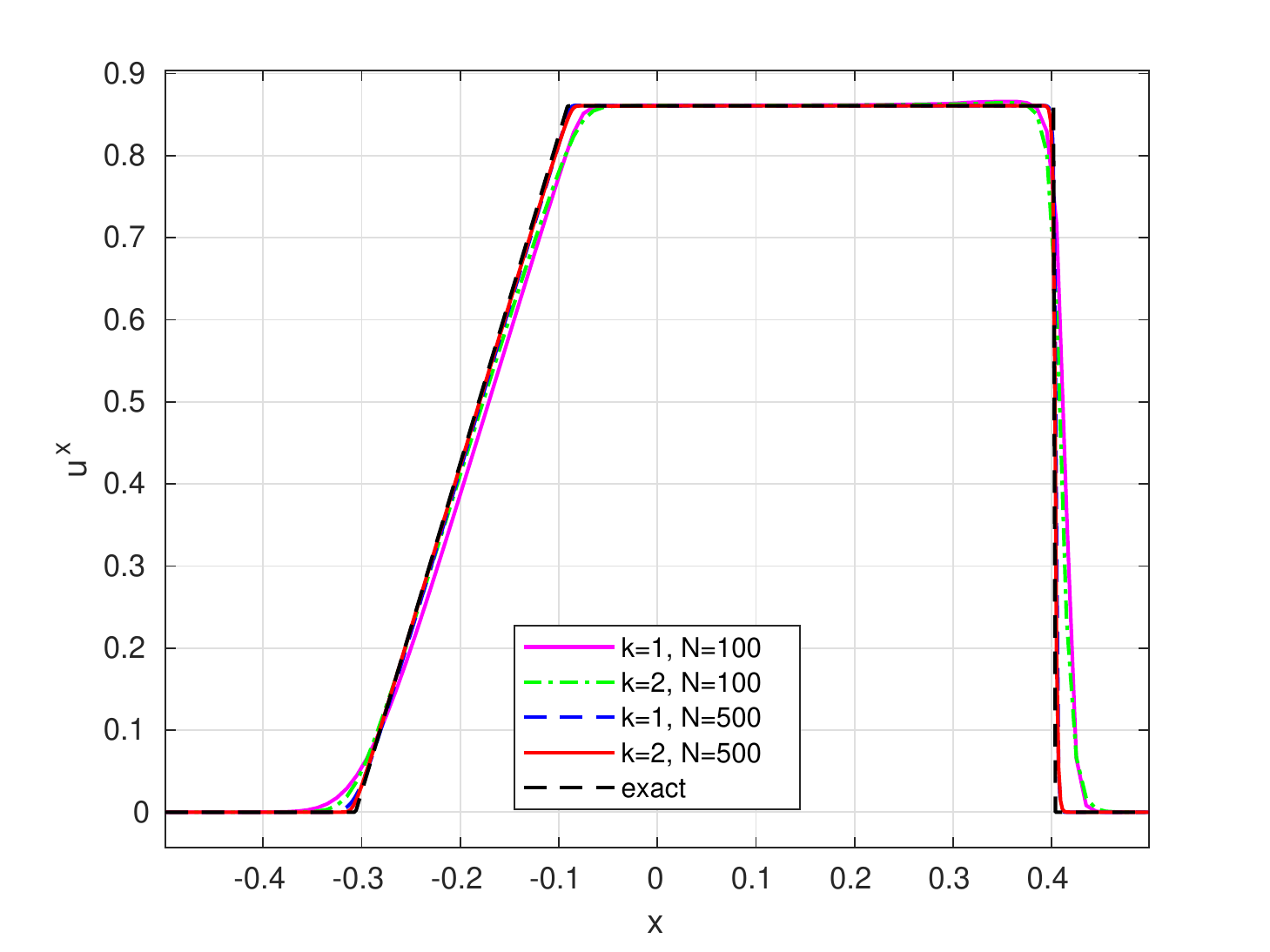}
		\caption{$v^x$}
	\end{subfigure}	
	\begin{subfigure}[b]{0.45\textwidth}
		\includegraphics[width=\textwidth]{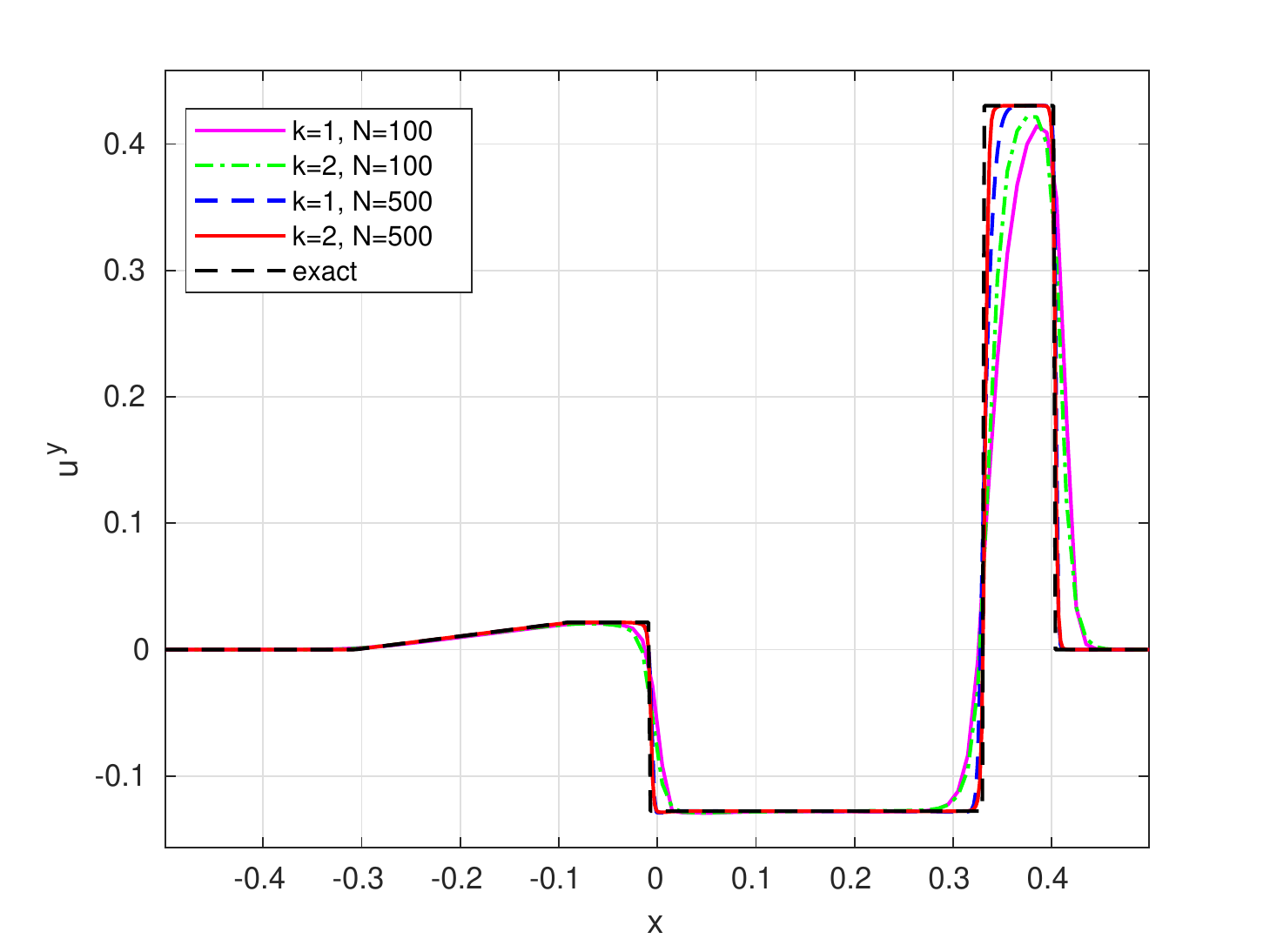}
		\caption{$v^y$}
	\end{subfigure}	
	\begin{subfigure}[b]{0.45\textwidth}
		\includegraphics[width=\textwidth]{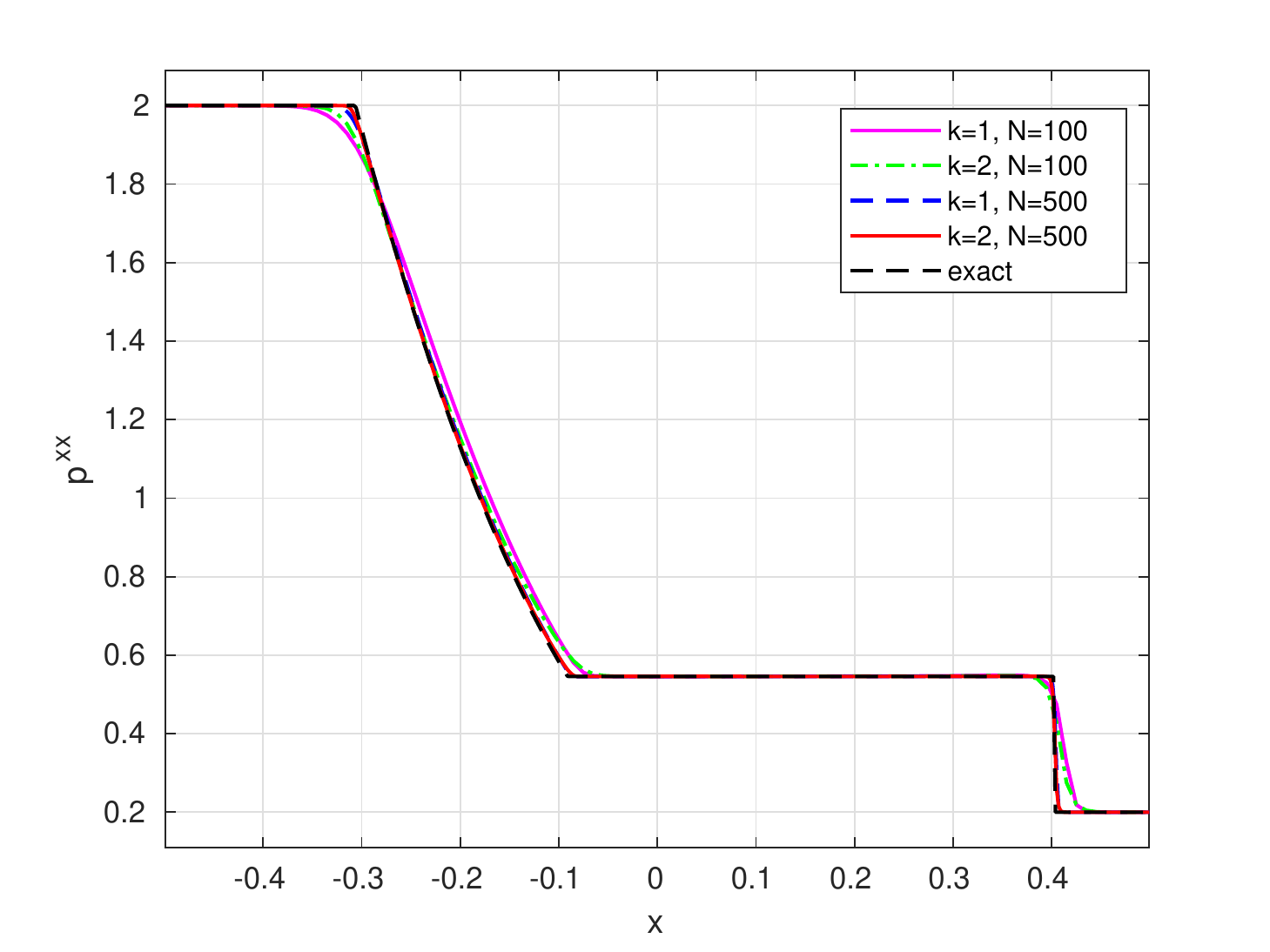}
		\caption{$p^{xx}$}
	\end{subfigure}	
	\begin{subfigure}[b]{0.45\textwidth}
		\includegraphics[width=\textwidth]{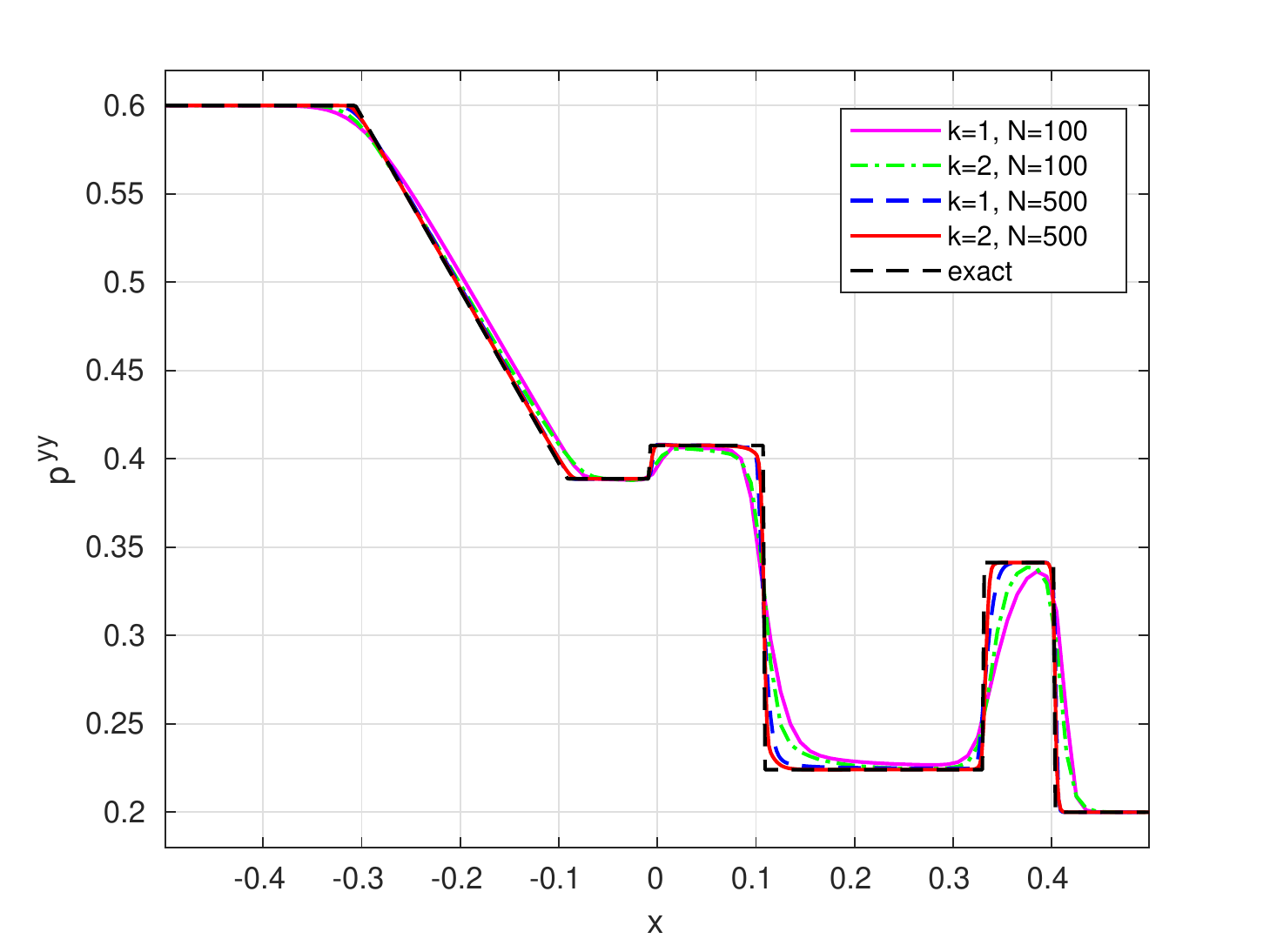}
		\caption{$p^{yy}$}
	\end{subfigure}	
	\begin{subfigure}[b]{0.45\textwidth}
	\includegraphics[width=\textwidth]{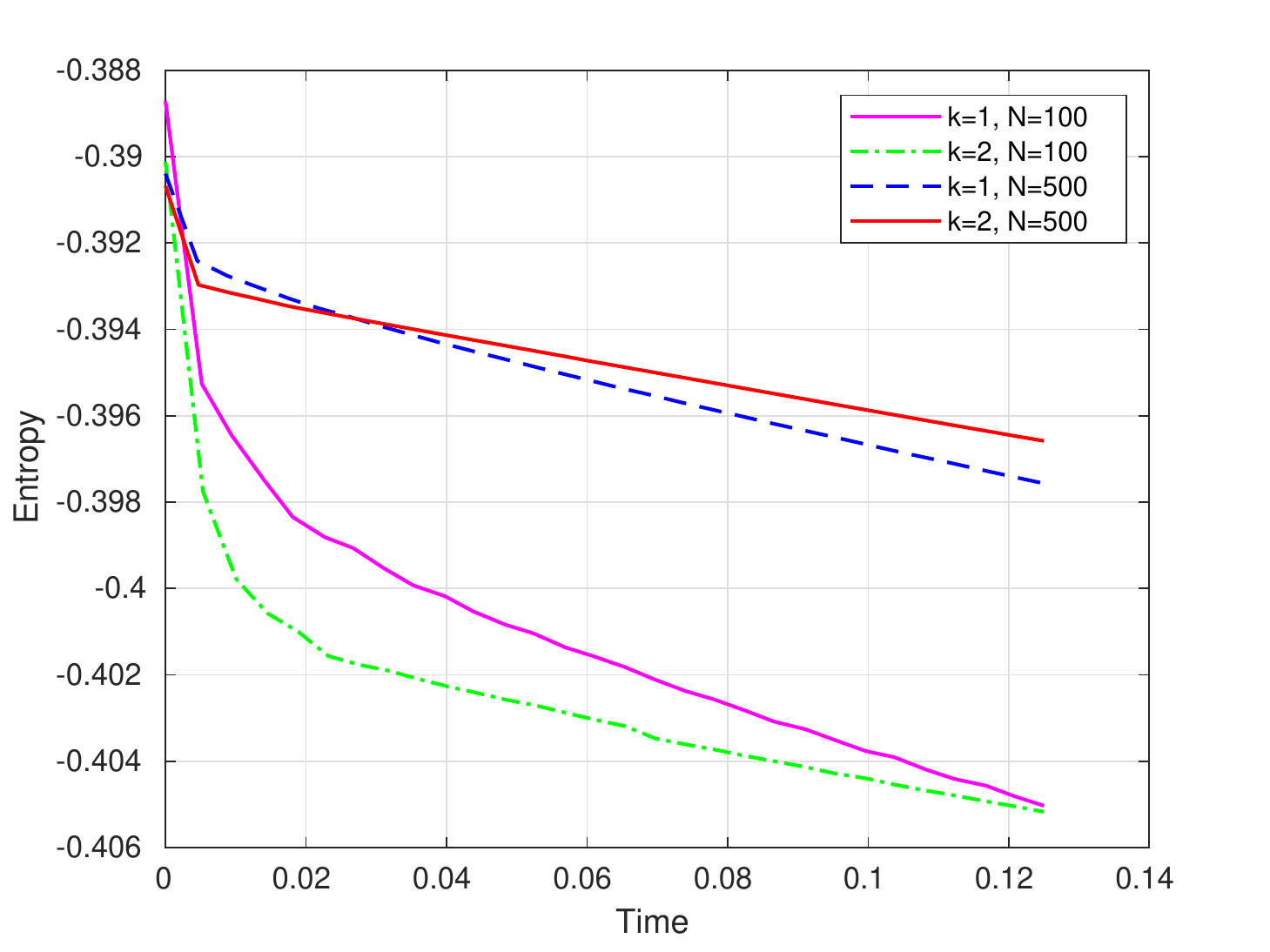}
	\caption{Evolution of total entropy}
\end{subfigure}	
	\caption{Test Problem 3 (Sod shock tube problem): Plot of density, velocity, pressure components and total entropy evolution for ESDG-O2(k=1) and ESDG-O3(k=2) using 100 and 500 cells.}
	\label{fig:pb2}
\end{figure}
\begin{figure}[htb!]
	\centering
	\begin{subfigure}[b]{0.45\textwidth}
		\includegraphics[width=\textwidth]{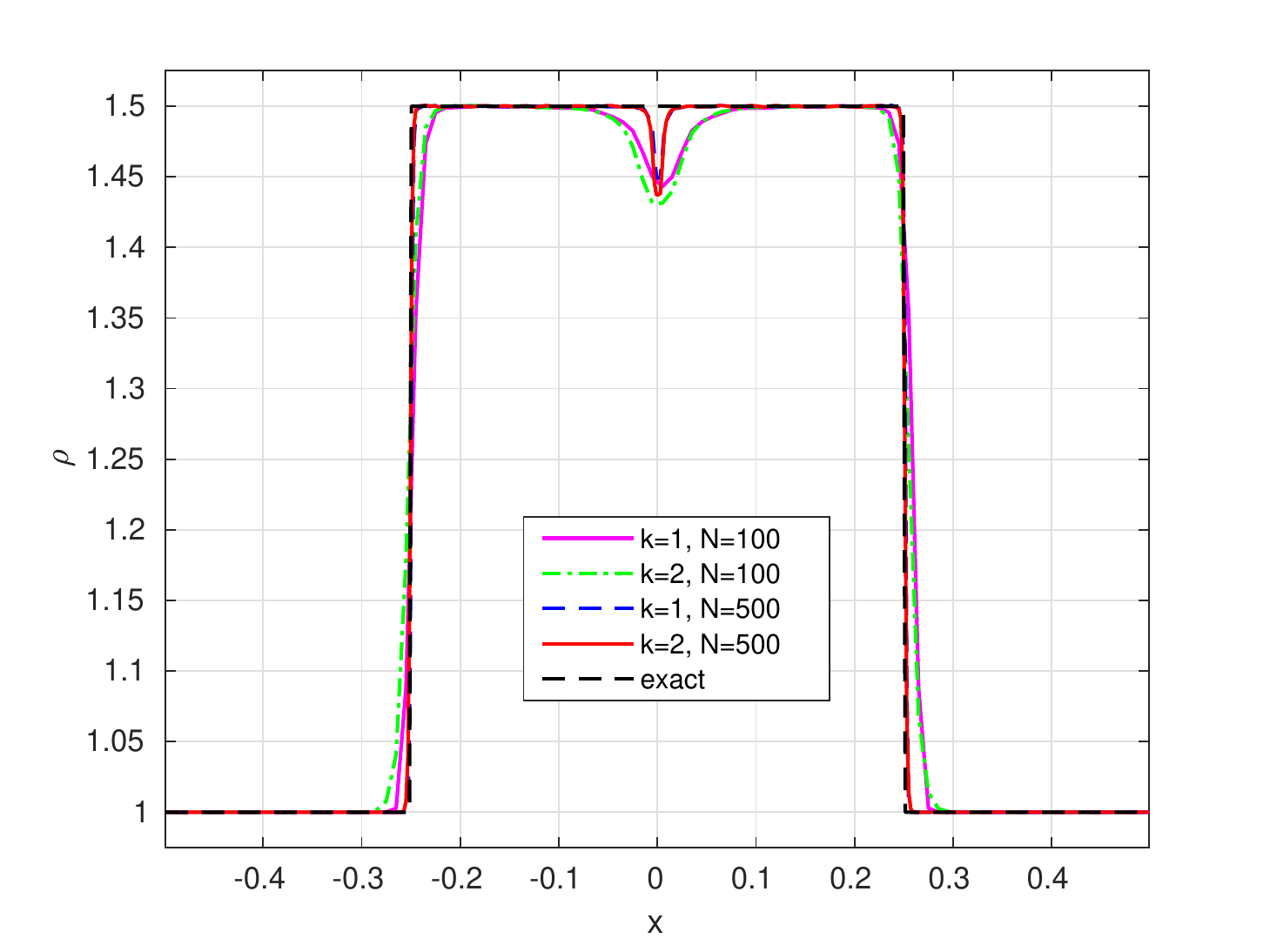}
		\caption{$\rho$}
	\end{subfigure}	
	\begin{subfigure}[b]{0.45\textwidth}
		\includegraphics[width=\textwidth]{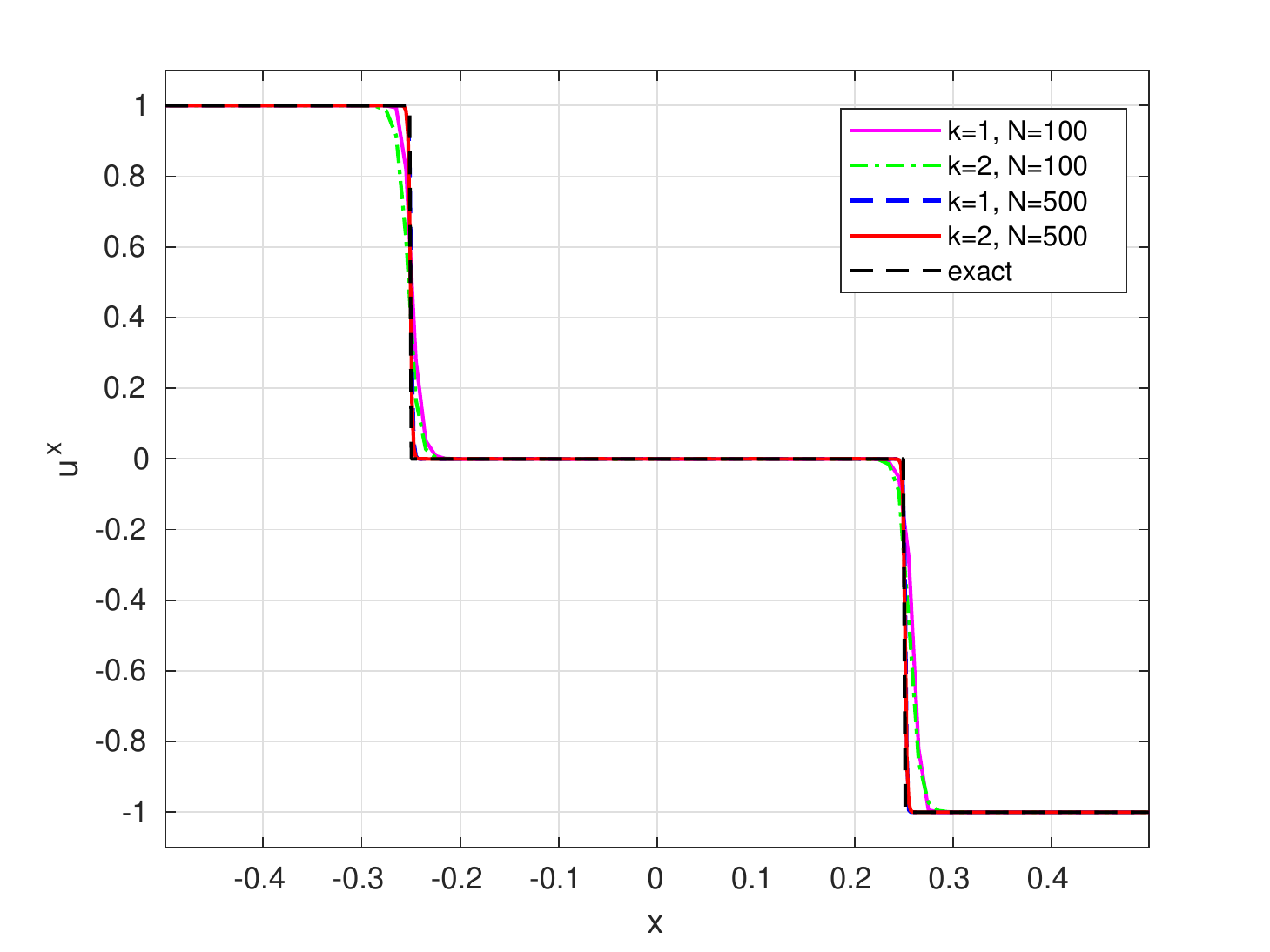}
		\caption{$v^x$}
	\end{subfigure}	
	\begin{subfigure}[b]{0.45\textwidth}
		\includegraphics[width=\textwidth]{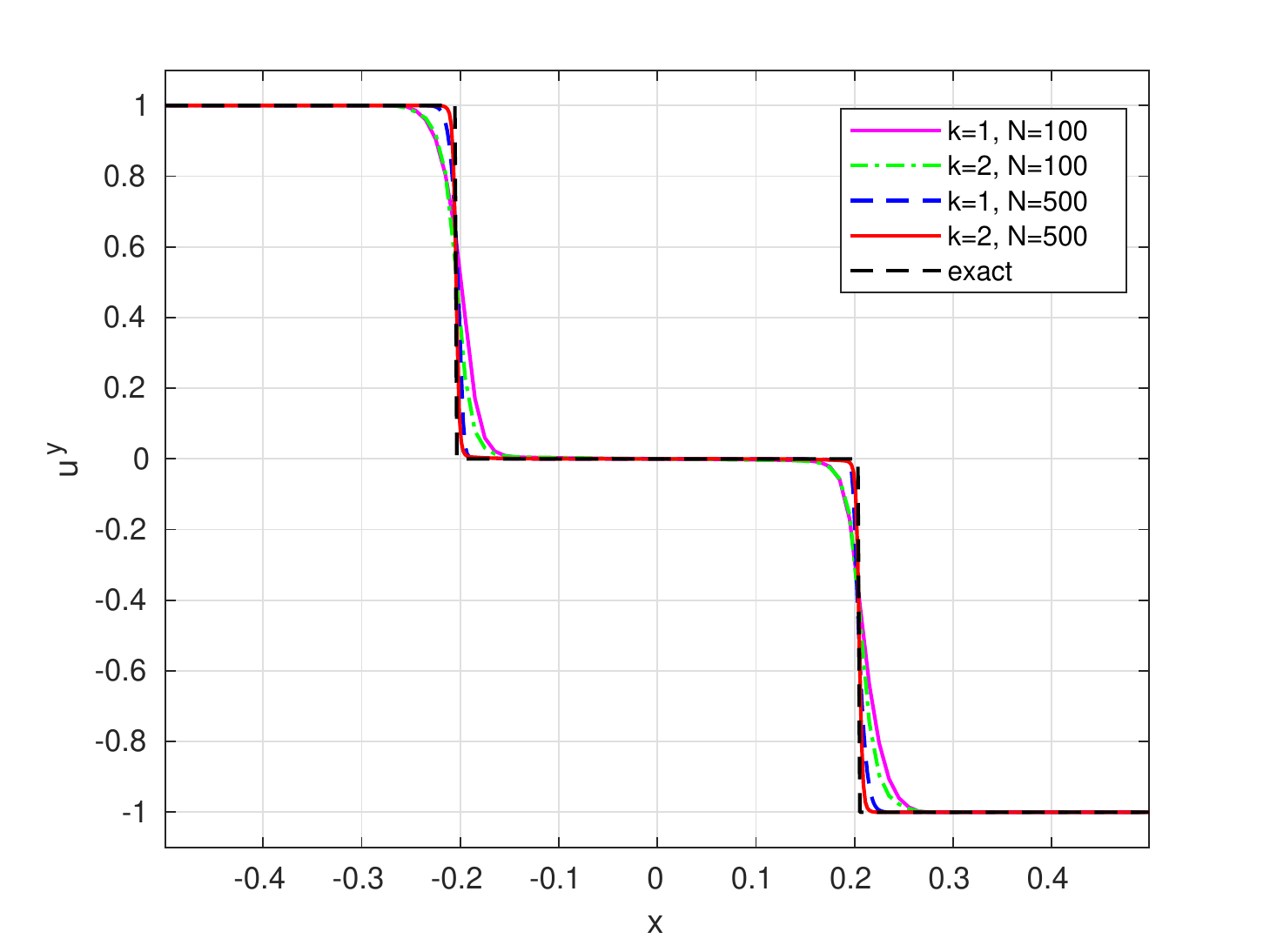}
		\caption{$v^y$}
	\end{subfigure}	
	\begin{subfigure}[b]{0.45\textwidth}
		\includegraphics[width=\textwidth]{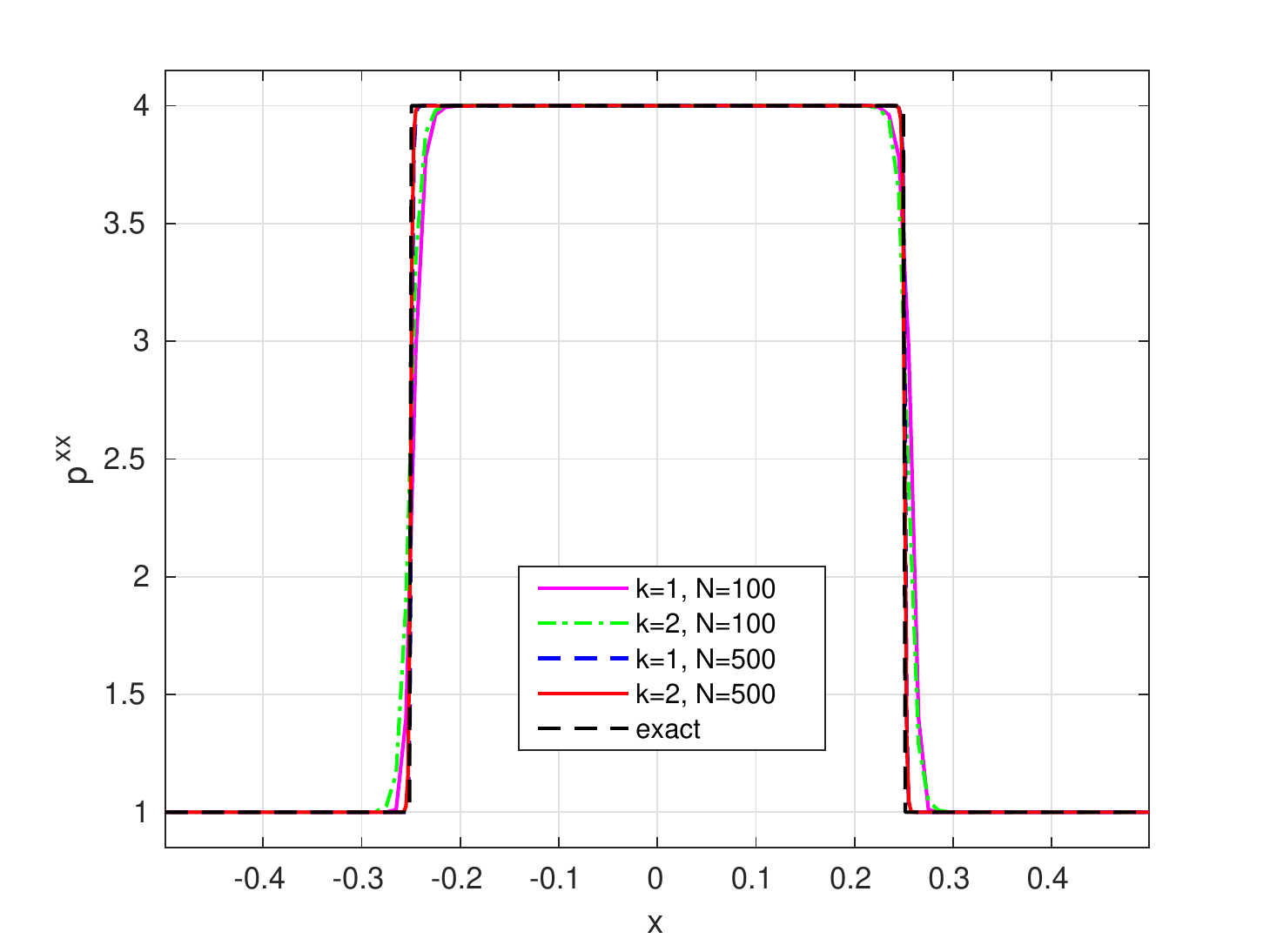}
		\caption{$p^{xx}$}
	\end{subfigure}	
	\begin{subfigure}[b]{0.45\textwidth}
		\includegraphics[width=\textwidth]{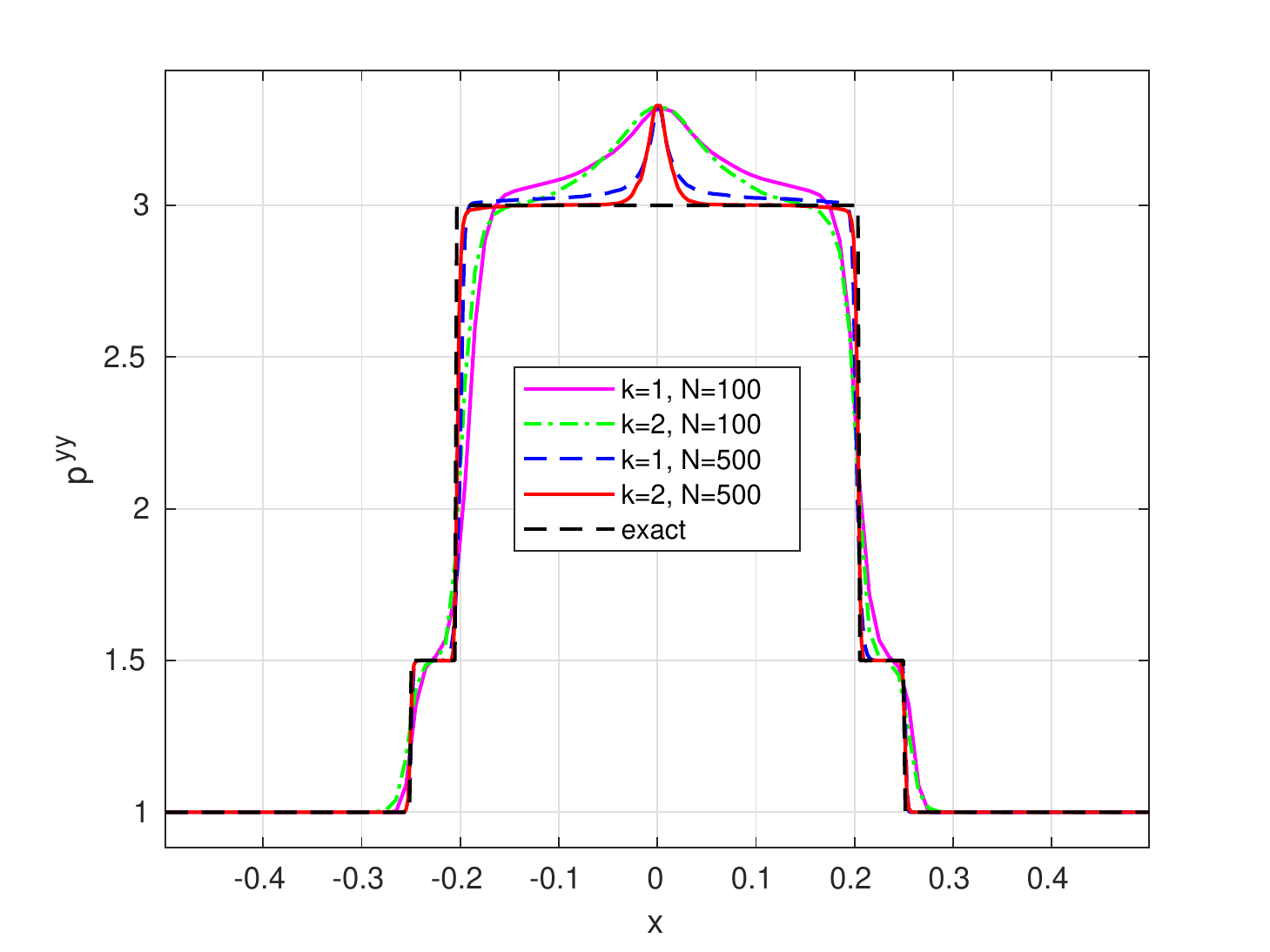}
		\caption{$p^{yy}$}
			\end{subfigure}	
	\begin{subfigure}[b]{0.45\textwidth}
			\includegraphics[width=\textwidth]{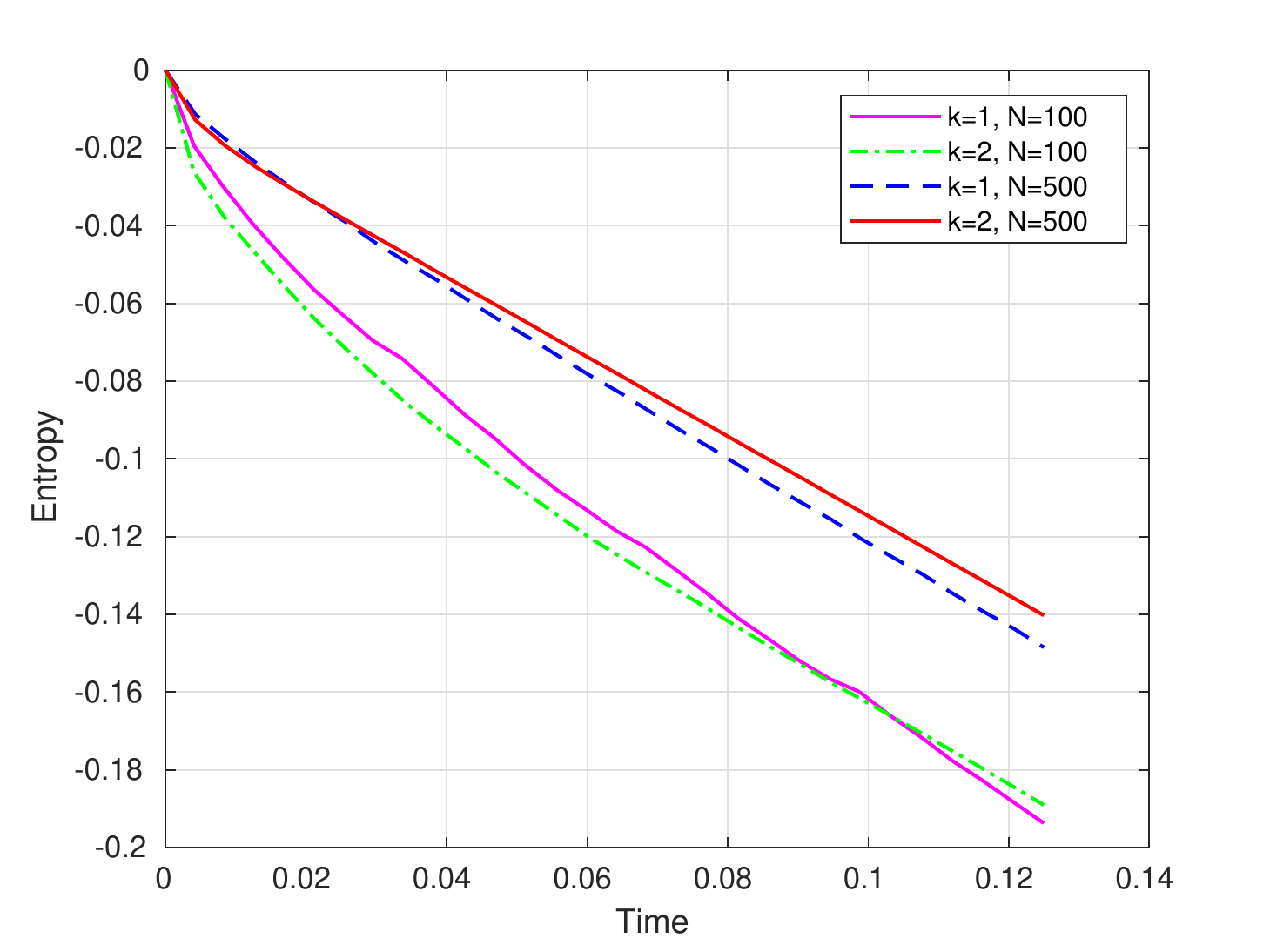}
			\caption{Evolution of total entropy}
	\end{subfigure}	
	\caption{Test Problem 4 (Two shock waves): Plot of density, velocity, pressure components and total entropy evolution for ESDG-O2(k=1) and ESDG-O3(k=2) using 100 and 500 cells.}
	\label{fig:pb1}
\end{figure}
\begin{figure}[htb!]
	\centering
	\begin{subfigure}[b]{0.45\textwidth}
		\includegraphics[width=\textwidth]{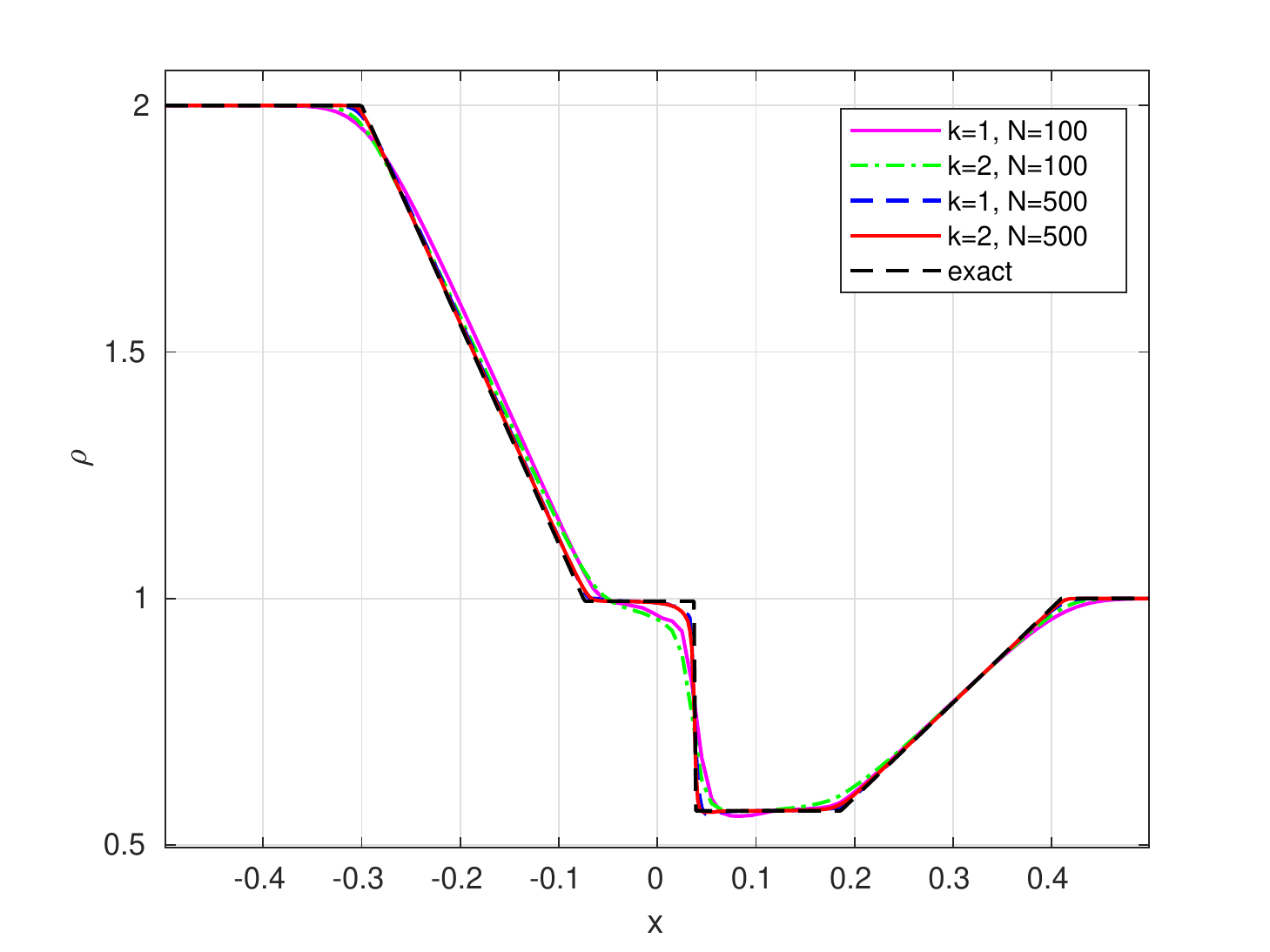}
		\caption{$\rho$}
	\end{subfigure}	
	\begin{subfigure}[b]{0.45\textwidth}
		\includegraphics[width=\textwidth]{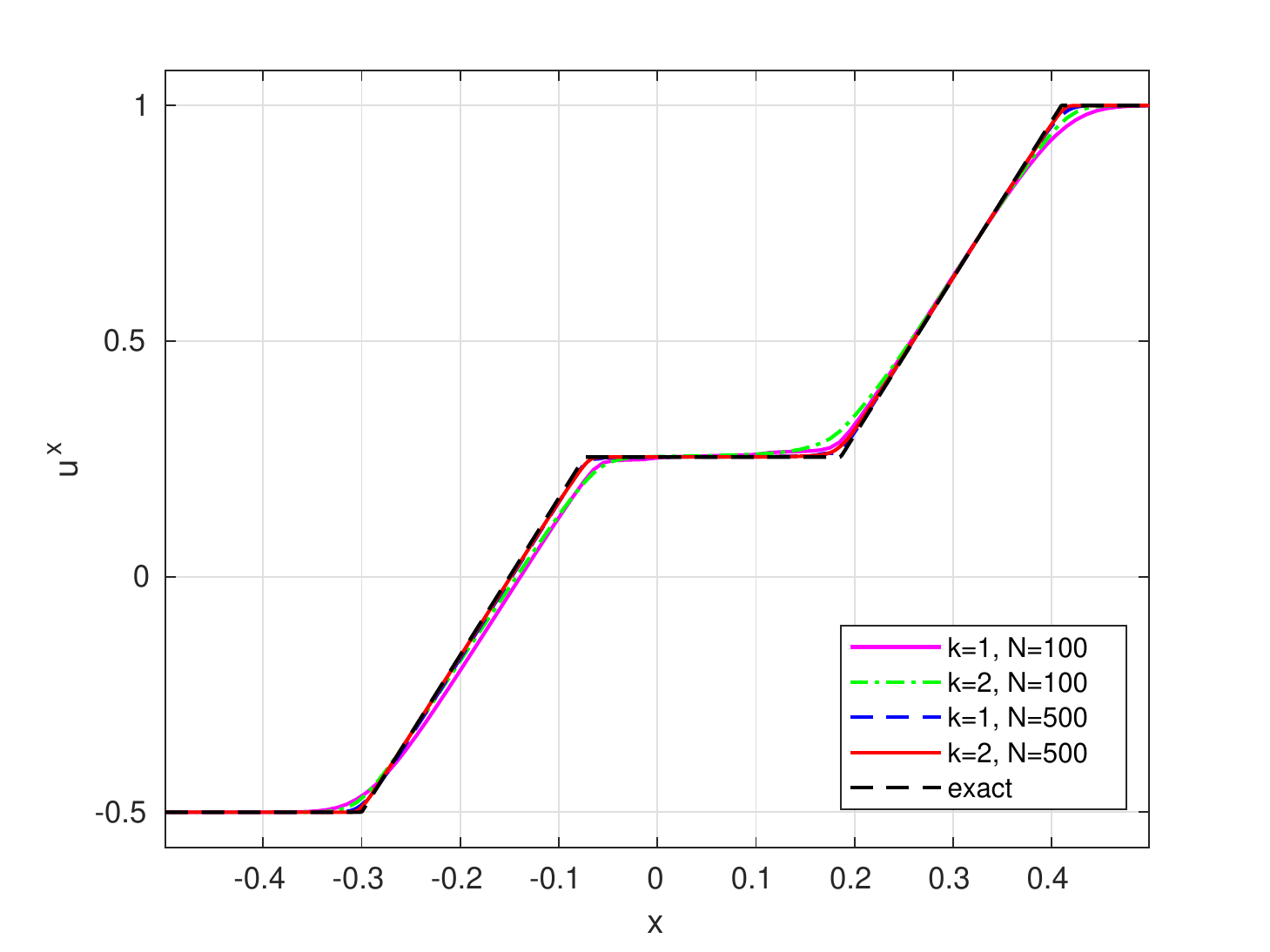}
		\caption{$v^x$}
	\end{subfigure}	
	\begin{subfigure}[b]{0.45\textwidth}
		\includegraphics[width=\textwidth]{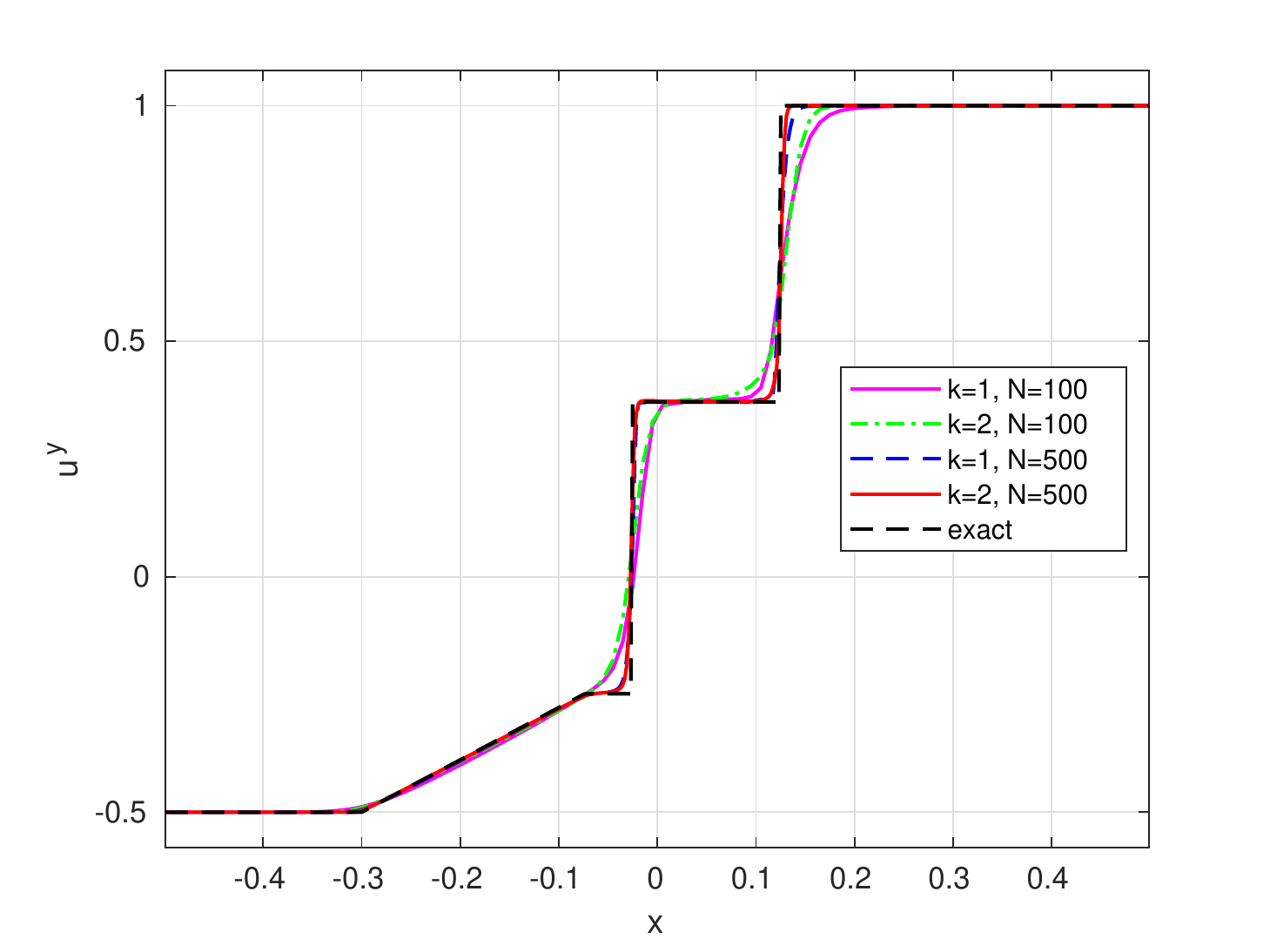}
		\caption{$v^y$}
	\end{subfigure}	
	\begin{subfigure}[b]{0.45\textwidth}
		\includegraphics[width=\textwidth]{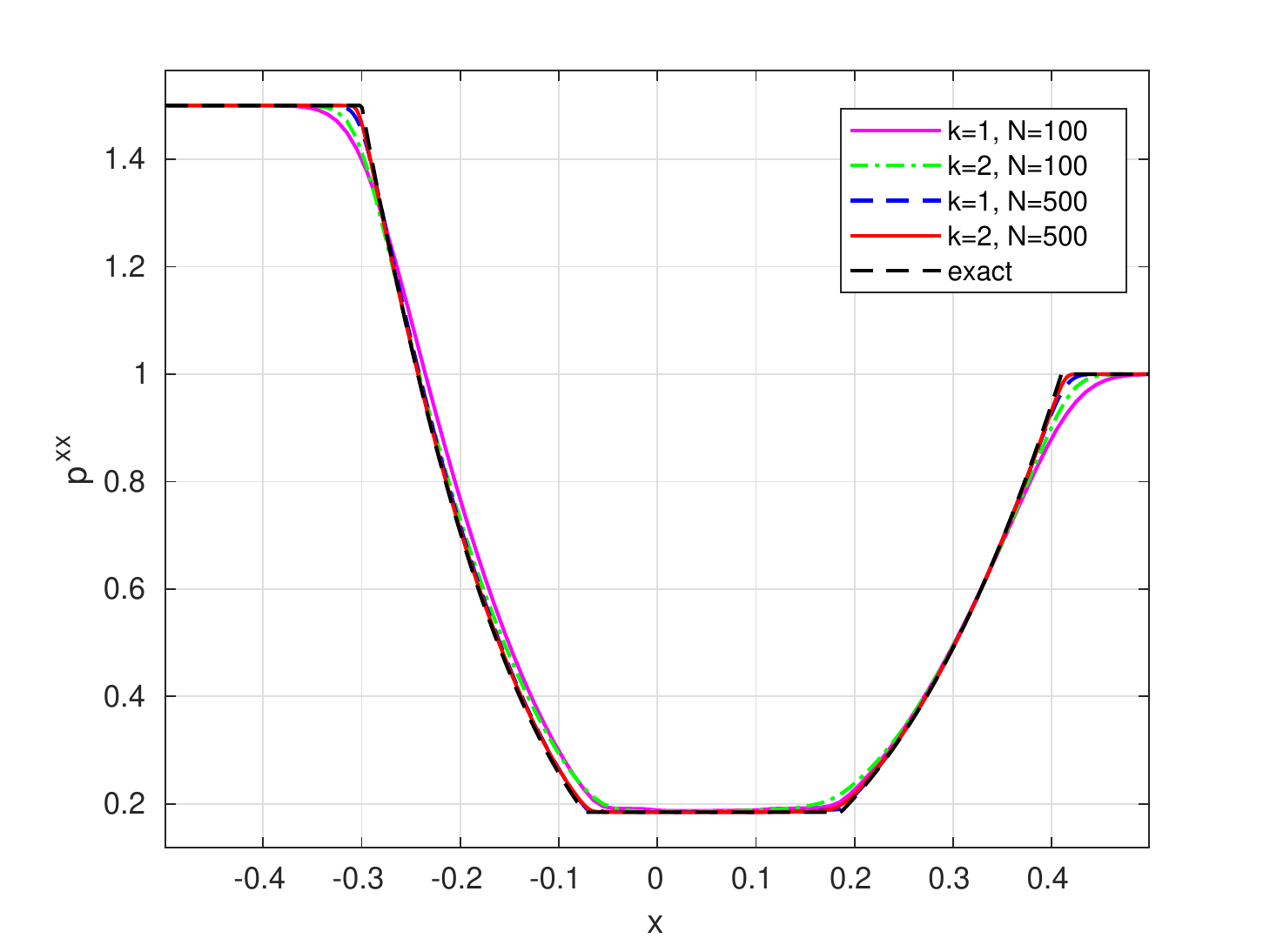}
		\caption{$p^{xx}$}
	\end{subfigure}	
	\begin{subfigure}[b]{0.45\textwidth}
		\includegraphics[width=\textwidth]{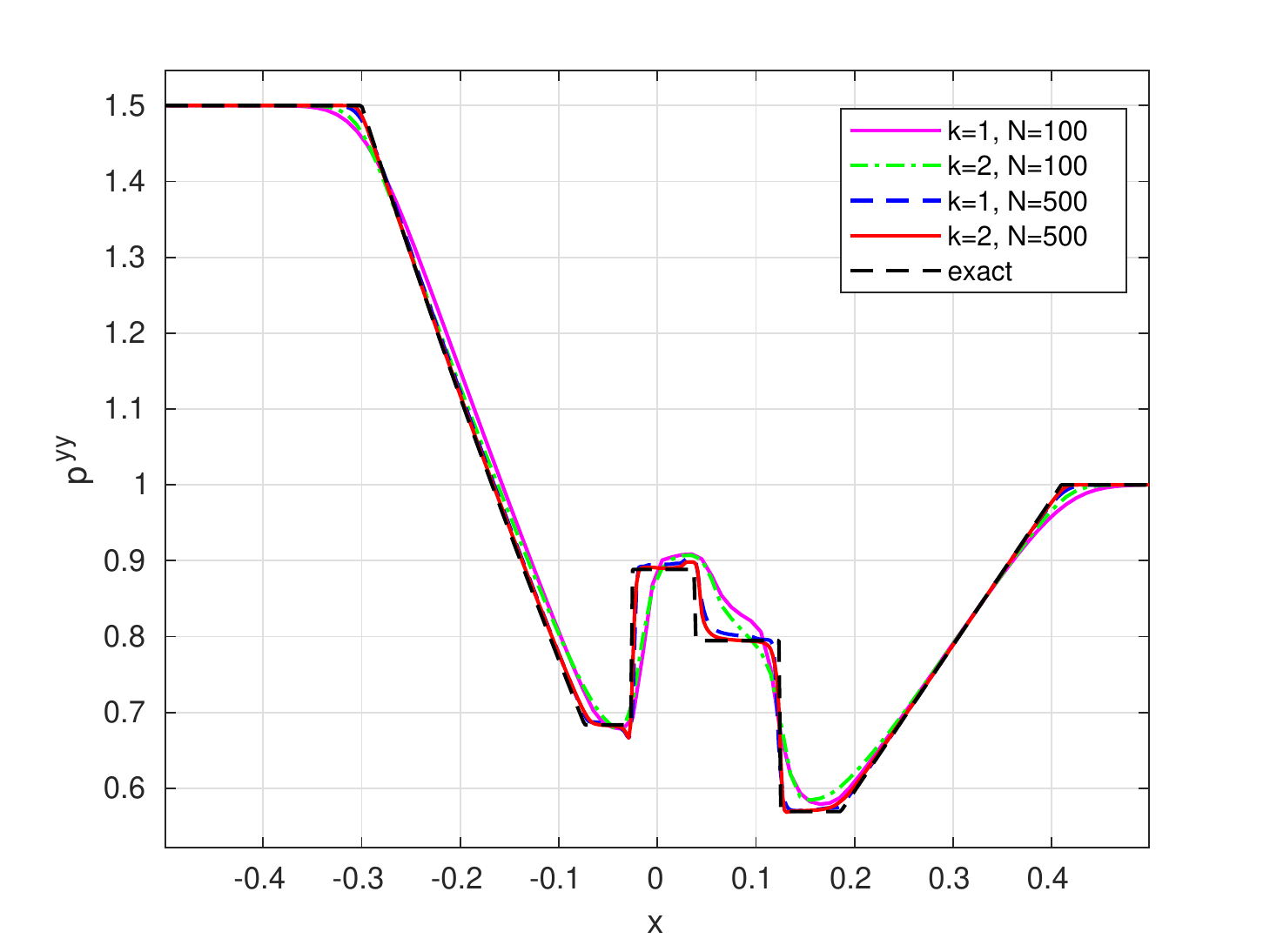}
		\caption{$p^{yy}$}
	\end{subfigure}	
	\begin{subfigure}[b]{0.45\textwidth}
	\includegraphics[width=\textwidth]{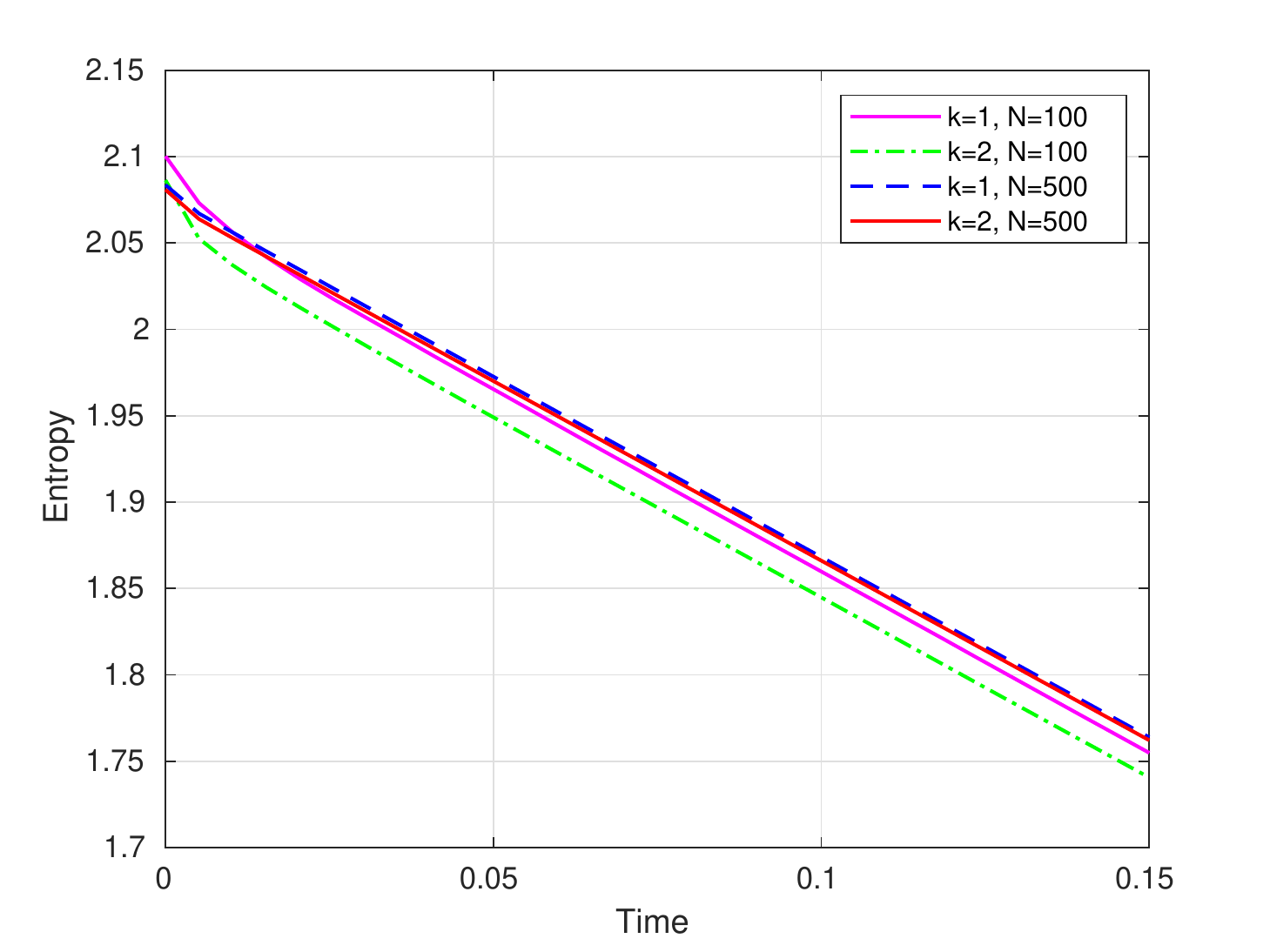}
	\caption{Evolution of total entropy}
\end{subfigure}	
	\caption{Test Problem 5 (Two rarefaction waves): Plot of density, velocity, pressure components and total entropy evolution for ESDG-O2(k=1) and ESDG-O3(k=2) using 100 and 500 cells.}
	\label{fig:pb3}
\end{figure}
\begin{example}[Sod shock tube Riemann problem:] We consider the Riemann problem on the domain n $[-0.5,\,0.5]$ with initial discontinuity at $x=0$. The left and right states are given in Table \ref{tab:sod}. We use outflow boundary conditions, and the solutions are presented at time of $T=0.125$. The exact solution of the problem contains a shock, contact wave, and rarefaction wave. We do not need bound preserving limiter for this test case. TVBM limiter was used to control the oscillations.
	\begin{table}[htb!]
		\centering
		\begin{tabular}{l|cccccc}
			\hline
			States & $\rho$ & $v^x$ & $v^y$ & $p^{xx}$ & $p^{xy}$ & $p^{yy}$ \\ 
			\hline
			Left & 1 & 0 & 0 & 2 & 0.05 & 0.6 \\
			Right &0.125& 0 &0&0.2&0.1&0.2\\
			\hline
		\end{tabular}
		\caption{Test Problem 3: Initial conditions for Sod shock tube problem}
		\label{tab:sod}
	\end{table}
	The numerical solutions are presented in Figure    \ref{fig:pb2}, using 100 and 500 cells for ESDG-O2(k=1) and ESDG-O3(k=2) schemes. We have plotted density, velocity and the pressure components. We observe that both schemes are able to resolve all the waves and as expected ESDG-O3 is more accurate than the ESDG-O2 scheme. Furthermore, the use of  finer grids $N=500$ significantly improves the results. We also note that the results are comparable with \cite{meena2017positivity,sen2018entropy}.
	
	We have also plotted the time evolution of total entropy. We note that at $500$, entropy decay is lower than at the $100$ cells. Furthermore, both ESDG-O2 and ESDG-O3 having similar decay, with ESDG-O2 having slightly lower decay at 100 cells when compared to ESDG-O3. However, at 500 cells, ESDG-O3 having lower entropy decay than the ESDG-O2.
\end{example}

\begin{example}[Two shock waves]
	This is another Riemann problem (see\cite{meena2017positivity,meena_positivity-preserving_2020,berthon_numerical_2006}) where the exact solution contains two shock waves moving away from each other. We consider the domain $[-0.5,\,0.5]$ with outflow boundary conditions. The initial discontinuity is centered at $x=0$ separating the left and right states given in Table \ref{tab:twoshock}. Bound preserving limiter is not required for this test. Computational results are plotted at final time $T=0.125$. 
	\begin{table}[htb!]
		\centering
		\begin{tabular}{l|cccccc}
			\hline
			States & $\rho$ & $v^x$ & $v^y$ & $p^{xx}$ & $p^{xy}$ & $p^{yy}$ \\ 
			\hline
			Left & 1 & 1  & 1 & 1 & 0 & 1 \\
			Right& 1 & -1 &-1 & 1 & 0 & 1\\
			\hline
		\end{tabular}
		\caption{Test Problem 4: Initial conditions for the two shock waves problem}
		\label{tab:twoshock}
	\end{table}
	Numerical results are plotted in Figure \ref{fig:pb1} for ESDG-O2(k=1) and ESDG-O3(k=2) at resolutions of 100 and 500 cells. We have again plotted density, velocity and pressure components. We note that both the schemes are able to capture the shocks and results improve significantly when we use finer mesh of 500 cells. In addition, ESDG-O3 is more accurate than the ESDG-O2. Also, results are comparable to those presented in \cite{meena2017positivity,meena_positivity-preserving_2020}. 
	
	From the total entropy decay plot, we observe that entropy decay decreases significantly when the resolution is increased from 100 to 500 cells. Furthermore, entropy decay of the ESDG-O3 scheme is lower than the ESDG-O2 scheme.
\end{example}

\begin{example}[Two rarefaction waves]
	In this test case, we consider a Riemann problem, where the exact solution contains two rarefaction waves. The test case is set in the domain $[-0.5,0.5]$ with the initial jump at $x=0$ is separating two states given in Table \ref{tab:tworare}. Similar to the last two cases, we use outflow boundary conditions.  The solutions are computed until time $T=0.15$ using 100 and 500 cells.
	\begin{table}[htb!]
		\centering
		\begin{tabular}{l|cccccc}
			\hline
			States & $\rho$ & $v^x$ & $v^y$ & $p^{xx}$ & $p^{xy}$ & $p^{yy}$ \\ 
			\hline
			Left & 2 & -0.5  & -0.5 & 1.5 & 0.5 & 1.5 \\
			Right& 1 & 1 &1 & 1 & 0 & 1\\
			\hline
		\end{tabular}
		\caption{Test Problem 5: Initial conditions for two rarefaction waves problem}
		\label{tab:tworare}
	\end{table}

	The numerical solutions are presented in Figure \ref{fig:pb3} for both schemes. We observe the similar performance of the schemes as in the last two test cases, with ESDG-O3 more accurate than the ESDG-O2 scheme. Furthermore, both of the schemes are able to capture all the waves. In addition, numerical results are consistent with results in \cite{berthon_numerical_2006,meena2017positivity,meena_positivity-preserving_2020}. We also note from the total entropy decay plot that the schemes have similar entropy decay.
\end{example}    

\begin{example}[Near vacuum state] This test case is designed to test robustness of the schemes at low density and pressure (see \cite{meena2017positivity,meena_positivity-preserving_2020}). The domain  $[-0.5,\,0.5]$ consists of left and right states, given in Table \ref{tab:tworare}, separated at $x=0$.  We have simulated the solution till time $T=0.05$  using outflow boundary conditions. Even though we have low density and pressure areas, we have not used bound preserving limiter.
	\begin{table}[htb!]
		\centering
		\begin{tabular}{l|cccccc}
			\hline
			States & $\rho$ & $v^x$ & $v^y$ & $p^{xx}$ & $p^{xy}$ & $p^{yy}$ \\ 
			\hline
			Left & 1 & -5  & 0 & 2 & 0 & 2 \\
			Right& 1 &  5  & 0 & 2 & 0 & 2\\
			\hline
		\end{tabular}
		\caption{Test Problem 6: Initial conditions for the near vacuum state problem}
		\label{tab:nearvacuum}
	\end{table}
	\begin{figure}[htb!]
		\centering
		\begin{subfigure}[b]{0.45\textwidth}
			\includegraphics[width=\textwidth]{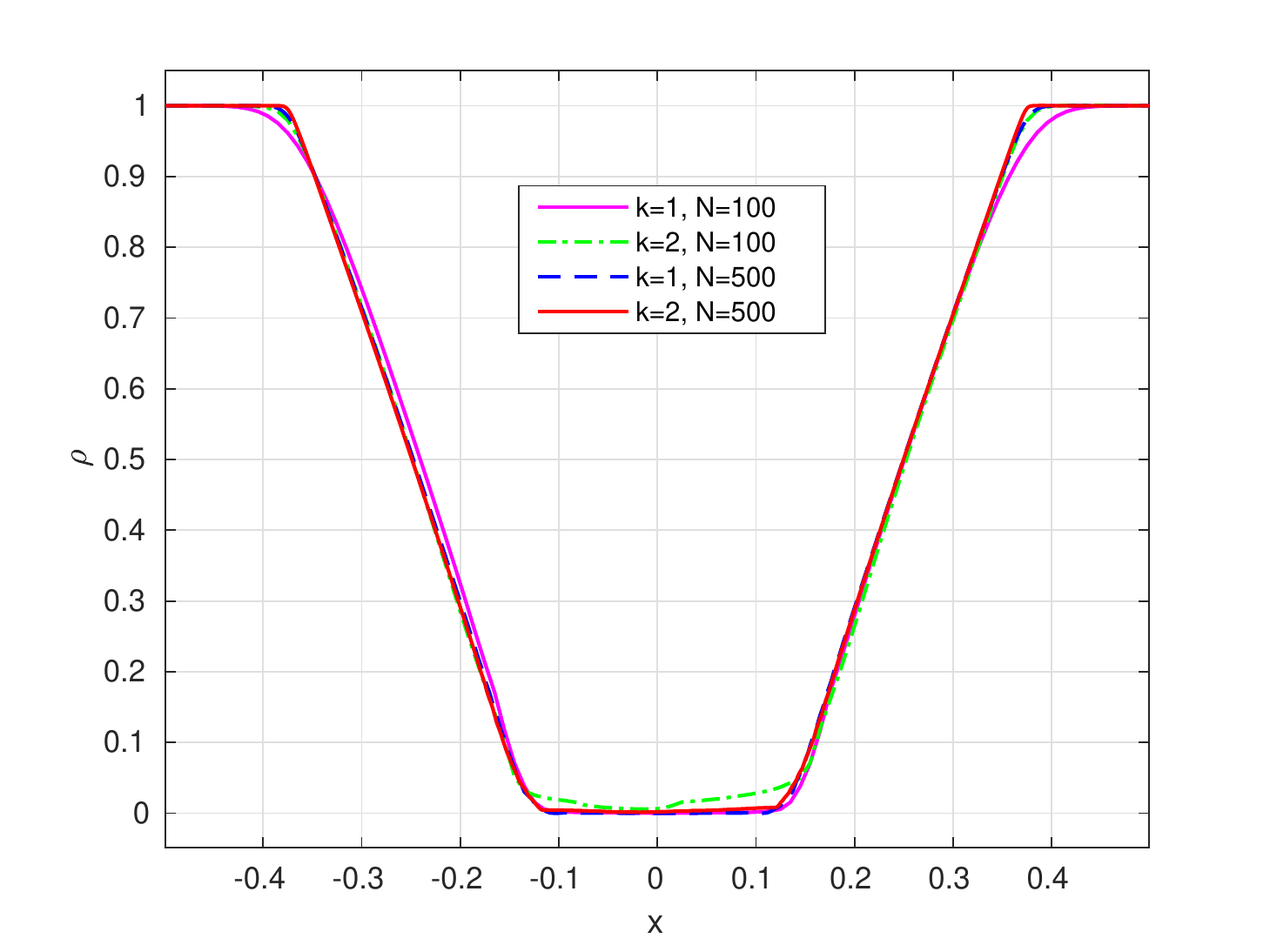}
			\caption{$\rho$}
		\end{subfigure}    
		\begin{subfigure}[b]{0.45\textwidth}
			\includegraphics[width=\textwidth]{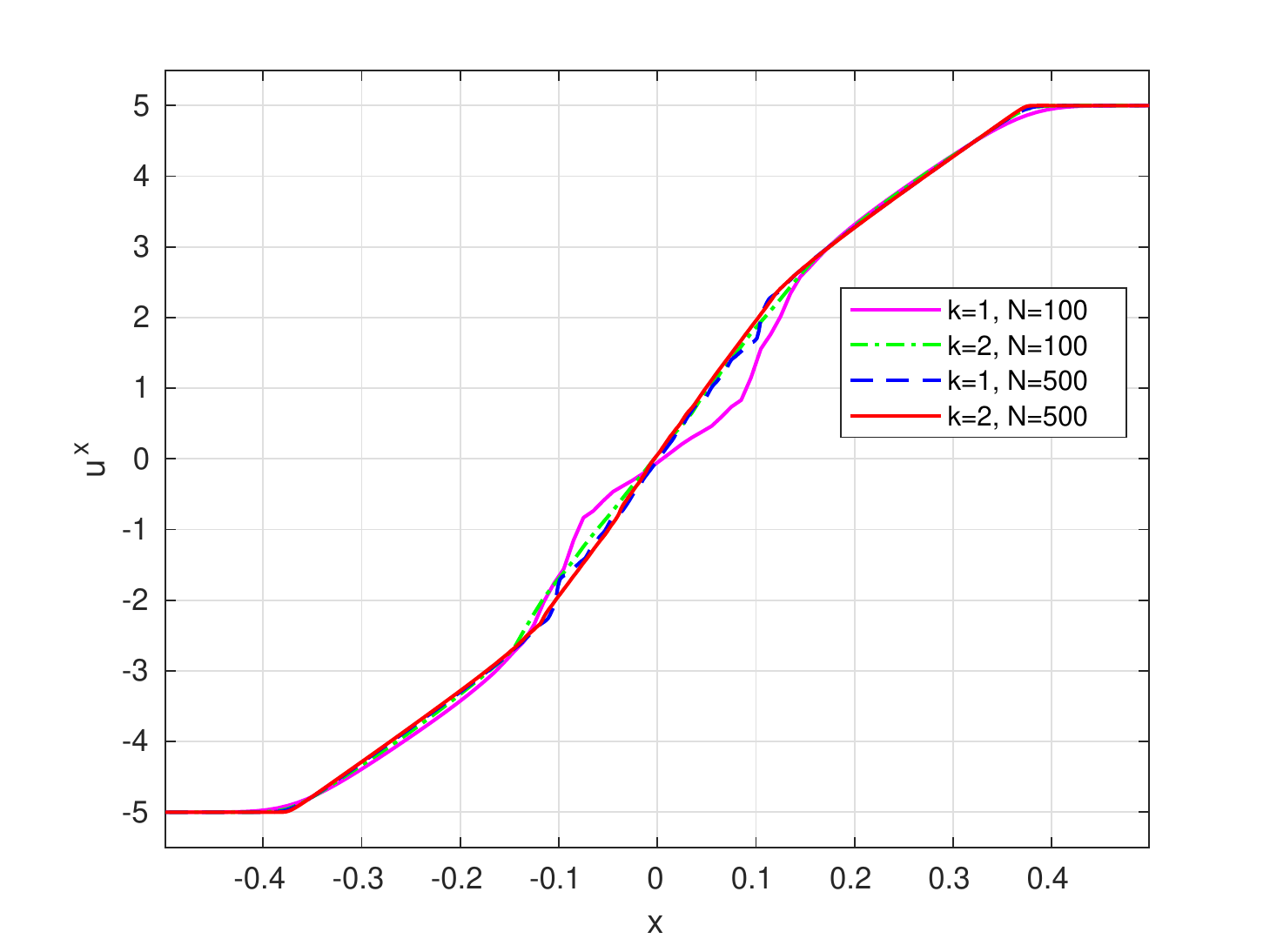}
			\caption{$v^x$}
		\end{subfigure}    
		\begin{subfigure}[b]{0.45\textwidth}
			\includegraphics[width=\textwidth]{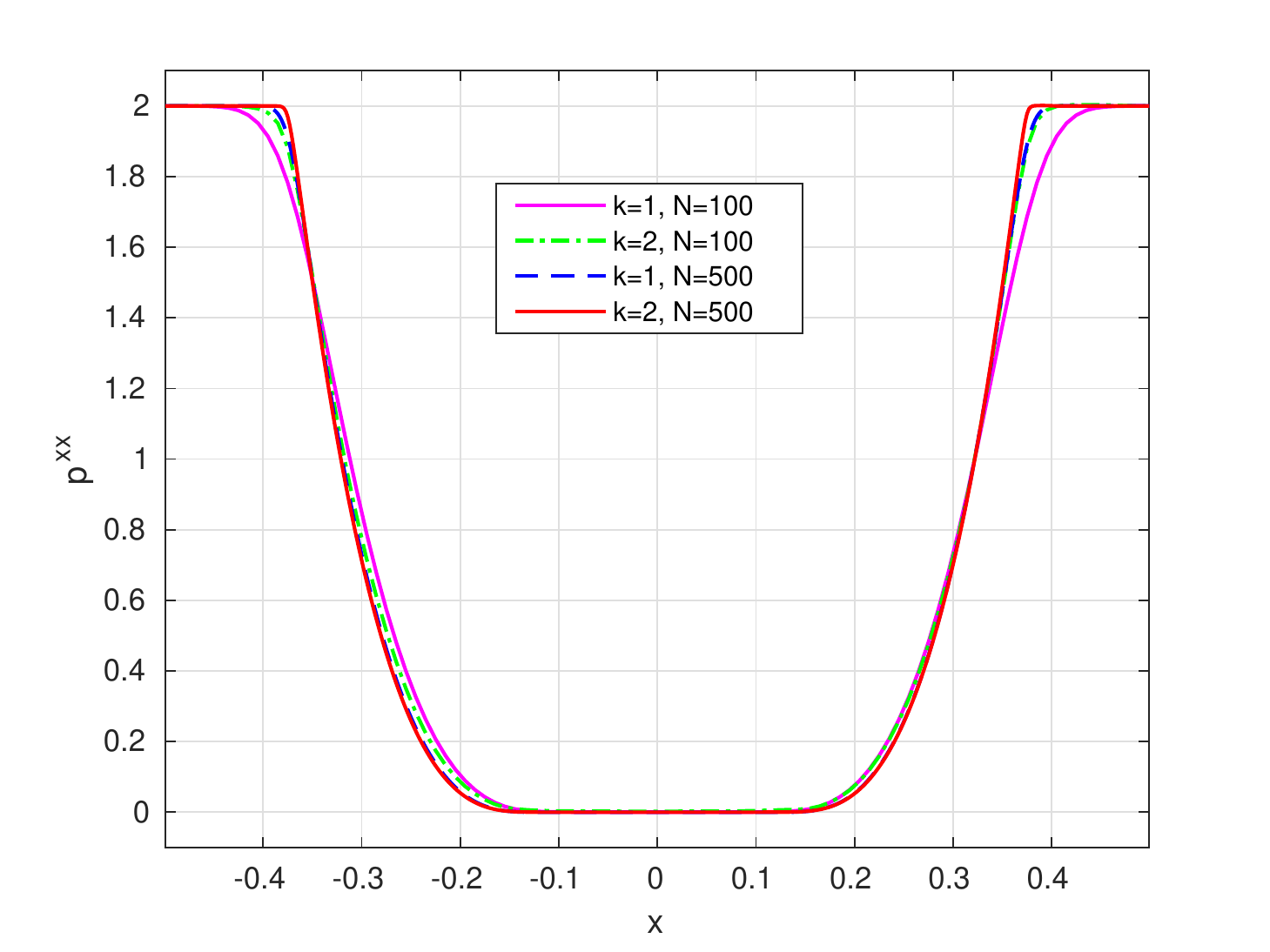}
			\caption{$p^{xx}$}
		\end{subfigure}    
		\begin{subfigure}[b]{0.45\textwidth}
			\includegraphics[width=\textwidth]{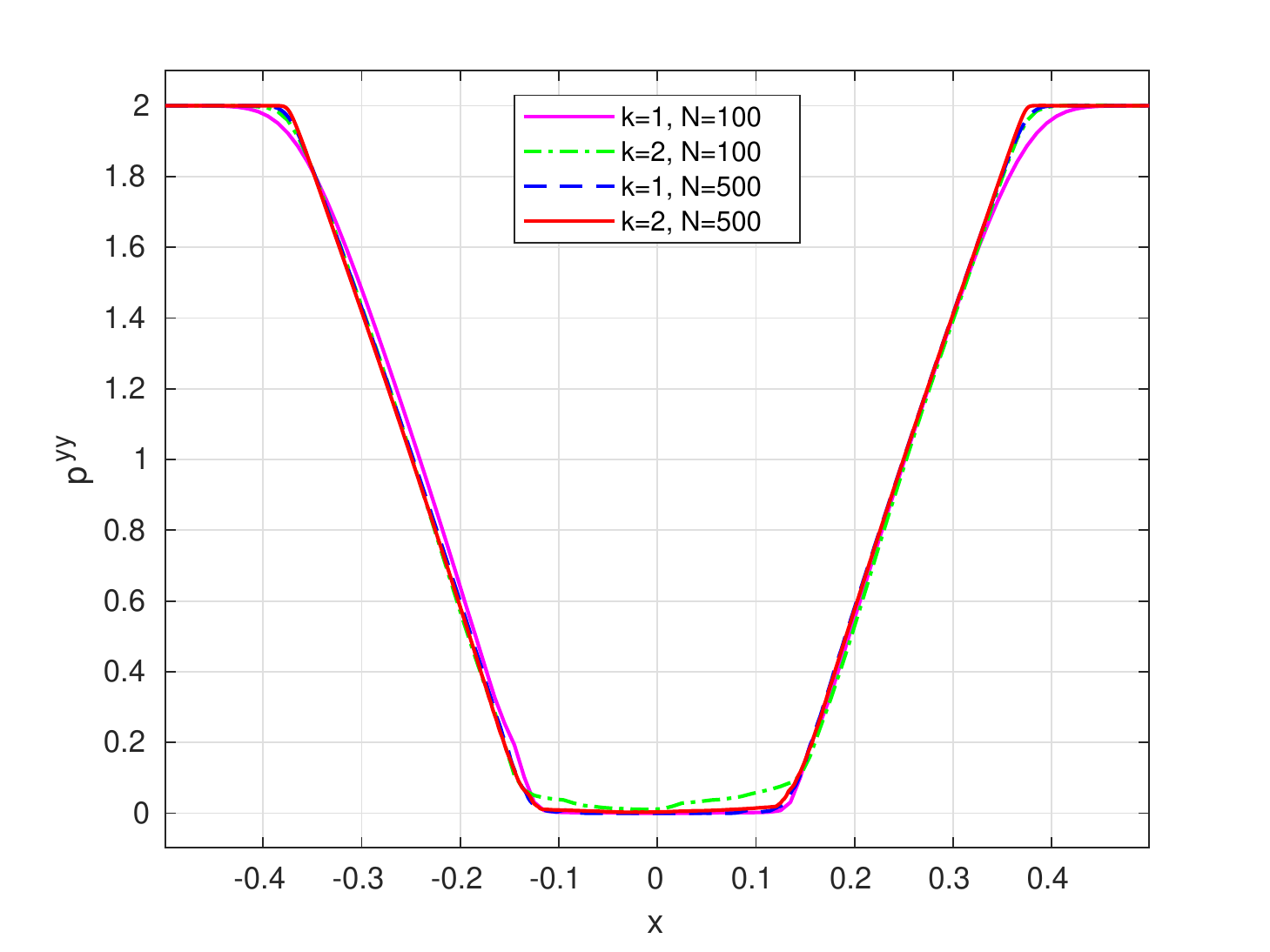}
			\caption{$p^{yy}$}
		\end{subfigure}    
		\caption{Test Problem 6 (Near vacuum state): Plot of density, velocity and pressure components for ESDG-O2(k=1) and ESDG-O3(k=2) using 100 and 500 cells.}
		\label{fig:pb4}
	\end{figure}
	
	Numerical results are plotted in Figure \ref{fig:pb4} for both schemes. We observe that both the schemes able to capture two outgoing rarefaction waves, and both are stable. Also, at the finer resolution of $500$ cells, the results are highly accurate and comparable to those presented in \cite{meena2017positivity,meena_positivity-preserving_2020}.
\end{example}

\begin{figure}[htb!]
	\centering
	\begin{subfigure}[b]{0.45\textwidth}
		\includegraphics[width=\textwidth]{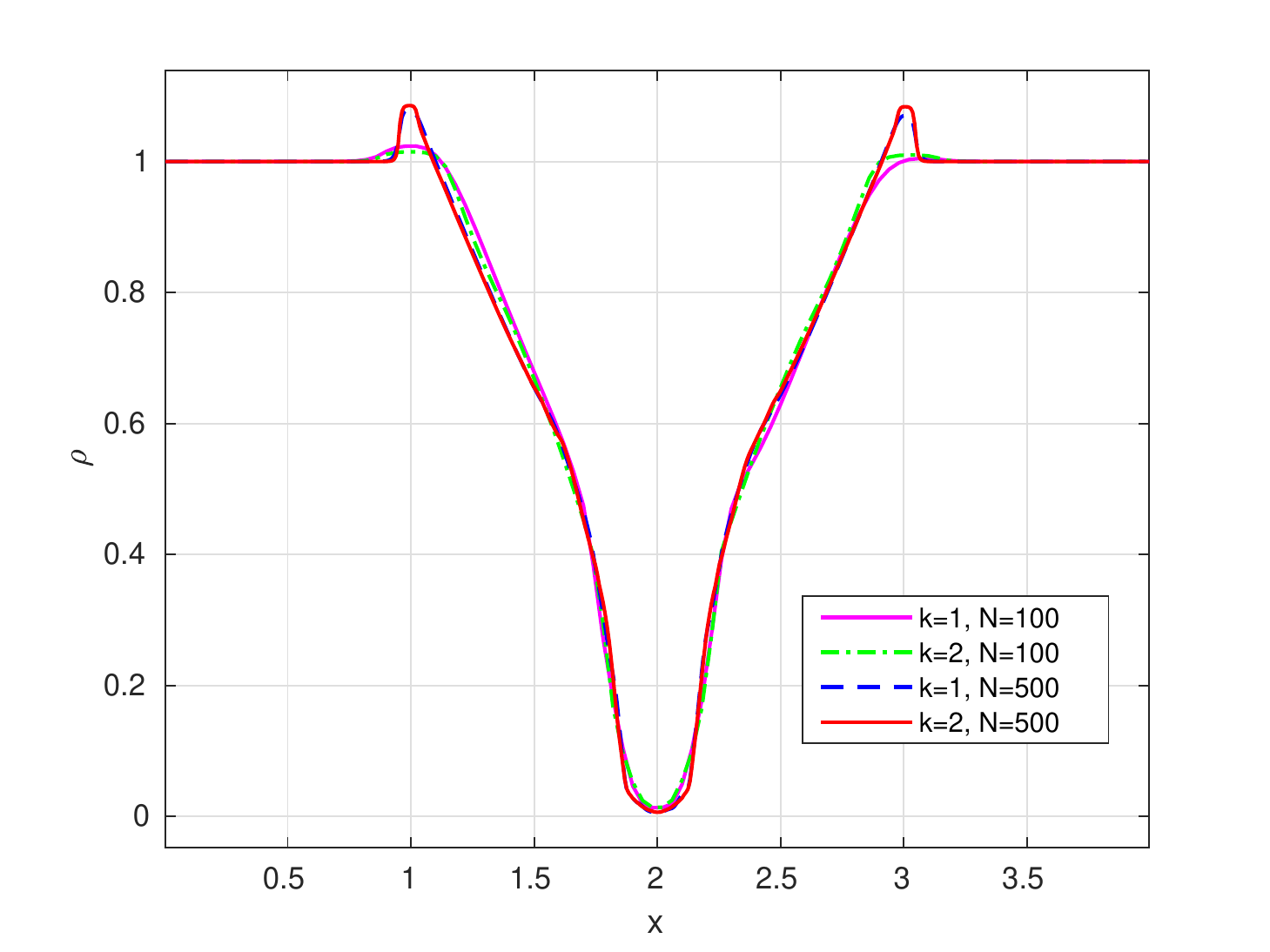}
		\caption{$\rho$}
	\end{subfigure}	
	\begin{subfigure}[b]{0.45\textwidth}
		\includegraphics[width=\textwidth]{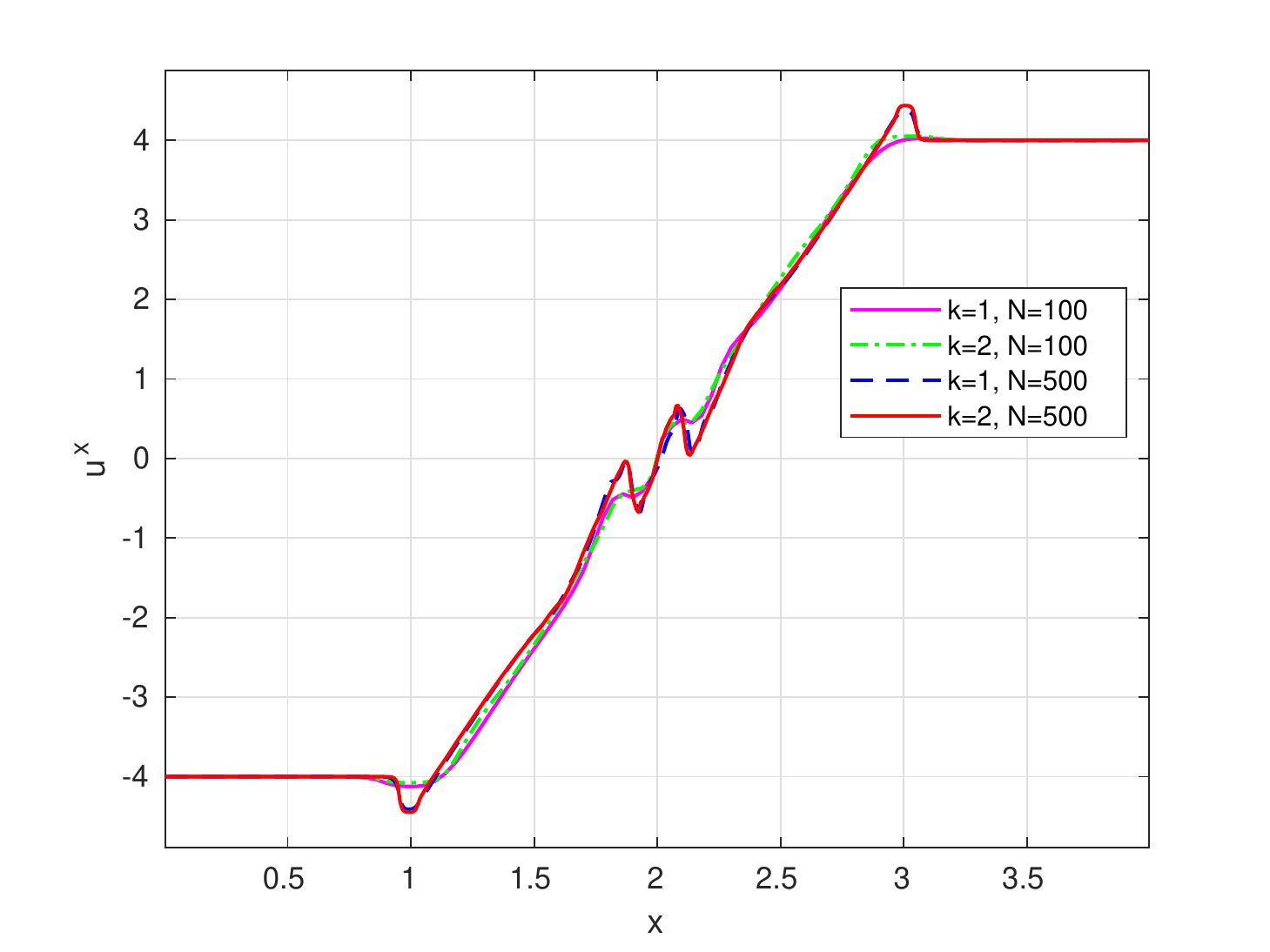}
		\caption{$v^x$}
	\end{subfigure}	
	\begin{subfigure}[b]{0.45\textwidth}
		\includegraphics[width=\textwidth]{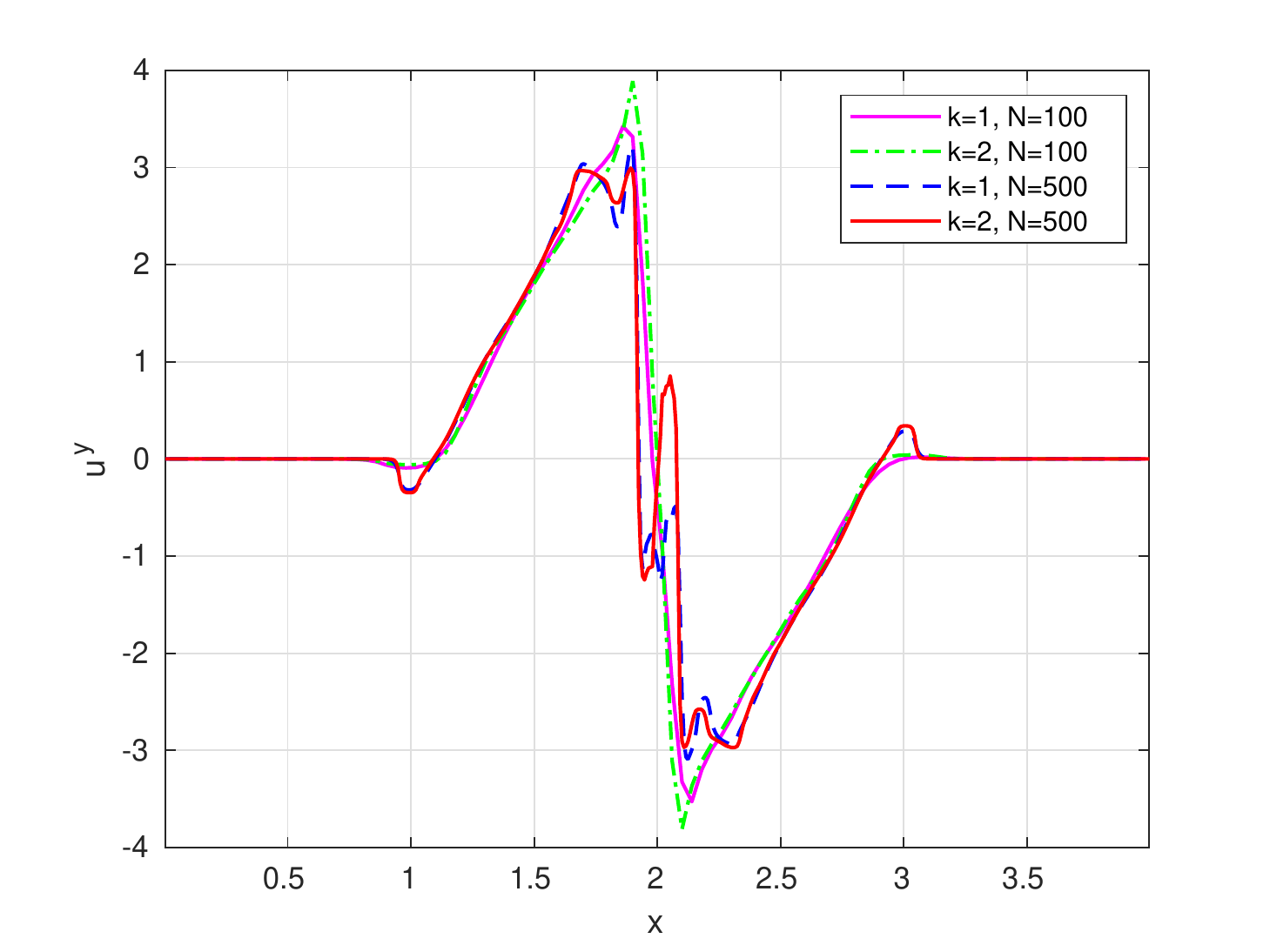}
		\caption{$v^y$}
	\end{subfigure}	
	\begin{subfigure}[b]{0.45\textwidth}
		\includegraphics[width=\textwidth]{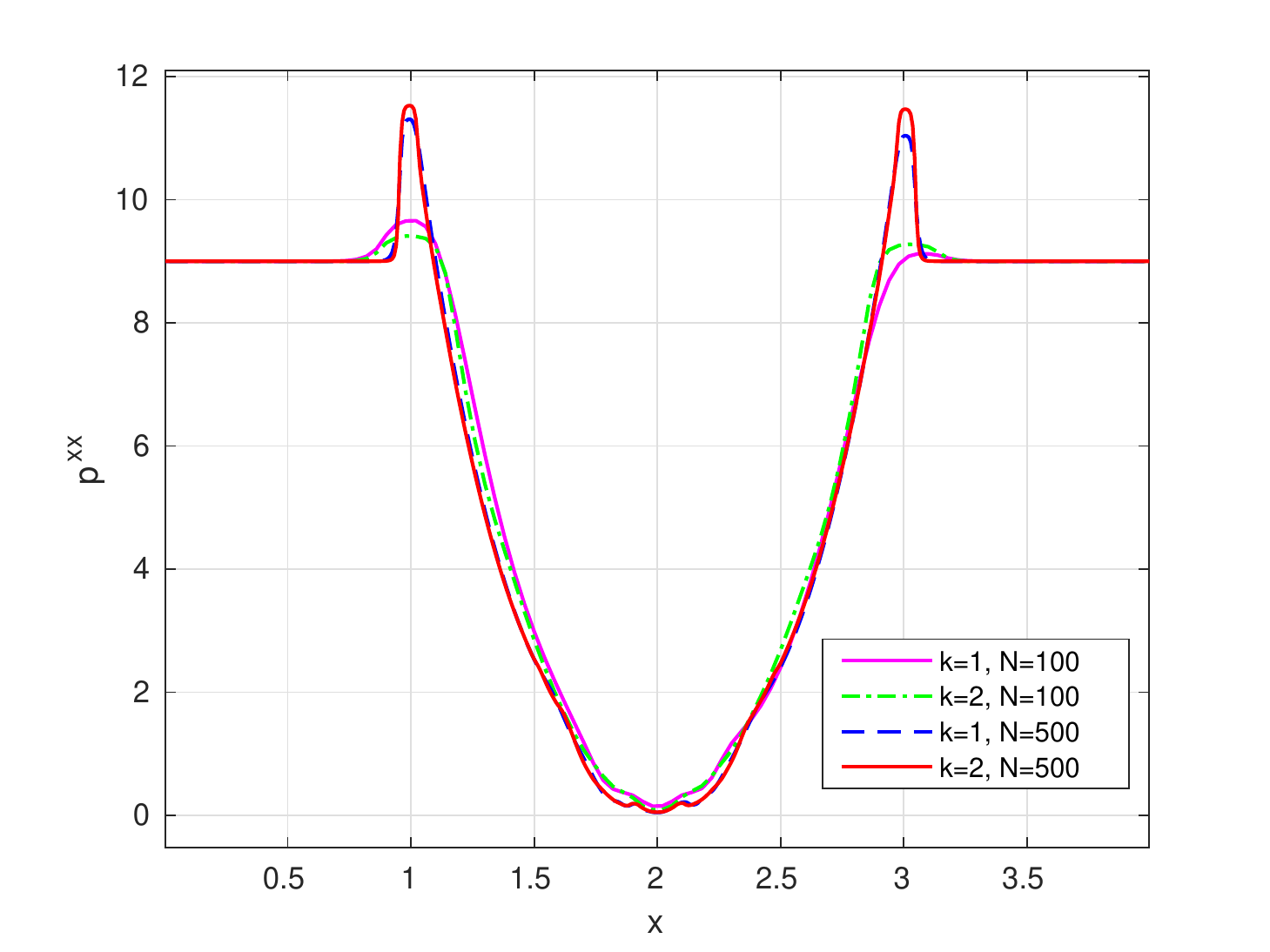}
		\caption{$p^{xx}$}
	\end{subfigure}	
	\begin{subfigure}[b]{0.45\textwidth}
		\includegraphics[width=\textwidth]{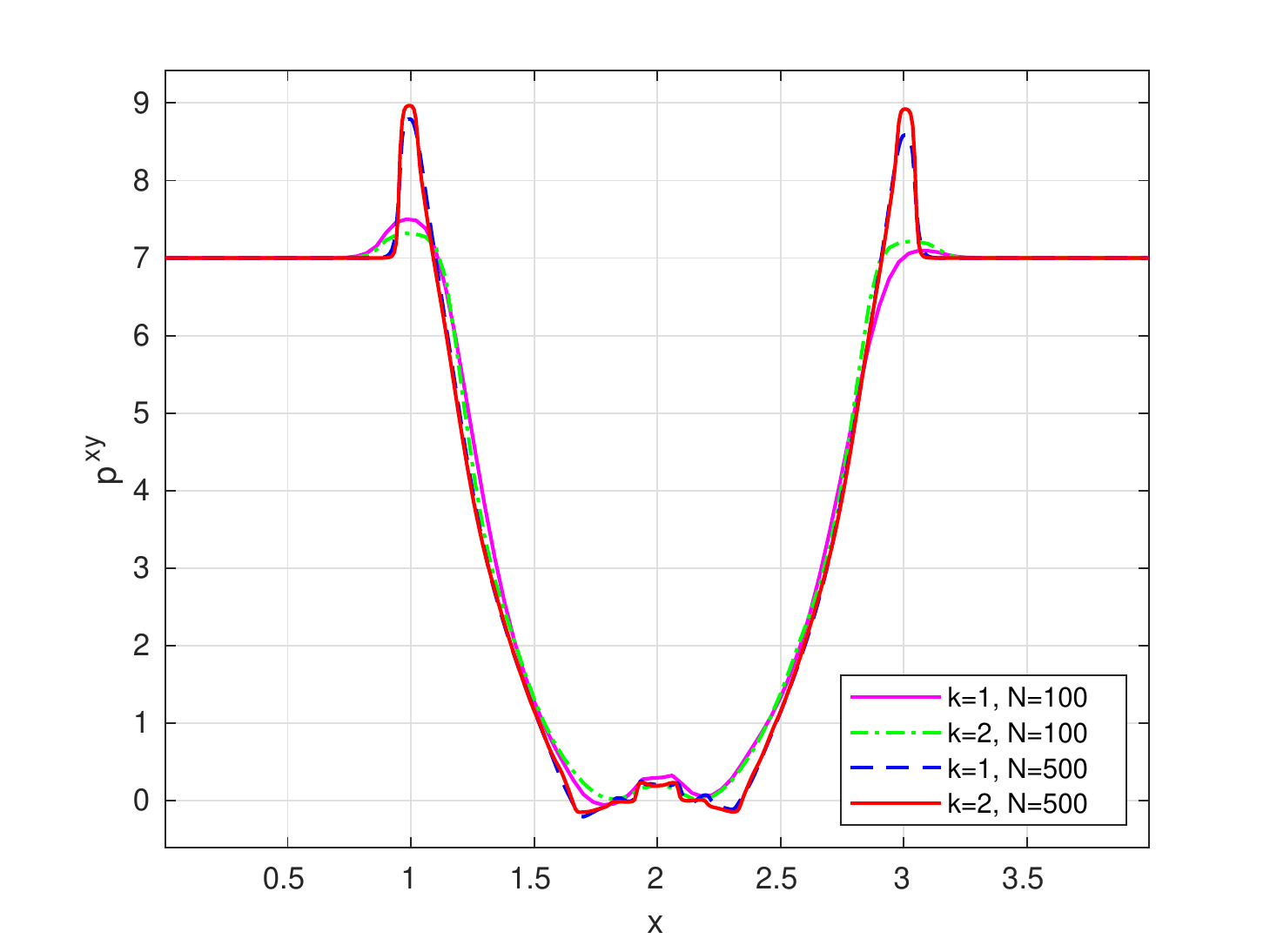}
		\caption{$p^{xy}$}
	\end{subfigure}	
	\begin{subfigure}[b]{0.45\textwidth}
		\includegraphics[width=\textwidth]{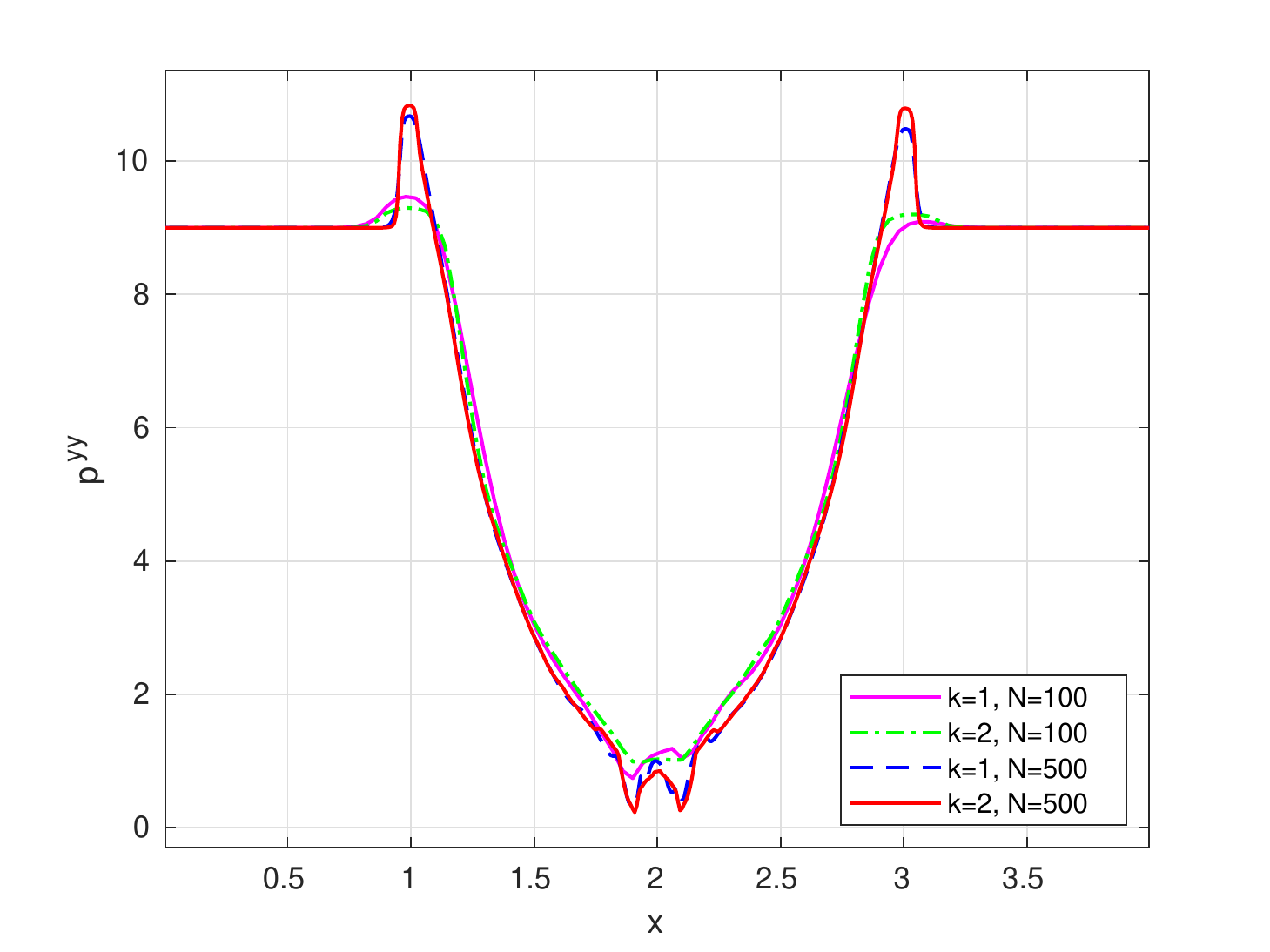}
		\caption{$p^{yy}$}
	\end{subfigure}	
	\caption{Test Problem 7 (Two rarefaction waves problem with Gaussian source): Plot of density, velocity and pressure components  for ESDG-O2 and ESDG-O3 using 100 and 500 cells.}
	\label{fig:pb5}
\end{figure}
\begin{figure}[htb!]
	\centering
	\begin{subfigure}[b]{0.45\textwidth}
		\includegraphics[width=\textwidth]{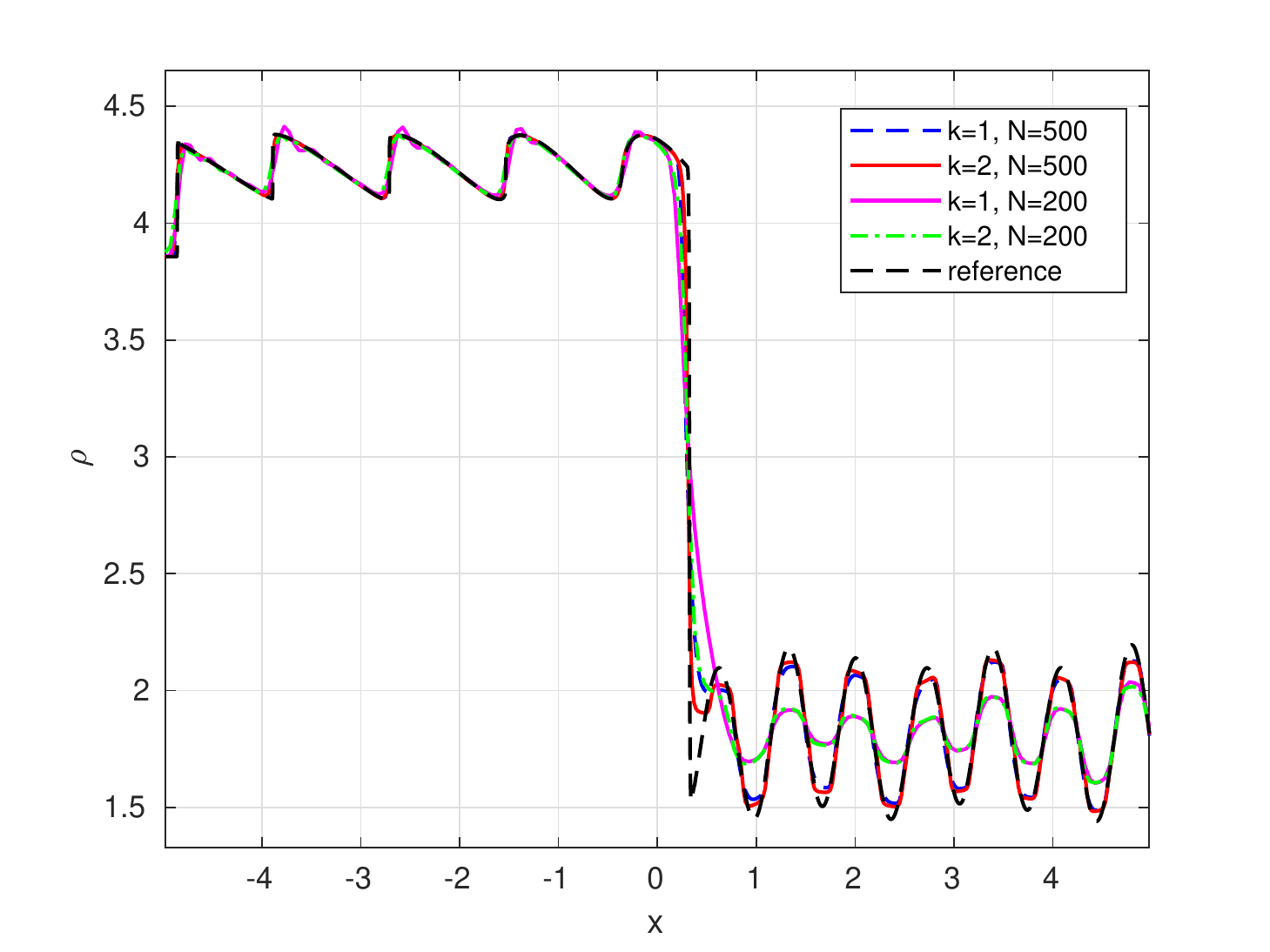}
		\caption{$\rho$}
	\end{subfigure}	
	\begin{subfigure}[b]{0.45\textwidth}
		\includegraphics[width=\textwidth]{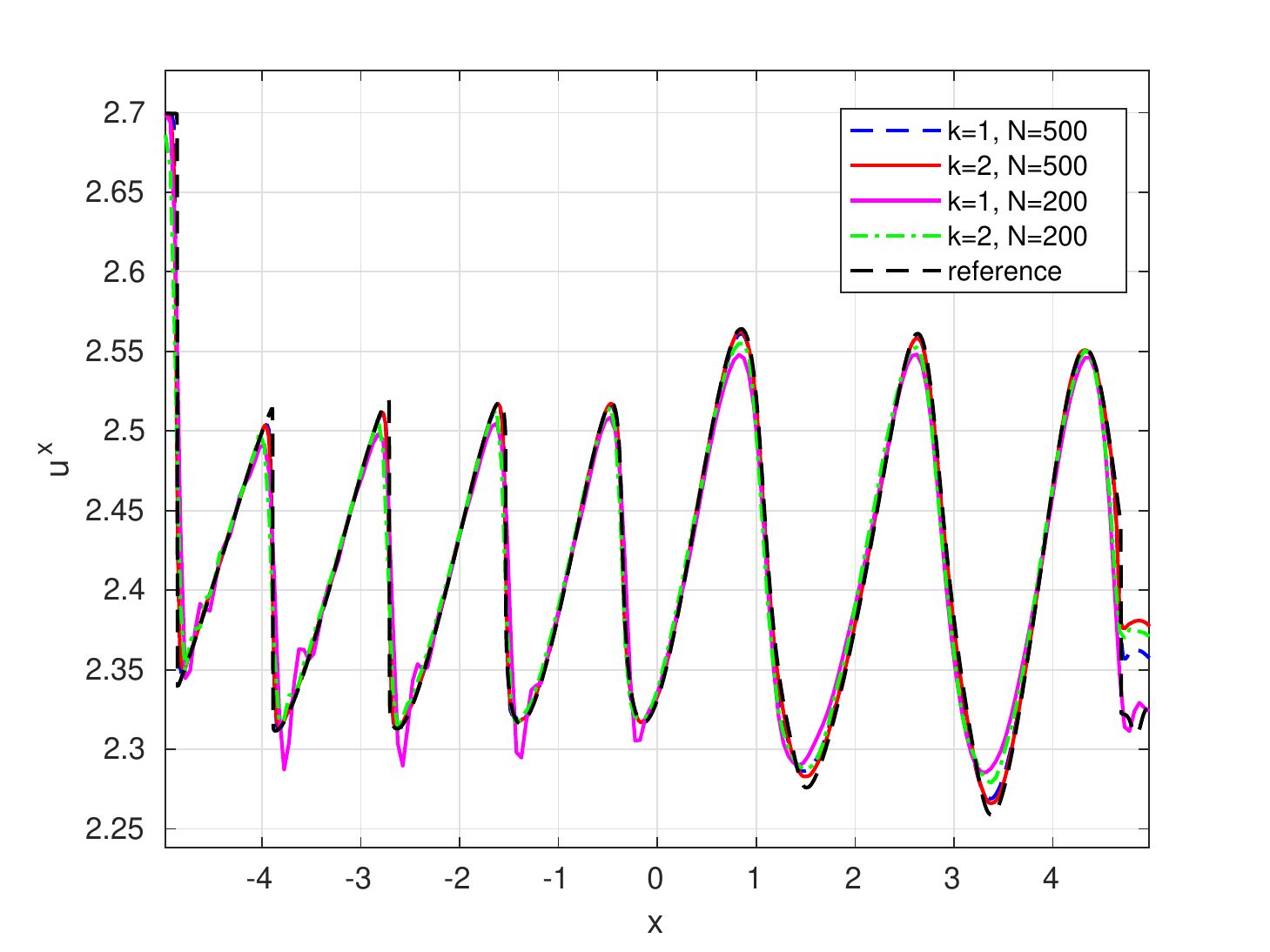}
		\caption{$v^x$}
	\end{subfigure}	
	\begin{subfigure}[b]{0.45\textwidth}
		\includegraphics[width=\textwidth]{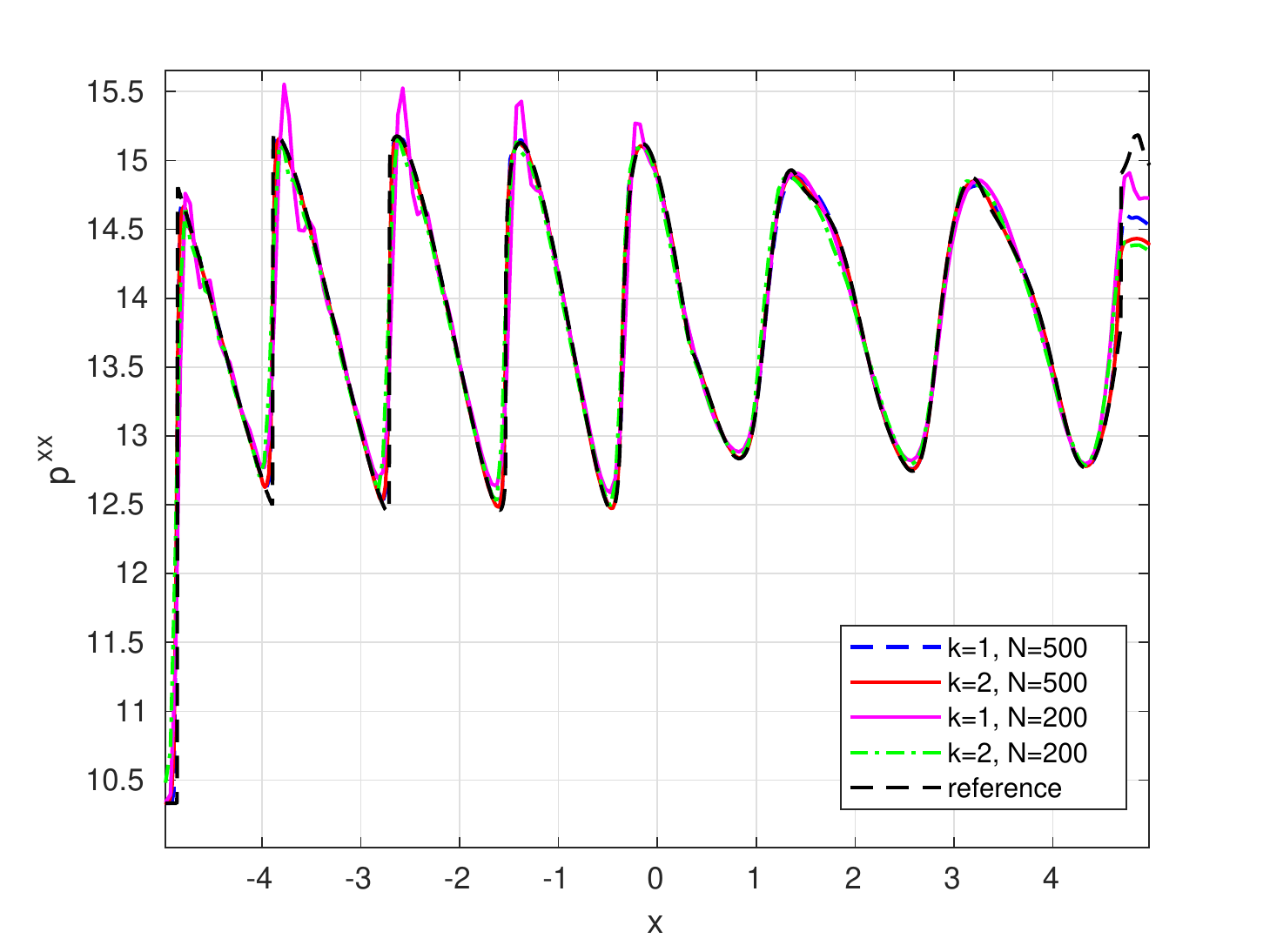}
		\caption{$p^{xx}$}
	\end{subfigure}	
	\begin{subfigure}[b]{0.45\textwidth}
		\includegraphics[width=\textwidth]{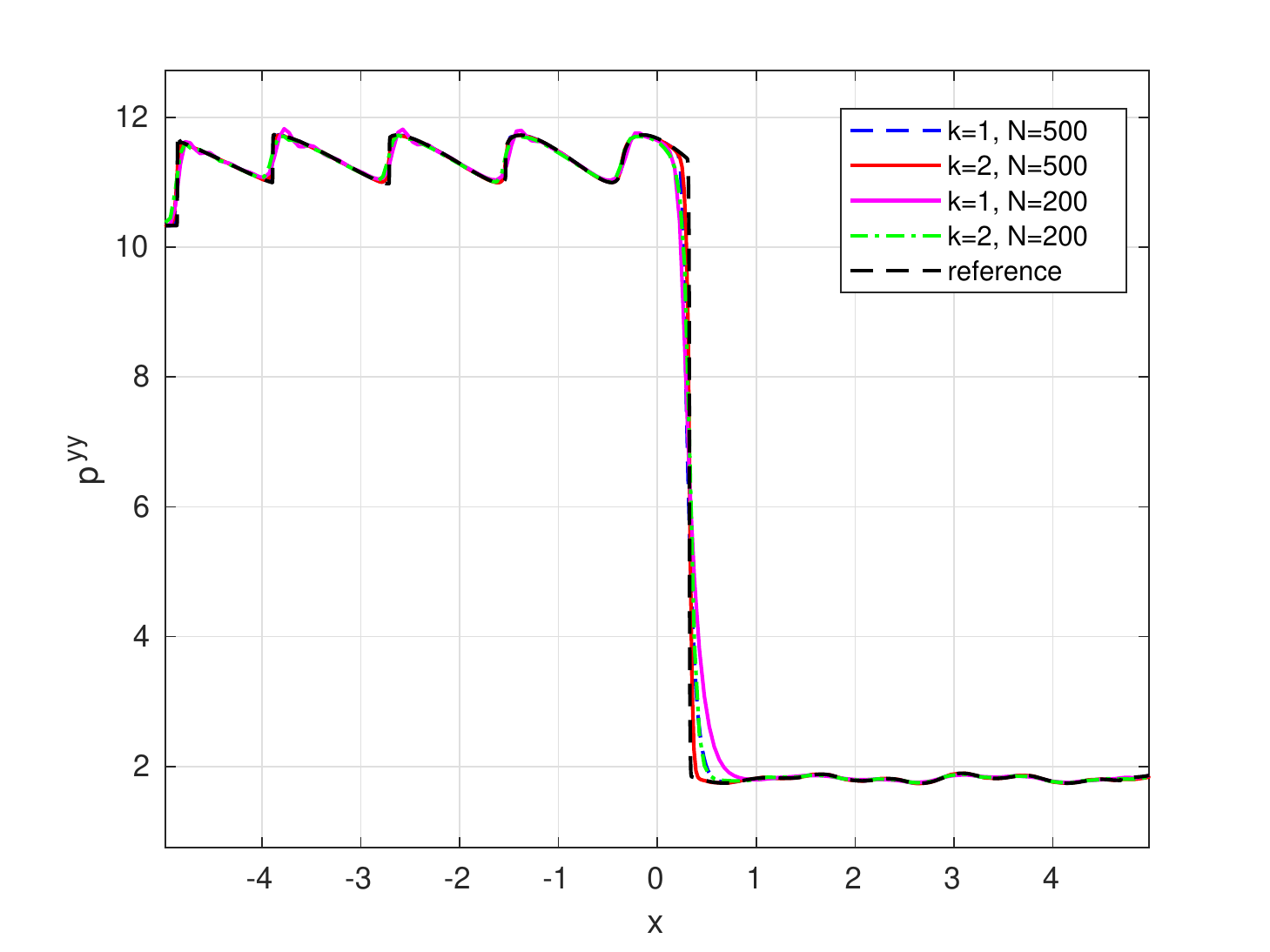}
		\caption{$p^{yy}$}
	\end{subfigure}	
	\caption{Test Problem 8 (Shu-Osher test problem): Plot of density, velocity and pressure components for ESDG-O2(k=1) and ESDG-O3(k=2) using 100 and 500 cells.}
	\label{fig:pb9}
\end{figure}

\begin{example}[Two rarefaction waves problem with Gaussian source]
	This is another test case, where we test the robustness of the scheme in low density and pressure areas. However, in this case the low density and pressure areas are generated via source terms. We consider the  computational domain of $[0,\,4]$ with outflow boundary conditions and the initial discontinuity is placed at $x=2$. The left and right states are given in Table \ref{tab:1dsource}. To test the effects of source terms, following \cite{meena2017positivity,meena_positivity-preserving_2020}, we consider the Gaussian source term corresponding to 
	\begin{equation*}
	W(x,\,t)=25 \exp(-200(x-2)^2).
	\end{equation*} 
	\begin{table}[htb!]
		\centering
		\begin{tabular}{l|cccccc}
			\hline
			States & $\rho$ & $v^x$ & $v^y$ & $p^{xx}$ & $p^{xy}$ & $p^{yy}$ \\ 
			\hline
			Left & 1 & -4  & 0 & 9 & 7 & 9 \\
			Right& 1 &  4  & 0 & 9 & 7 & 9\\
			\hline
		\end{tabular}
		\caption{Test Problem 7: Initial conditions for two rarefaction waves with Gaussian source term problem}
		\label{tab:1dsource}
	\end{table}
	
	Computational results for both schemes are plotted in Figure \ref{fig:pb5} at $T=0.1$. In this test, we use bound preserving limiter for both the schemes. We again observe that both of these schemes are able the capture all the features of the solution, and both are stable. Furthermore, the solution is highly accurate at the finer mesh of $500$ cells.
\end{example}

\begin{example}[Shu-Osher test problem]
	This test problem is obtained by modifying the Shu-Osher test case problem for Euler equations (\cite{shu1988,meena_positivity-preserving_2020}). We consider the computational domain of $[-5,5]$ with initial discontinuity at $x=-4$, which separates states given in Table \ref{tab:shuosher}. The computations are performed using $200$ and $500$ cells till time $T=1.8$.  Results show that both ESDG-O2 and ESDG-O3 schemes are capable of resolving the small-scale features of the flow. We note that the results using the fine grid ($N=500$) are much more accurate than the results on the coarse grid ($N=200$).
	\begin{table}
		\centering
		\begin{tabular}{l|cccccc}
			\hline
			States & $\rho$ & $v^x$ & $v^y$ & $p^{xx}$ & $p^{xy}$ & $p^{yy}$ \\ 
			\hline 
			Left & 3.857143 & 2.699369  & 0 & 10.33333 & 0 & 10.33333 \\
			Right& $1+0.2 \sin(5x)$ &  0  & 0 & 1 & 0 & 1\\
			\hline
		\end{tabular}
		\caption{Test Problem 8: Initial conditions for Shu-Osher test problem}
		\label{tab:shuosher}
	\end{table}
\end{example}
	
\subsection{Two dimensional numerical tests}
We will now present two dimensional test cases. 
\begin{example} [Two dimensional near vacuum test problem]
	In this test case form \cite{meena2017positivity,meena_positivity-preserving_2020}, we have a low density and low-pressure zone, hence we will test the robustness of the algorithms. We consider the computational domain $[-2,2]\times[-2,2]$ with outflow boundary conditions. Density is taken to be unity throughout the domain, whereas the pressure components are, $p^{xx}=p^{yy}=2, p^{xy}=0$. The velocity is set to be $8(x/r,\,y/r)$, where $r=\sqrt{x^2+y^2}$. Simulations are performed using $100\times100$ mesh until $T=0.05$. Bound preserving limiter is used during the simulations. 
	\begin{figure}[htb!]
		\centering
		\begin{subfigure}[b]{0.45\textwidth}
			\includegraphics[width=\textwidth]{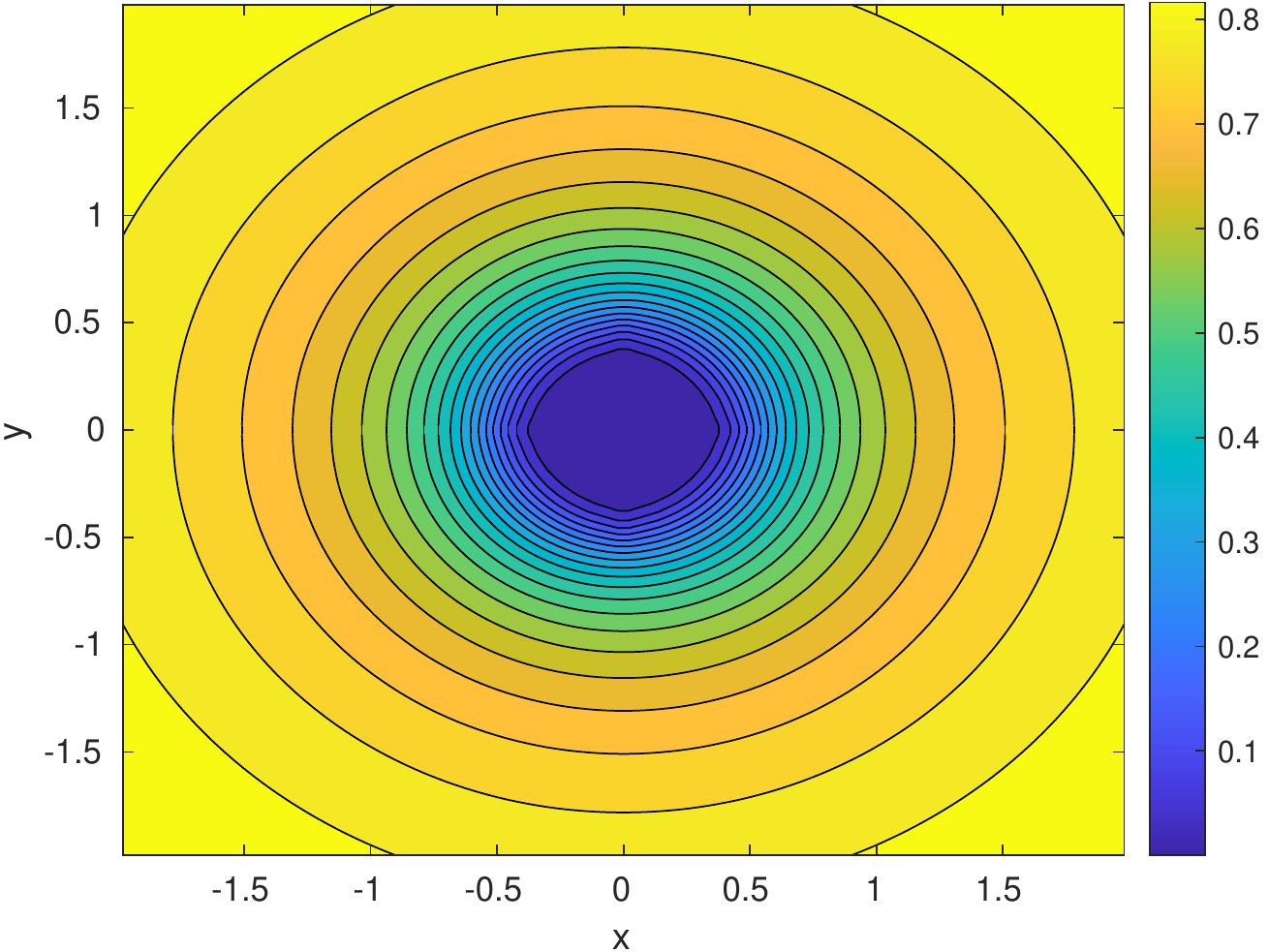}
			\caption{$\rho$}
		\end{subfigure}	
		\begin{subfigure}[b]{0.45\textwidth}
			\includegraphics[width=\textwidth]{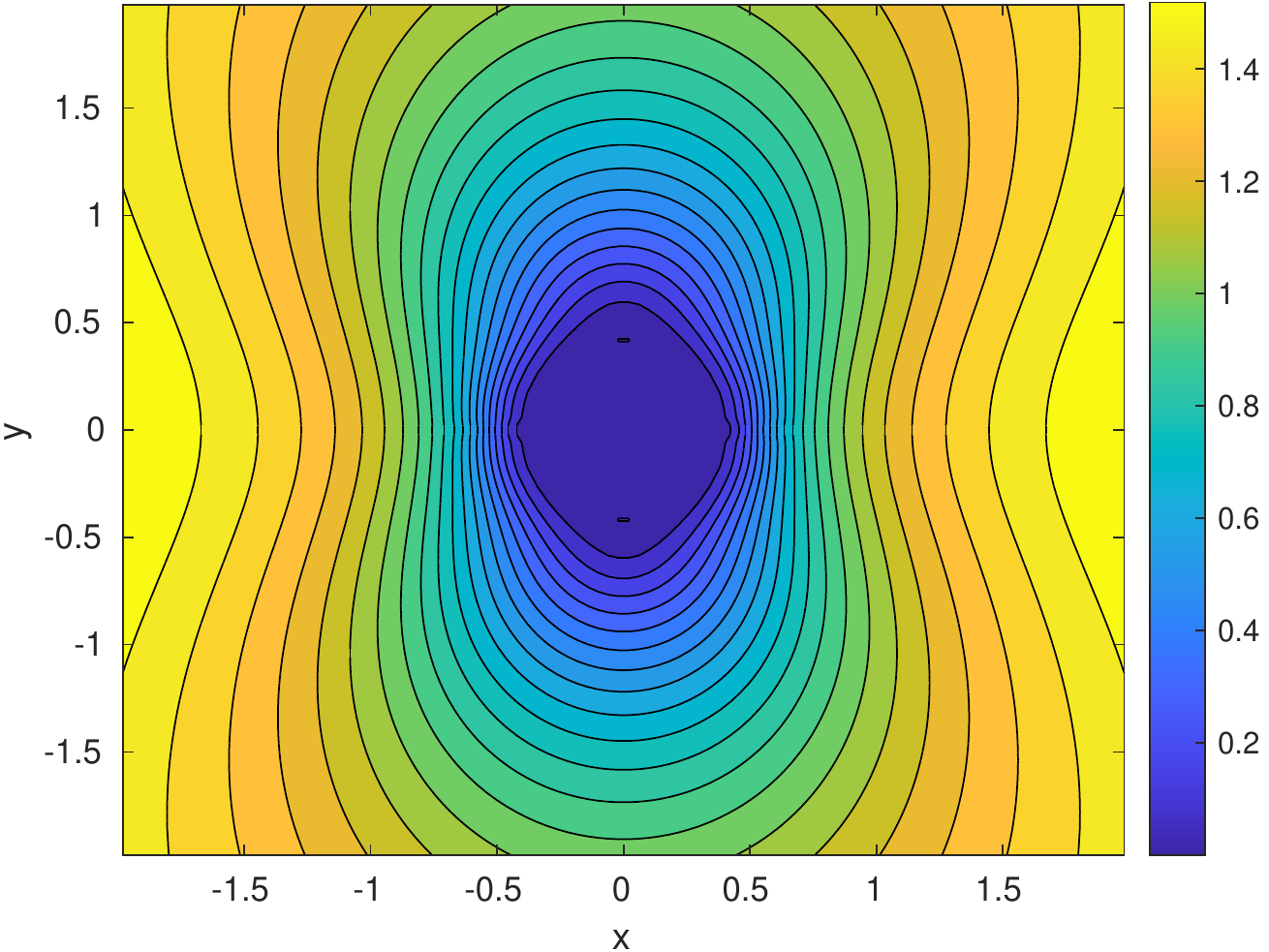}
			\caption{$p^{xx}$}
		\end{subfigure}	
		\begin{subfigure}[b]{0.45\textwidth}
			\includegraphics[width=\textwidth]{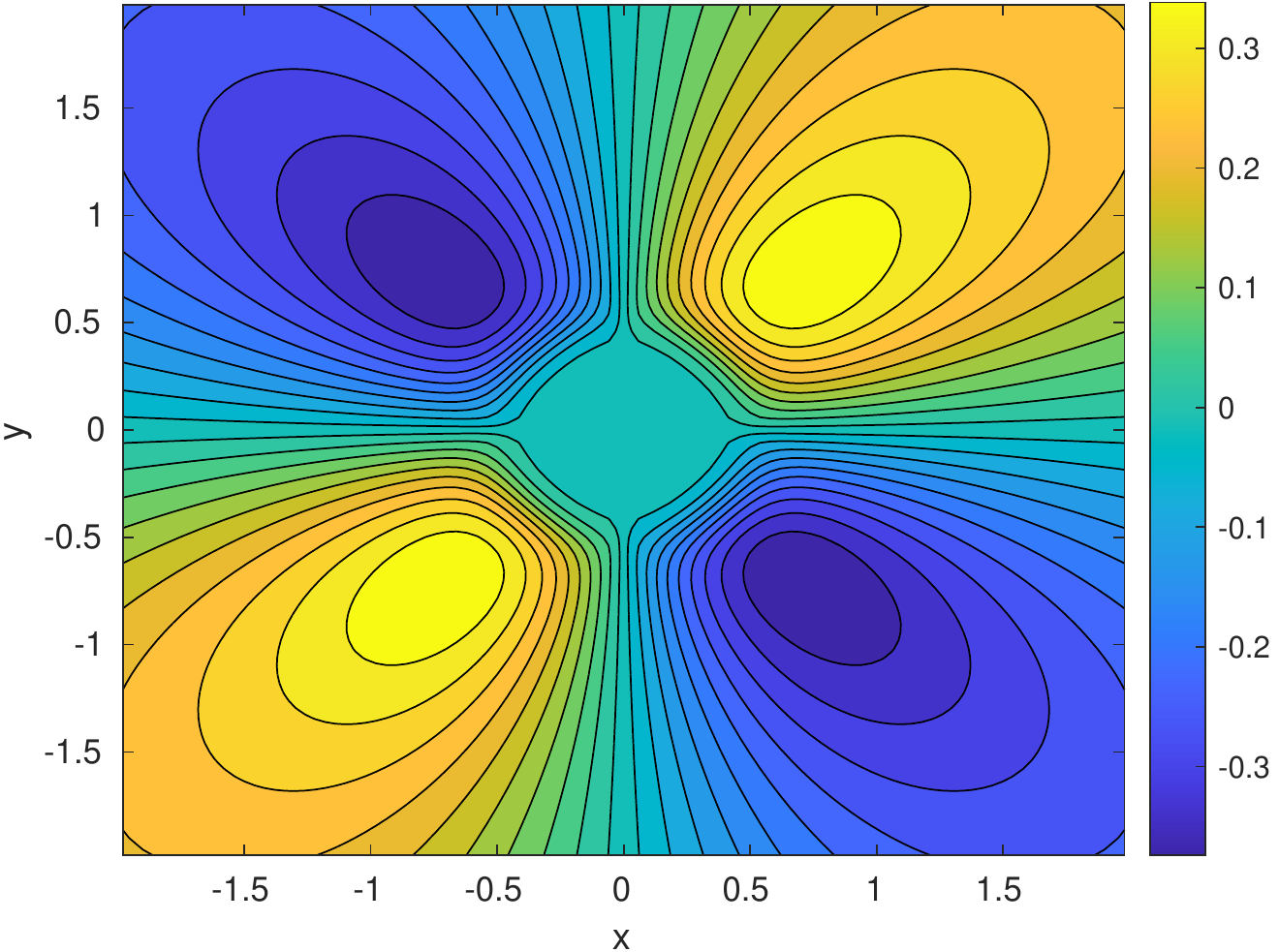}
			\caption{$p^{xy}$}
		\end{subfigure}
		\begin{subfigure}[b]{0.45\textwidth}
			\includegraphics[width=\textwidth]{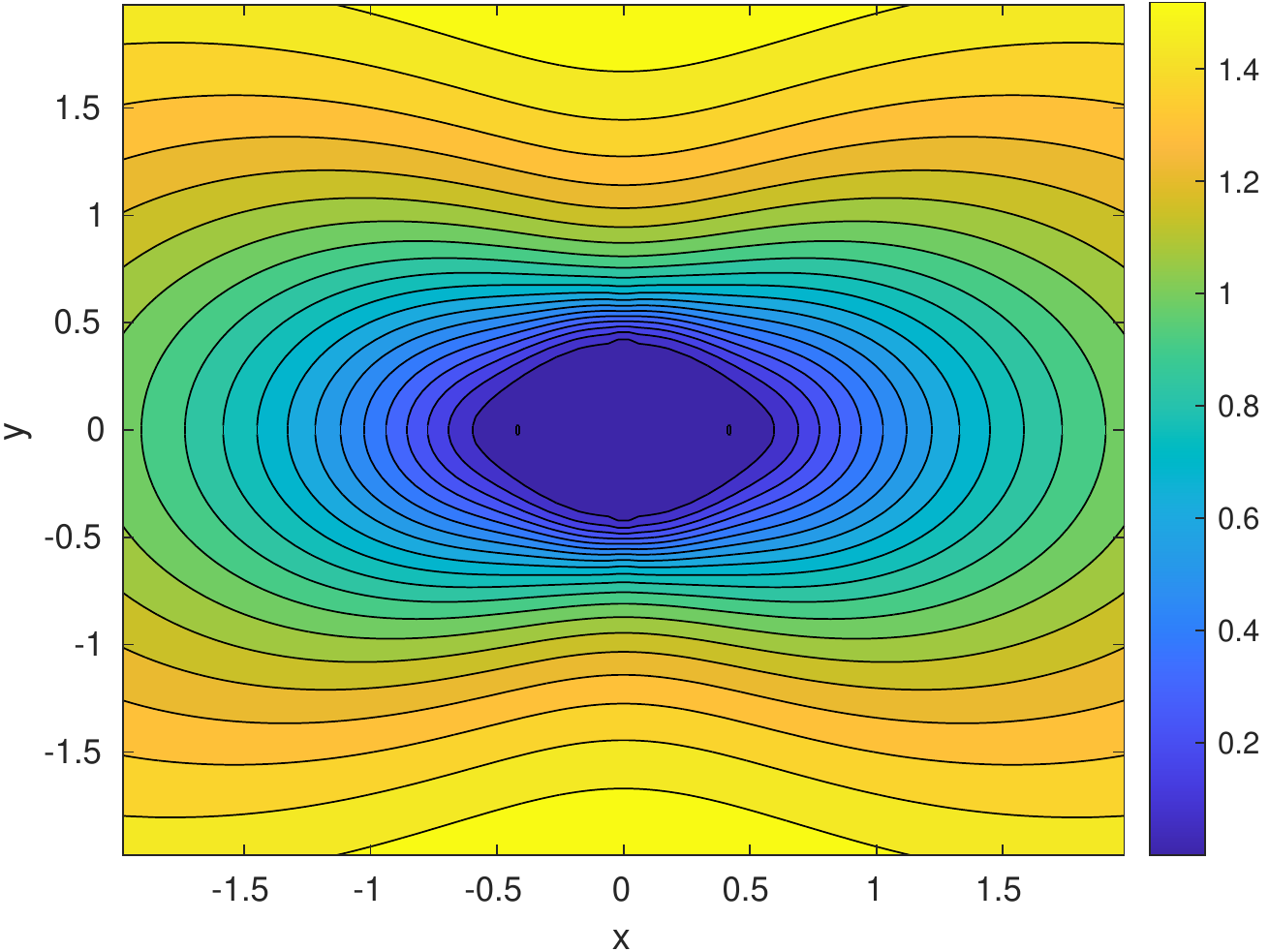}
			\caption{$p^{yy}$}
		\end{subfigure}
		\caption{Test Problem 9 (Two dimensional near vacuum test problem): Plot of density and pressure components for ESDG-O2 at time $t=0.05$ using $100\times100$ mesh.}
		\label{fig:pb11a}
	\end{figure}
	\begin{figure}[htb!]
		\centering
		\begin{subfigure}[b]{0.45\textwidth}
			\includegraphics[width=\textwidth]{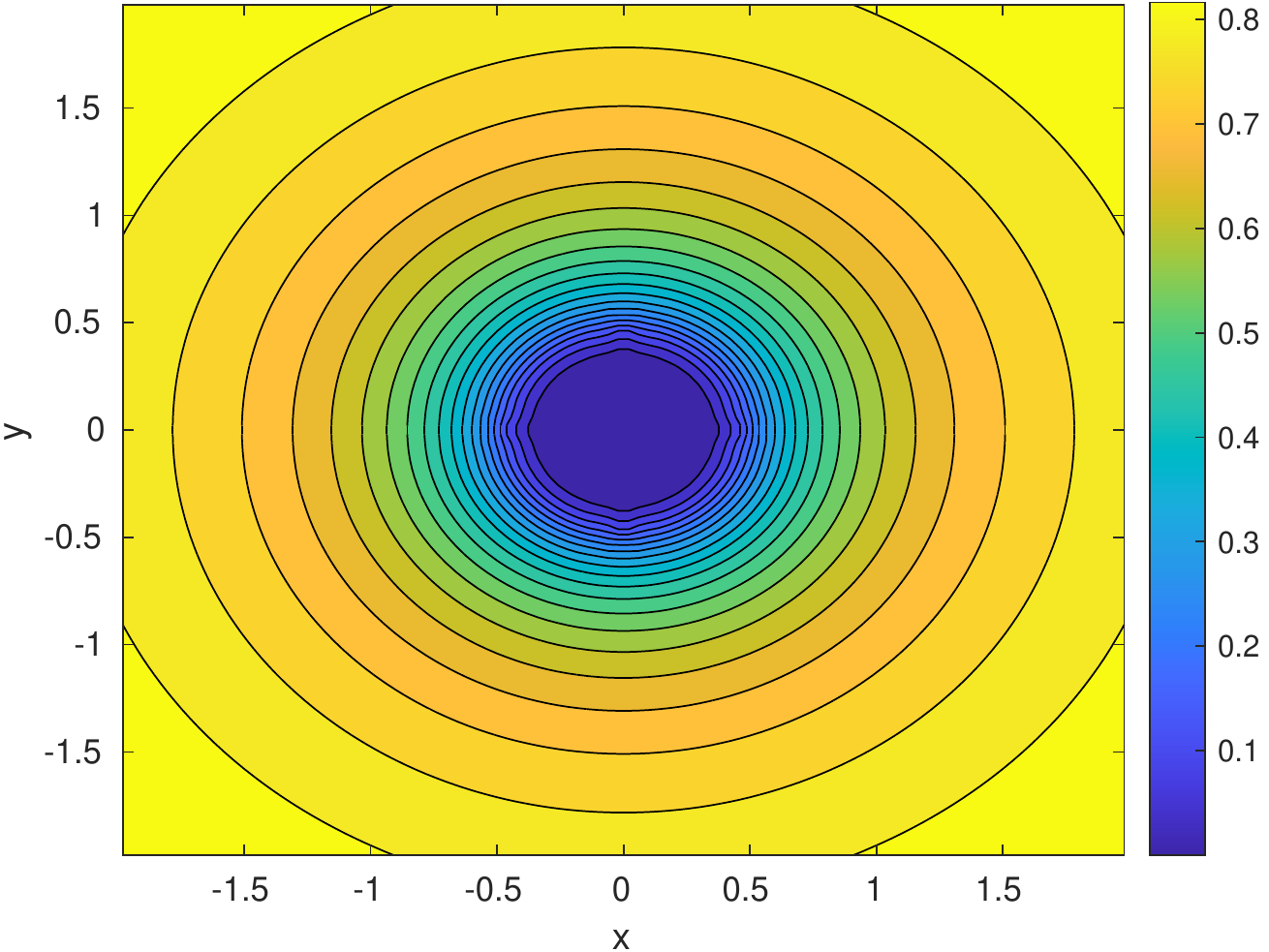}
			\caption{$\rho$}
		\end{subfigure}	
		\begin{subfigure}[b]{0.45\textwidth}
			\includegraphics[width=\textwidth]{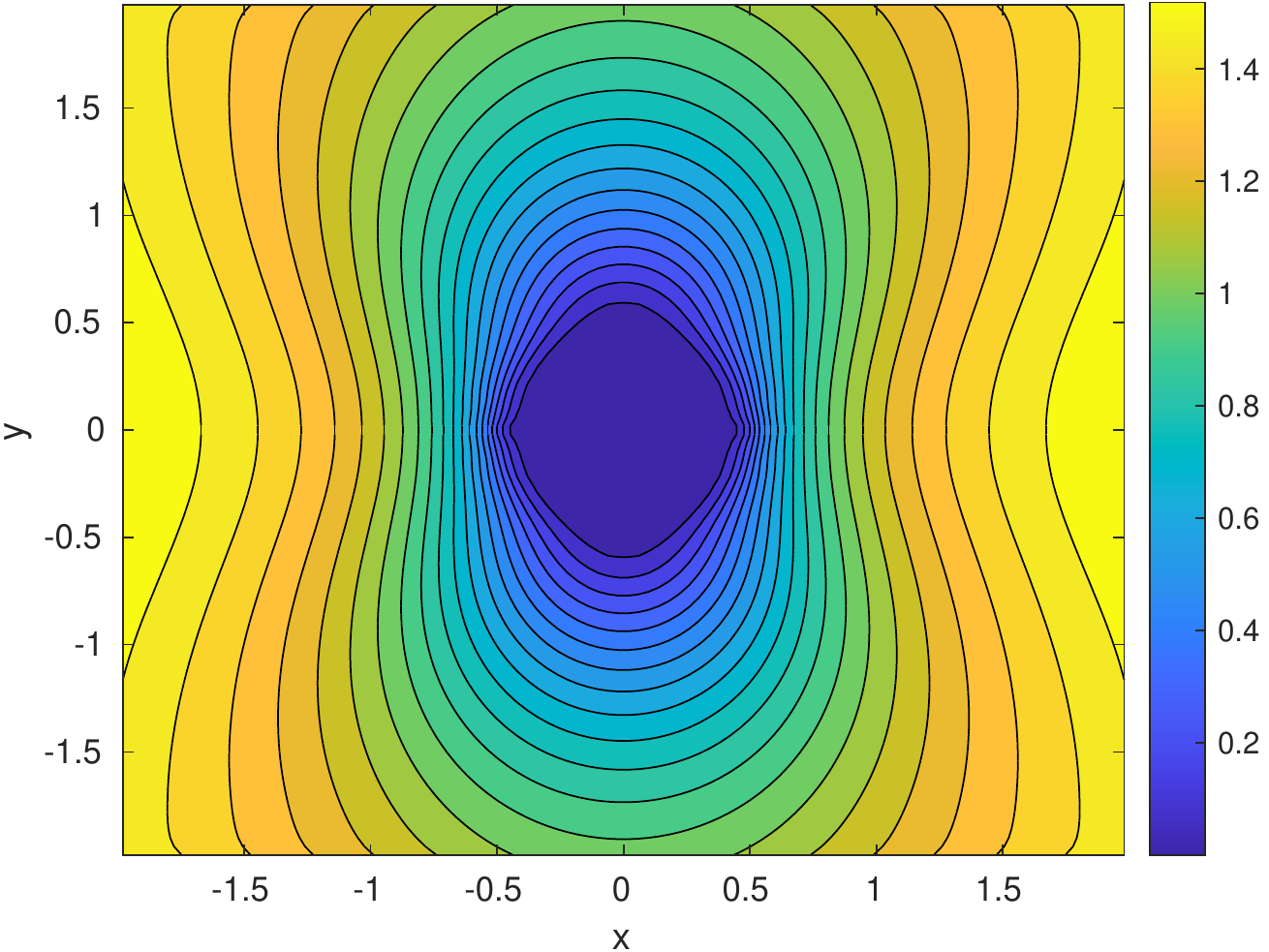}
			\caption{$p^{xx}$}
		\end{subfigure}	
		\begin{subfigure}[b]{0.45\textwidth}
			\includegraphics[width=\textwidth]{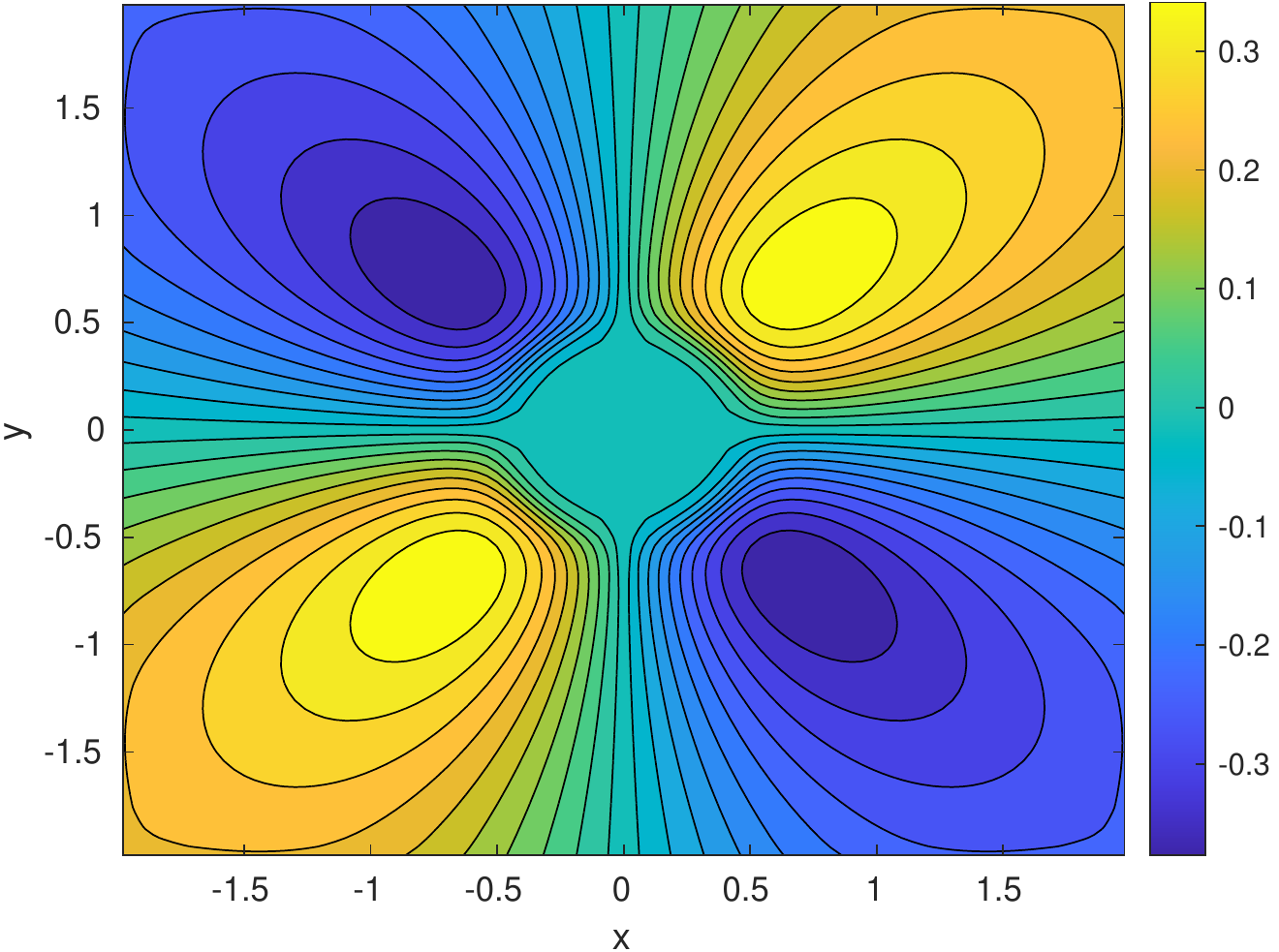}
			\caption{$p^{xy}$}
		\end{subfigure}
		\begin{subfigure}[b]{0.45\textwidth}
			\includegraphics[width=\textwidth]{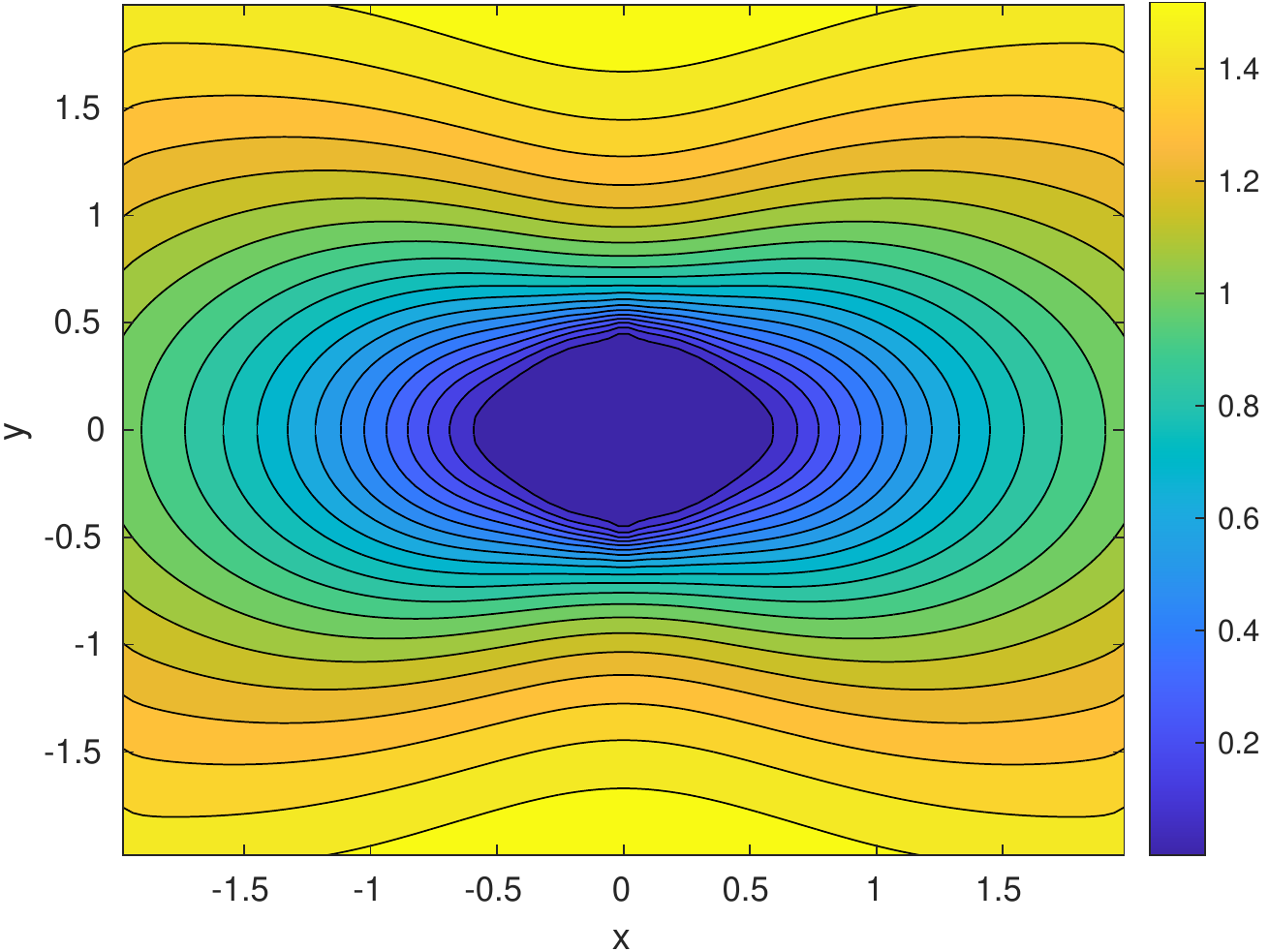}
			\caption{$p^{yy}$}
		\end{subfigure}
		\caption{Test Problem 9 (Two dimensional near vacuum test problem): Plot of density and pressure components for ESDG-O3 at time $t=0.05$ using $100\times100$ mesh.}
		\label{fig:pb11b}
	\end{figure}
The numerical results for ESDG-O2 are presented in Figure \ref{fig:pb11a}, and for ESDG-O3, they are presented in Figure \ref{fig:pb11b}. Both schemes are able to resolve the solutions with similar details and give results consistent with  \cite{meena_positivity-preserving_2020}. 
\end{example}

\begin{example}[Uniform plasma state with Gaussian source]
	We consider another test case from \cite{meena2017positivity,meena_positivity-preserving_2020}. The  computational domain $[0,4]\times[0,4]$ is assumed to be contain uniform initial state, 
	\begin{equation*}
	\rho=0.1,\,v^x=v^y=0.0,\,p^{xx}=p^{yy}=9.0,\,p^{xy}=7.0.
	\end{equation*}
	The source terms are used using a Gaussian profile given by,
	\begin{equation*}
	W(x,y ,t ) = 25 \exp\left( -200\left(( x-2 )^ 2+(y-2)^2\right) \right).
	\end{equation*}
	We compute the solution using $100\times100$ mesh, until $T=0.1$. Bound preserving limiter is not used here. Numerical results are plotted in Figure \ref{pb6a}, and \ref{pb6b} shows an an-isotropic change in the density due to the source. Figure \ref{pb6c} shows the one dimensional cut of the results along $x+y=4$.  We note that both ESDG-O2 and ESDG-O3 have similar performance.
	\begin{figure}[htb!]
		\centering
		\begin{subfigure}[b]{0.45\textwidth}
			\includegraphics[width=\textwidth]{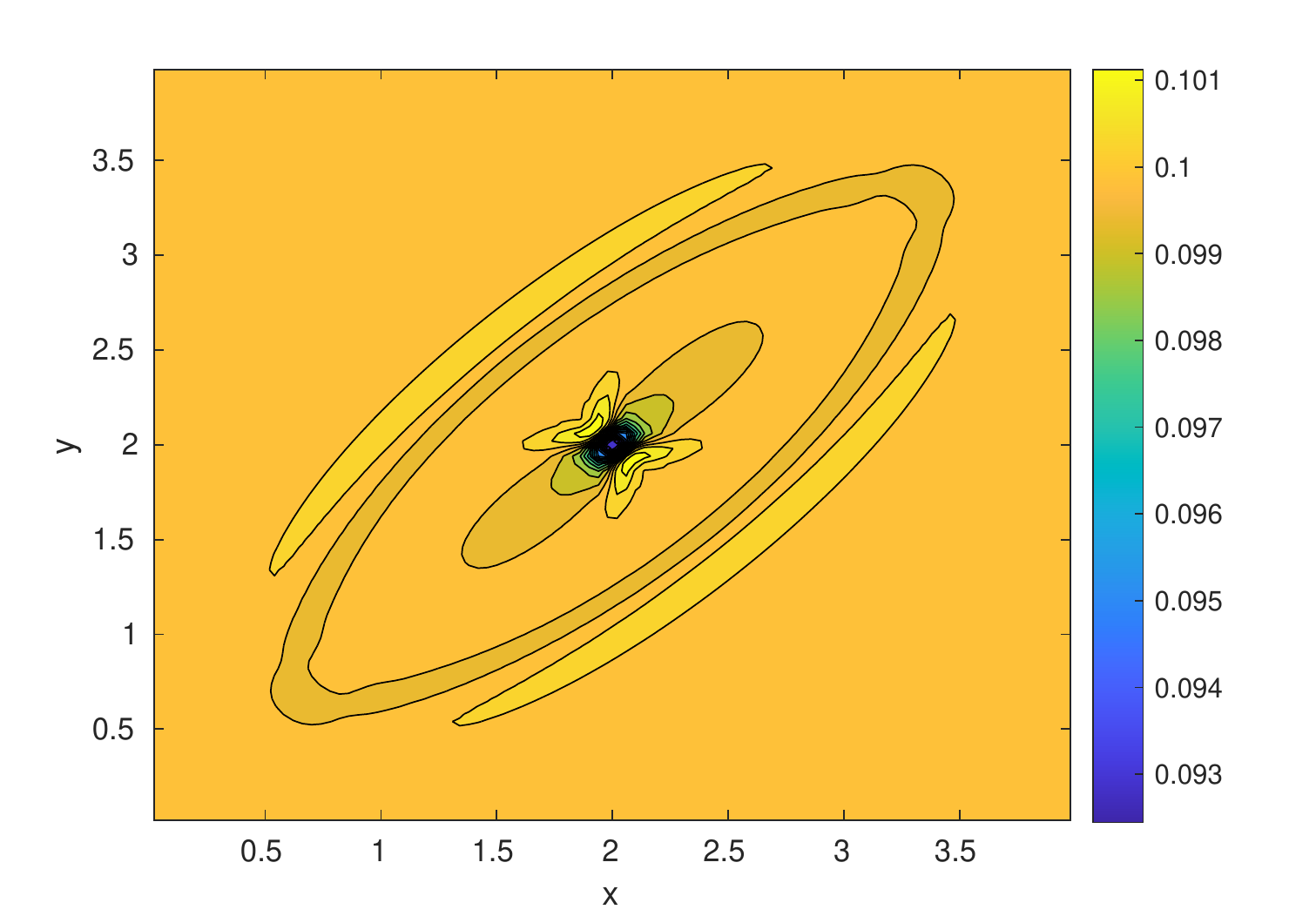}
			\caption{ESDG-O2}
			\label{pb6a}
		\end{subfigure}	
		\begin{subfigure}[b]{0.45\textwidth}
			\includegraphics[width=\textwidth]{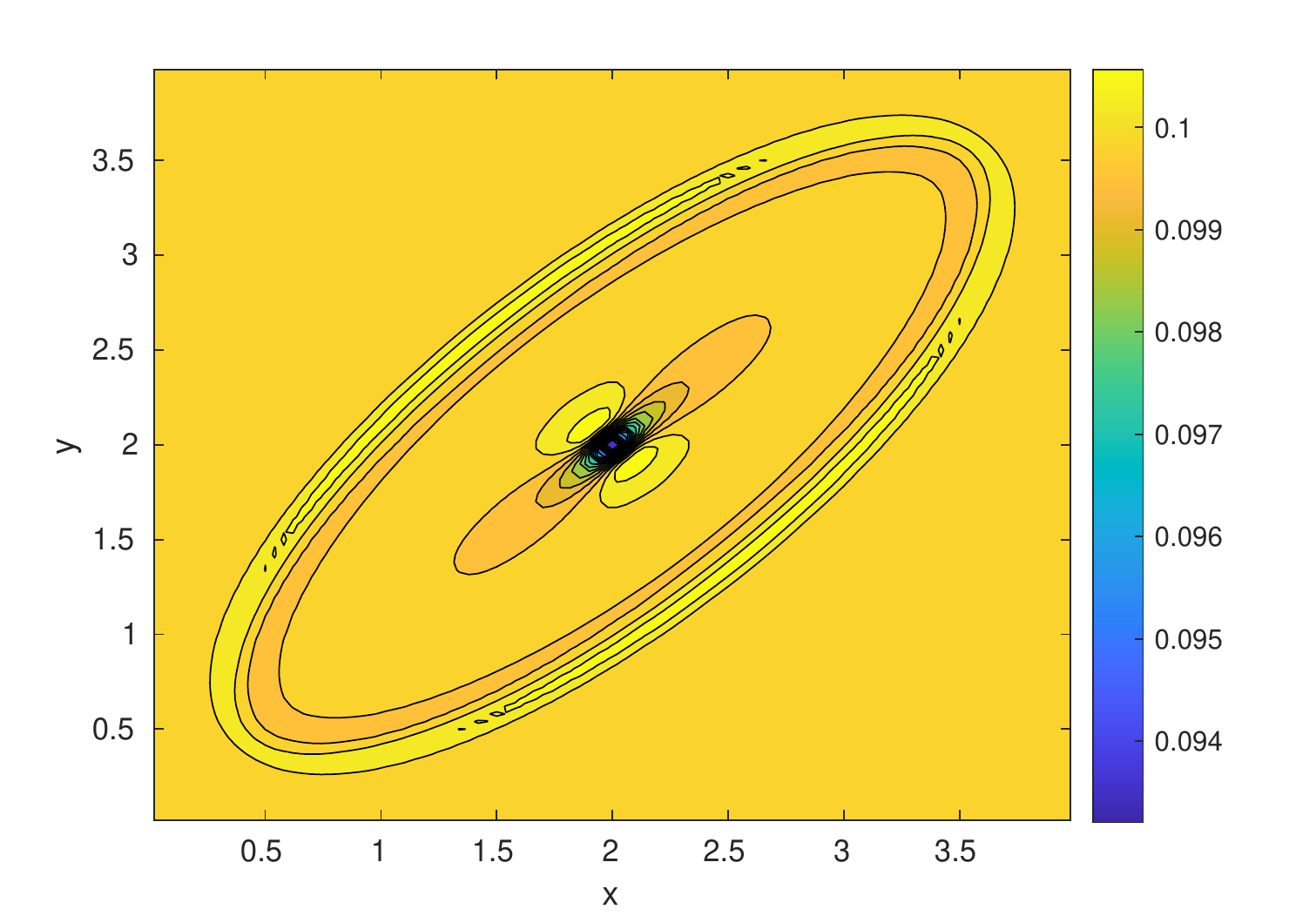}
			\caption{ESDG-O3}
			\label{pb6b}
		\end{subfigure}	
		\begin{subfigure}[b]{0.45\textwidth}
			\includegraphics[width=\textwidth]{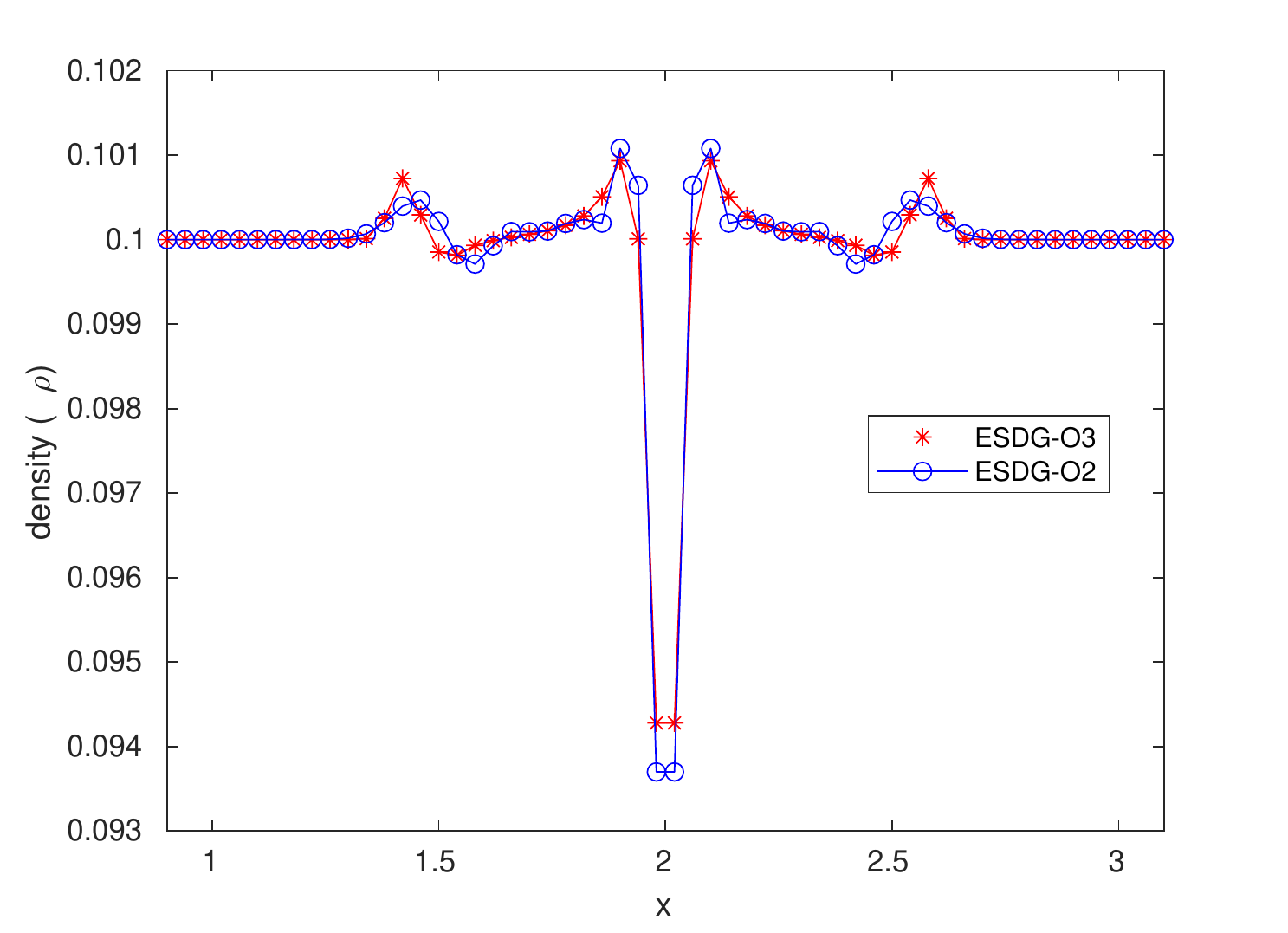}
			\caption{One dimensional cut of the computed solution (density) along the diagonal x+y=4.}
			\label{pb6c}
		\end{subfigure}
		\caption{Test Problem 10 (Uniform plasma state with Gaussian source): Plot of density using ESDG-O2 and ESDG-O3 schemes at time $t=0.1$ on $100\times100$ mesh.}
		\label{fig:pb6}
	\end{figure}
\end{example}

\begin{example}[Realistic simulation in two dimensions]
	In this test problem from \cite{berthon2015entropy}, we consider the domain  $[0,100]\times[0,100]$ filled with initial state,
	\begin{equation*}
	\rho=0.109885,\,v^x=v^y=0.0,\,p^{xx}=p^{yy}=1.0,\,p^{xy}=0.0.
	\end{equation*}
	We than consider a Gaussian source corresponds to
	\begin{equation*}
	W(x,y ,t ) =  exp\Bigg(-\Bigg(\frac{ x-50 }{10}\Bigg)^ 2-\Bigg(\frac{y-50}{10}\Bigg)^2\Bigg),
	\end{equation*}
	only in $x$-direction i.e. source in $y$-direction is set  be zero. Furthermore, an additional source $2 v_T \rho W$  ($v_T\in [0,\,1]$) is added to the energy equations (see \cite{berthon2015entropy,meena2017positivity}). We present the solutions  with $v_T=0$ and $v_T=1$ at $T=0.5$. Figure \ref{fig:pb7k1} and Figure \ref{fig:pb7k2} show the results by ESDG-O2 and ESDG-O3 schemes, respectively. We clearly observe that both schemes have similar performance at this resolution and obtained results are consistent with the results in \cite{meena2017positivity}. One dimensional cut of the solution along the diagonal $x+y=4$ in Figure \ref{pb6c} and Figure \ref{pb7c} shows that the computed density is lower when $v_T=1$.
	
	\begin{figure}[htb!]
		\centering
		\begin{subfigure}[b]{0.45\textwidth}
			\includegraphics[width=\textwidth]{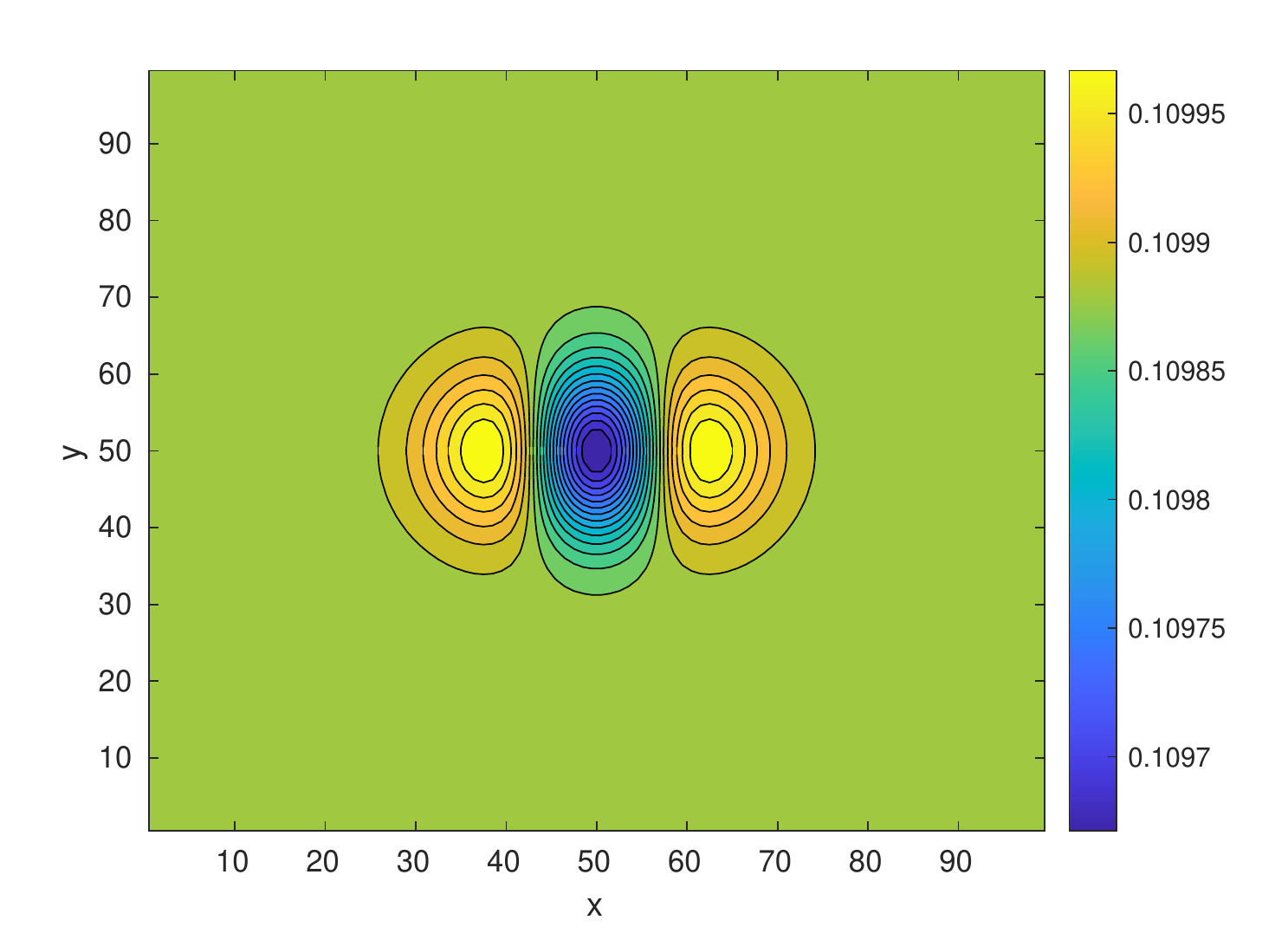}
			\caption{$v_T=0$}
			\label{pb7a}
		\end{subfigure}	
		\begin{subfigure}[b]{0.45\textwidth}
			\includegraphics[width=\textwidth]{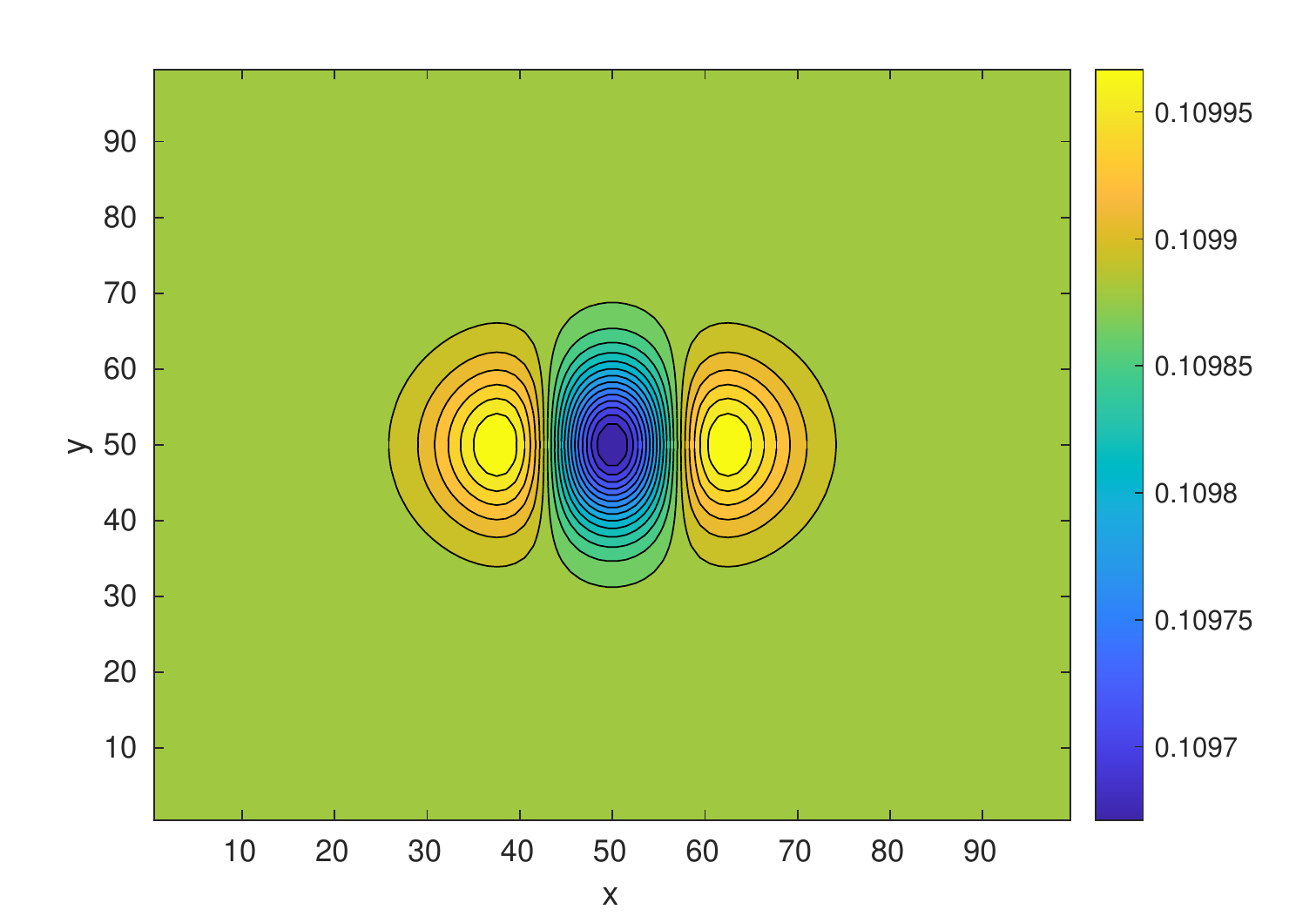}
			\caption{$v_T=1$}
			\label{pb7b}
		\end{subfigure}	
		\begin{subfigure}[b]{0.45\textwidth}
			\includegraphics[width=\textwidth]{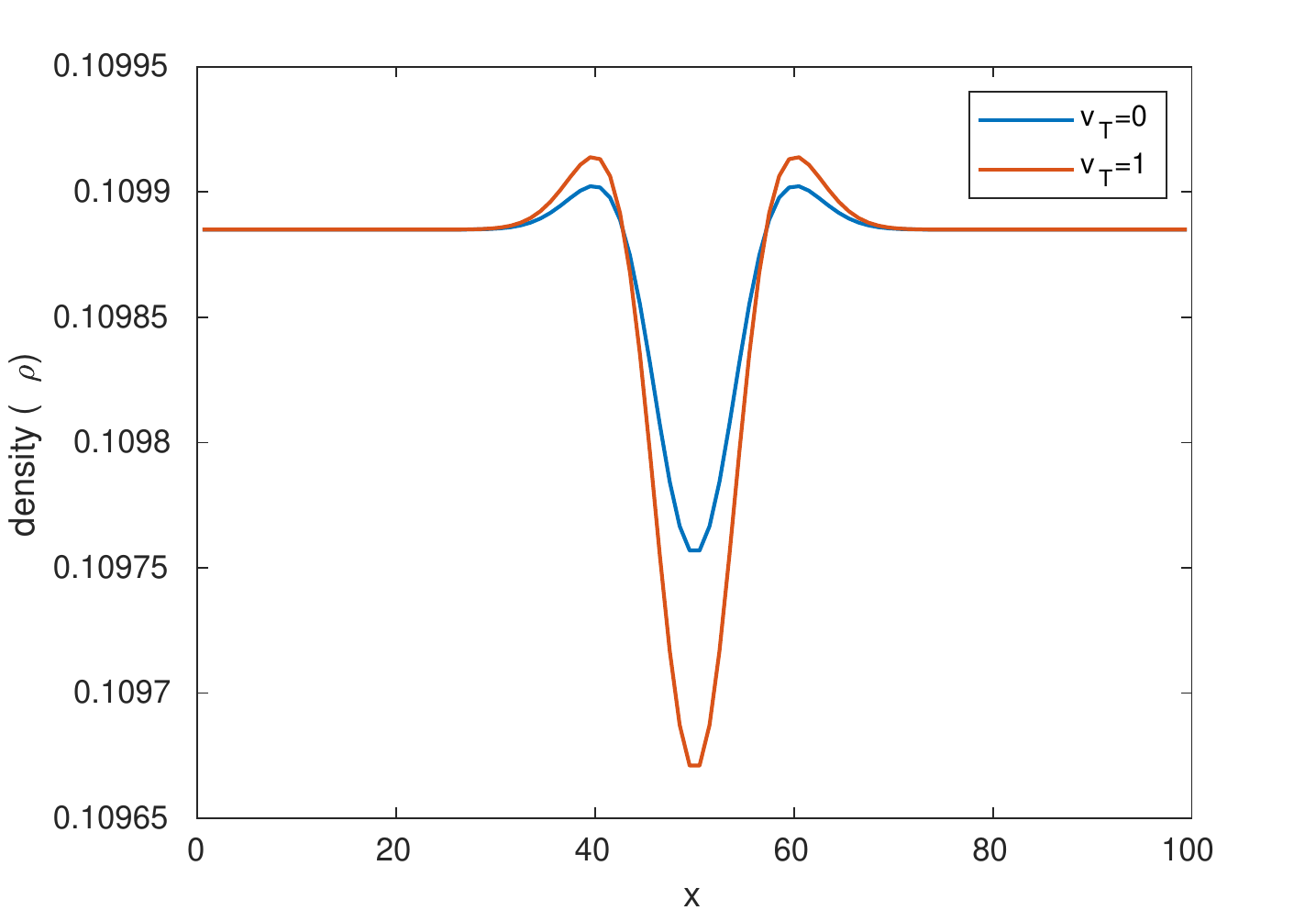}
			\caption{One dimensional cut of the computed solution (density) along the diagonal x+y=4.}
			\label{pb7c}
		\end{subfigure}
		\caption{Test Problem 11 (Realistic simulation in two dimensions): Plot of density approximated using ESDG-O2 scheme at time $t=0.5$ with $100\times100$ mesh.}
		\label{fig:pb7k1}
	\end{figure}
	\begin{figure}[htb!]
		\centering
		\begin{subfigure}[b]{0.45\textwidth}
			\includegraphics[width=\textwidth]{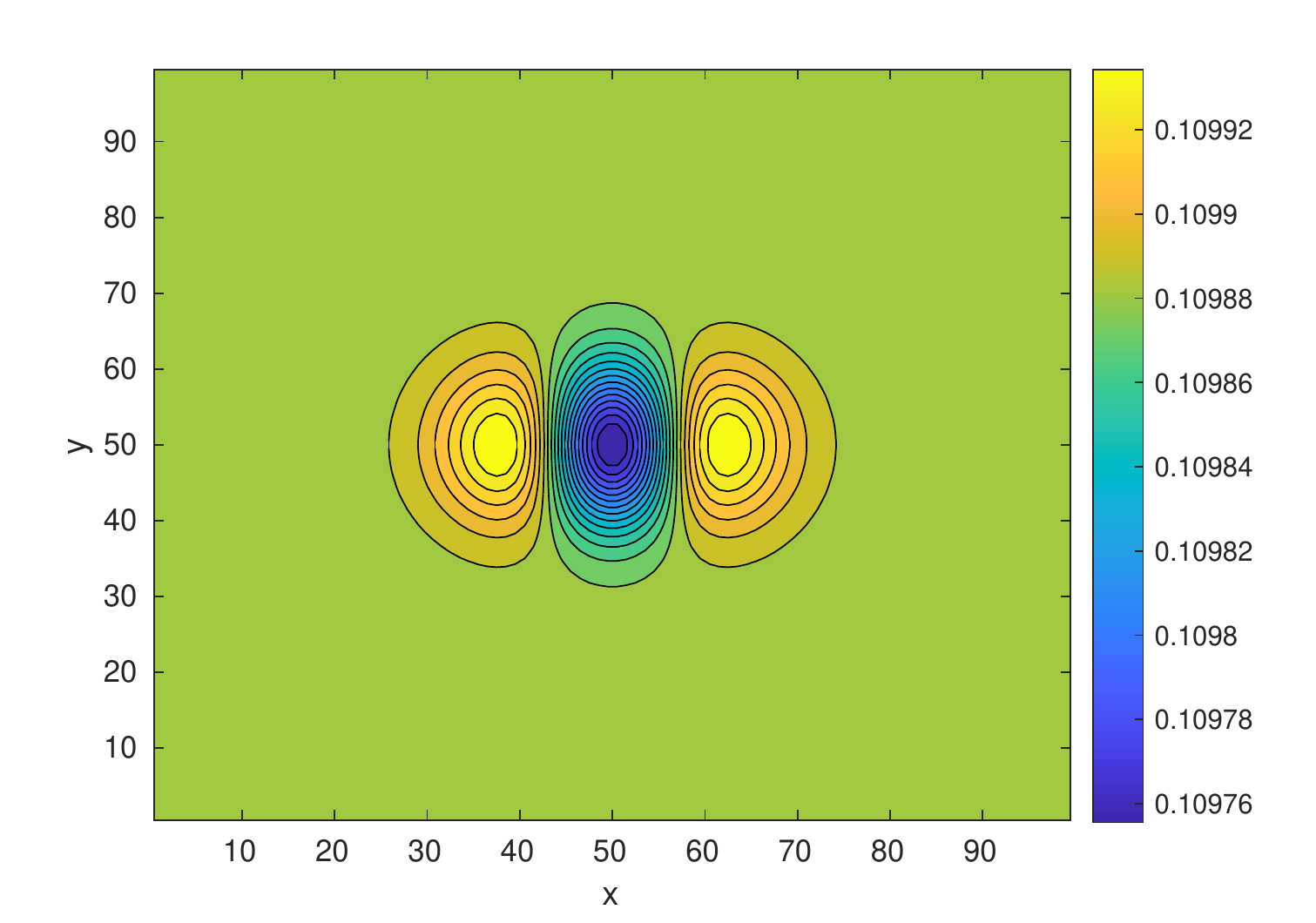}
			\caption{$v_T=0$}
			\label{pb7k2a}
		\end{subfigure}	
		\begin{subfigure}[b]{0.45\textwidth}
			\includegraphics[width=\textwidth]{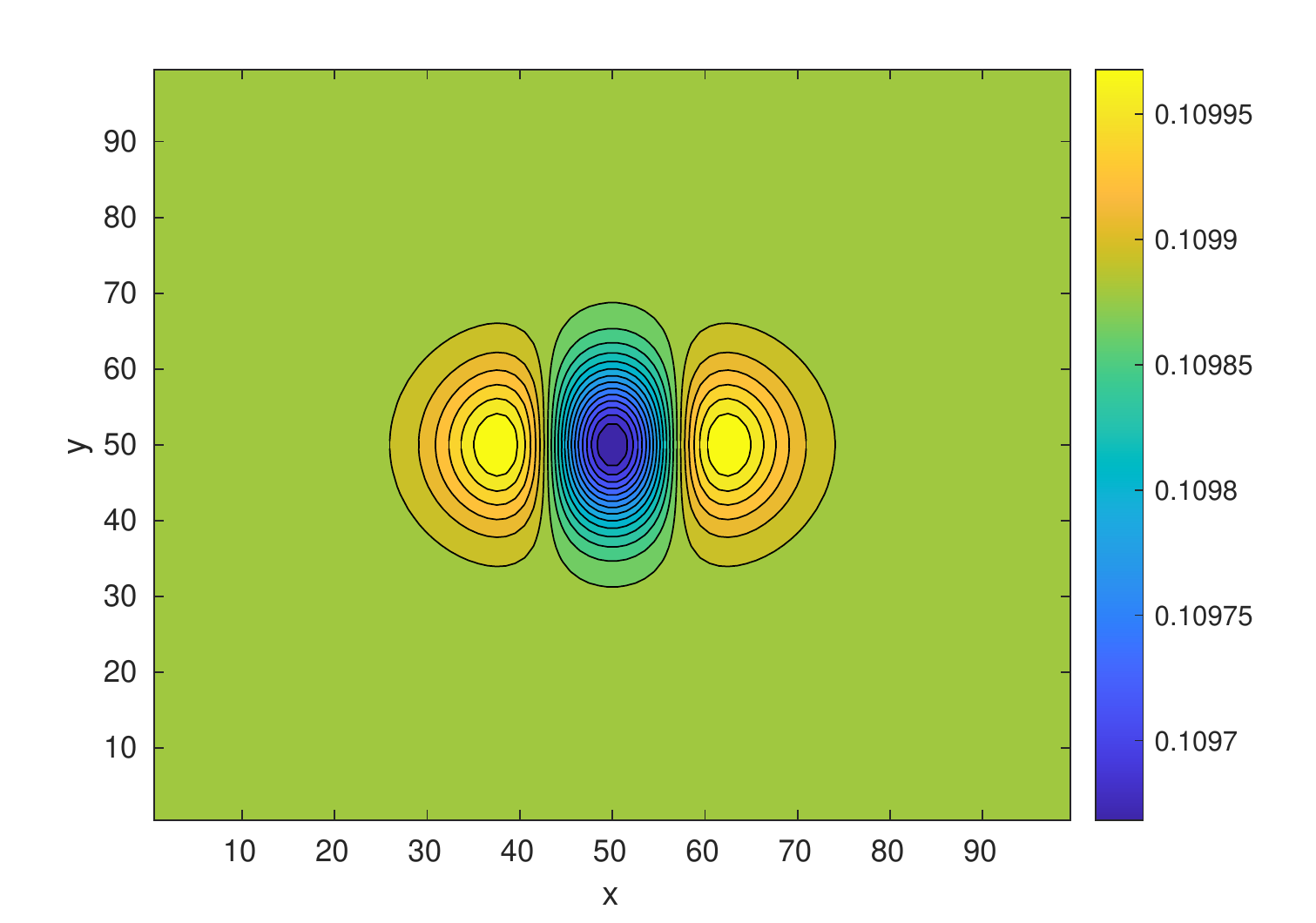}
			\caption{$v_T=1$}
			\label{pb7k2b}
		\end{subfigure}	
		\begin{subfigure}[b]{0.45\textwidth}
			\includegraphics[width=\textwidth]{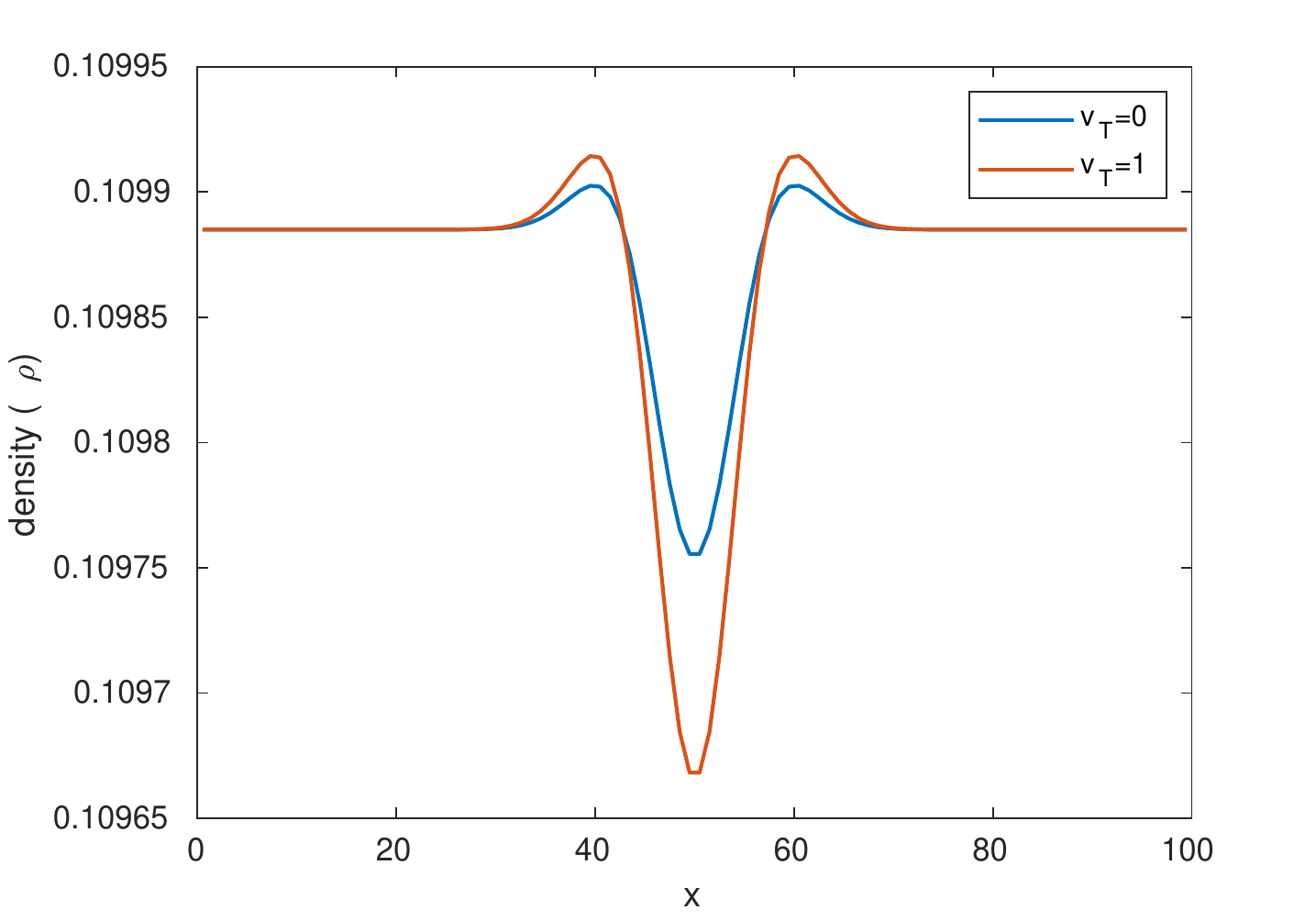}
			\caption{One dimensional cut of the computed solution (density) along the diagonal x+y=4.}
			\label{pb7k2c}
		\end{subfigure}
		\caption{Test Problem 11 (Realistic simulation in two dimensions): Plot of density approximated using ESDG-O3 scheme at time $t=0.5$ with $100\times100$ mesh.}
		\label{fig:pb7k2}
	\end{figure}
\end{example}

\section*{Conclusion}
In this article, we have considered the ten-moment equations. We first present the entropy framework for the system.  We then design the entropy stable discontinuous Galerkin scheme for the system in one and two dimensions. Two achieve the entropy stability; we use entropy conservative numerical flux in cells and entropy stable numerical flux at the cell interfaces. This is achieved by following the quadrature rules in ]\cite{chen2017entropy}. The resulting schemes are shown to be entropy stable at the semi-discrete level, with source terms. For the time discretization, we have used SSP Runge Kutta methods. These schemes are then tested on a variety of test cases in one and two dimensions. Furthermore, schemes are demonstrated to be accurate in capturing various waves and entropy stable, in both one and two dimensions.
\section*{Acknowledgment} Harish Kumar has been funded in part by SERB, DST MATRICS  grant with file No. MTR/2019/000380. 
\bibliographystyle{acm}
\bibliography{main}

\begin{appendices}
	\section{Entropy conservative flux}\label{ecfluxes}
	We denote,
	\begin{equation*}
	D=\dfrac{det(\textbf{p})}{\rho},\, \beta_{xx}=\dfrac{p^{xx}}{D},\,
	\beta_{xy}=\dfrac{p^{xy}}{D},\,\beta_{yy}=\dfrac{p^{yy}}{D},\,D_{\beta}=\beta_{xx}\beta_{yy}-\beta_{xy}^2.	\end{equation*}
	Then the entropy conservative fluxes
	$\mathbf{f}^*=[f^*_1,\,f^*_2,\,f^*_3,\,f^*_4,\,f^*_5,\,f^*_6]^T$ and $\mathbf{g}^*=[g^*_1,\,g^*_2,\,g^*_3,\,g^*_4,\,g^*_5,\,g^*_6]^T$ are given by (see \cite{sen2018entropy}.),
	\begin{equation*}
	\mathbf{f}^*=
	\begin{pmatrix}
	\rho^{ln} \bar{v}^x\\
	f^*_1\bar{v}^x+\dfrac{\bar{\rho}\bar{\beta}_{xx}}{\bar{\beta}_{xx}\bar{\beta}_{yy}-\left(\bar{\beta}_{xy}\right)^2}\\
	f^*_1\bar{v}^y+\dfrac{\bar{\rho}\bar{\beta}_{xy}}{\bar{\beta}_{xx}\bar{\beta}_{yy}-\left(\bar{\beta}_{xy}\right)^2}\\
	\left(\dfrac{\bar{\beta}_{xx}}{D_\beta^{ln}}-\overline{\left(v^x\right)^2}\right)f^*_1+2\bar{v}^xf^*_2\\
	\left(\dfrac{\bar{\beta}_{xy}}{D_\beta^{ln}}-\overline{v^x v^y}\right)f^*_1+\bar{v}^xf^*_3+\bar{v}^yf^*_2\\
	\left(\dfrac{\bar{\beta}_{yy}}{D_\beta^{ln}}-\overline{\left(v^y\right)^2}\right)f^*_1+2\bar{v}^yf^*_3
	\end{pmatrix}
	\end{equation*}
	\begin{equation*}
	\mathbf{g}^*
	=
	\begin{pmatrix}
	\rho^{ln} \bar{v}^y\\
	g^*_1\bar{v}^x+\dfrac{\bar{\rho}\bar{\beta}_{xy}}{\bar{\beta}_{xx}\bar{\beta}_{yy}-{\left(\bar{\beta}_{xy}\right)^2}}\\
	g^*_1\bar{v}^y+\dfrac{\bar{\rho}\bar{\beta}_{yy}}{\bar{\beta}_{xx}\bar{\beta}_{yy}-{\left(\bar{\beta}_{xy}\right)^2}}\\
	\left(\dfrac{\bar{\beta}_{xx}}{D_\beta^{ln}}-\overline{\left(v^x\right)^2}\right)g^*_1+2\bar{v}^xg^*_2\\
	\left(\dfrac{\bar{\beta}_{xy}}{D_\beta^{ln}}-\overline{v^x v^y}\right)g^*_1+\bar{v}^xg^*_3+\bar{v}^yg^*_2\\
	\left(\dfrac{\bar{\beta}_{yy}}{D_\beta^{ln}}-\overline{\left(v^y\right)^2}\right)g^*_1+2\bar{v}^yg^*_3
	\end{pmatrix}
	\end{equation*}

\end{appendices}
\end{document}